\newtheorem{thm}{Theorem}[section]
\newtheorem{cor}[thm]{Corollary}
\newtheorem{lem}[thm]{Lemma}
\newtheorem{prp}[thm]{Proposition}
\newtheorem{Remark}[thm]{Remark}
\theoremstyle{definition}
\newcommand{\scr}[1]{\mathscr #1}
\definecolor{wco}{rgb}{0.5,0.2,0.3}
\numberwithin{equation}{section}
\newcommand{\ua}{\uparrow}
\title{{\bf Stochastic Heat Equations for infinite strings  with Values in a  Manifold}\footnote{Supported in
 part by  NSFC (11371099, 11501361, 11671035, 11771037). Financial support by the DFG through
the CRC 1283 " Taming uncertainty and profiting from randomness and low regularity in analysis, stochastics
and their applications" is acknowledged.}}
\author{
{\bf    Xin Chen$^{a)}$, Bo Wu$^{b)}$, Rongchan Zhu$^{c,e)}$, Xiangchan Zhu$^{d,e,f)}$}
\thanks{E-mail address: chenxin217@sjtu.edu.cn (X. Chen), wubo@fudan.edu.cn(B.Wu),
zhurongchan@126.com(R.C.Zhu), zhuxiangchan@126.com(X.C.Zhu)}
\\
\footnotesize {$^{a)}$ Department of Mathematics, Shanghai Jiaotong University, Shanghai 200240, China}\\
\footnotesize {$^{b)}$ School  of Mathematical Sciences, Fudan
University, Shanghai 200433, China}\\
 \footnotesize{ $^{c)}$Department of Mathematics, Beijing Institute of Technology, Beijing 100081,  China}\\
\footnotesize{ $^{d)}$School of Science, Beijing Jiaotong University, Beijing 100044, China}
\\
\footnotesize{  $^{e)}$ Department of Mathematics, University of Bielefeld, D-33615 Bielefeld, Germany}\\
\footnotesize{ $^{f)}$ Academy of Mathematics and Systems Science,
Chinese Academy of Sciences, Beijing 100190, China}
}
\date{}
\begin{document}
\maketitle

\def\paral{/\kern-0.55ex/}
\def\parals_#1{/\kern-0.55ex/_{\!#1}}
\def\R{\mathbb R} \def\EE{\mathbb E} \def\P{\mathbb P}\def\Z{\mathbb Z} \def\ff{\frac} \def\ss{\sqrt}
\def\H{\mathbb H}
\def\HH{\mathbf{H}}
\def\DD{\Delta} \def\vv{\varepsilon} \def\rr{\rho}
\def\<{\langle} \def\>{\rangle} \def\GG{\Gamma} \def\gg{\gamma}
\def\ll{\lambda} \def\LL{\Lambda} \def\nn{\nabla} \def\pp{\partial}
\def\dd{\text{\rm{d}}}
\def\Id{\text{\rm{Id}}}\def\loc{\text{\rm{loc}}} \def\bb{\beta} \def\aa{\alpha} \def\D{\scr D}
\def\P{\mathbb{P}}
\def\E{\scr E} \def\si{\sigma} \def\ess{\text{\rm{ess}}}
\def\beg{\begin} \def\beq{\beg}  \def\F{\scr F}
\def\Ric{\text{{\bf Ric}}}
\def\Vol{\text{{\bf Vol}}}
\def\Var{\text{{\bf Var}}}
\def\Ent{\text{{\bf Ent}}}
\def\Scal{\text{{\bf Scal}}}
\def\Hess{\text{\rm{Hess}}}\def\B{\scr B}
\def\e{\text{\rm{e}}} \def\ua{\underline a} \def\OO{\Omega} \def\b{\mathbf b}
\def\oo{\omega}     \def\tt{\tilde} \def\Ric{\text{\rm{Ric}}}
\def\cut{\text{\rm{cut}}} \def\P{\mathbb P} \def\K{\mathbb K}
\def\ifn{I_n(f^{\bigotimes n})}
\def\fff{f(x_1)\dots f(x_n)} \def\ifm{I_m(g^{\bigotimes m})} \def\ee{\varepsilon}
\def\C{\scr C}
\def\PP{\scr P}
\def\M{\scr M}\def\ll{\lambda}
\def\X{\scr X}
\def\T{\scr T}
\def\A{\mathbf A}
\def\LL{\scr L}\def\LLL{\Lambda}
\def\gap{\mathbf{gap}}
\def\div{\text{\rm div}}
\def\Lip{\text{\rm Lip}}
\def\dist{\text{\rm dist}}
\def\cut{\text{\rm cut}}
\def\supp{\text{\rm supp}}
\def\Cov{\text{\rm Cov}}
\def\Dom{\text{\rm Dom}}
\def\Cap{\text{\rm Cap}}\def\II{{\mathbb I}}\def\beq{\beg{equation}}
\def\sect{\text{\rm sect}}\def\H{\mathbb H}

\begin{abstract}In the paper, we construct conservative Markov processes corresponding to the martingale solutions to the stochastic heat equation on $\mathbb{R}^+$ or $\mathbb{R}$ with values in  a general Riemannian maifold, which is only assumed to be complete and stochastic complete. This work is an extension of the previous paper \cite{RWZZ17} on finite volume case.

Moveover, we also obtain some functional inequalities associated to these Markov processes. This implies that on infinite volume case, the exponential ergodicity of the solution if the Ricci curvature is strictly positive and the  non-ergodicity of the process if the sectional curvature is negative.
\end{abstract}

\noindent \textbf{Keywords}: Stochastic heat equation; Ricci Curvature; Functional inequality; Quasi-regular Dirichlet form; Infinite volume
\vskip 2cm

\section{Introduction}\label{sect1}

This work is the continuity of \cite{RWZZ17}, which is motivated by Tadahisa Funaki's pioneering work \cite{Fun92} and Martin Hairer's recent work \cite{Hai16}. Let $M$ be a $n$-dimensional compact Riemannian manifold. In  \cite{Hai16} Hairer considered the stochastic heat equation 
associated to the energy
$$E(u)=\frac{1}{2}\int_{S^1} g_{u(x)}(\partial_xu(x),\partial_xu(x))\dd x,$$
for smooth functions $u:S^1\rightarrow M$, and wrote the equation in the local coordinates formally:
\begin{equation}\label{eq1.1}\partial_t{u}^\alpha=\partial_x^2u^\alpha+\Gamma_{\beta\gamma}^\alpha(u)\partial_xu^\beta\partial_xu^\gamma+\sigma_i^\alpha(u)\xi_i,\end{equation}
where Einstein¡¯s convention of summation over repeated indices is implied and
$\Gamma_{\beta\gamma}^\alpha$ are the Christoffel symbols for the Levi-Civita connection of $(M, g)$, $\sigma_i$ are vector fields on $M$ and $\xi_i$ are independent space-time white noise. This equation may be also looked as certain kind of multi-component version of the KPZ equation.
By the theory of regularity structure recently developed in \cite{Hai14, BHZ16, CH16}, local well-posedness of (1.1) has been obtained in \cite{Hai16} (see more recent work \cite{BGHZ19}).

 By the Andersson-Driver's approximation of Wiener measure in \cite{AD99}, we know that there exists an explicit relation between the Langevin energy $E(u)$ and Wiener (Brownian bridge) measure. In particular, it has been obtained in \cite{AD99} that
 Wiener
(Brownian bridge) measure $\mu$ on $C([0,1];M)$ could been interpreted as the limit of a natural approximation of the  measure
$\exp(-E(u))\D u$, where $\D u$ denotes a `Lebesgue' like measure on path space.
Based on the above connection, one may think the solution to the stochastic heat equation (1.1) may have $\mu$ as an invariant (even symmetrizing) measure.

In \cite{RWZZ17}, starting from the Wiener measure (or Brownian bridge measure) $\mu$ on $C([0,1],M)$ we use the theory of Dirichlet forms to construct a natural evolution which admits $\mu$ as an invariant measure. Moreover, the relation between the evolution constructed in \cite{RWZZ17} and (1.1) has also been discussed in \cite{RWZZ17} by using the Andersson-Driver approximation. It is conjectured in \cite{RWZZ17} that the Markov processes constructed by Dirichlet form in \cite{RWZZ17} have the same law as the solution to \eqref{eq1.1}.
Since we consider the Wiener measure on $C([0,1],M)$ in \cite{RWZZ17}, the evolution corresponds to the stochastic heat equation on $[0,1]$ for different boundary conditions  with values in a compact
Riemannian manifold. In the paper, we extend the results in \cite{RWZZ17} from finite volume  $[0,1]$  to the half line $\mathbb{R}^+$ or the real line $\mathbb{R}$.

When $M=\mathbb{R}^n$ it is well-known that the law of Brownian motion on $C([0,\infty);\mathbb{R}^n)$ is an invariant measure of the following stochastic heat equatioin
$$\partial_tX=\frac{1}{2}\partial_x^2X+\xi,\quad  X(t,0)=0, $$
on $[0,\infty)\times [0,\infty)$. Here  $\xi$ is  space-time white noise. By similar calculation as that in \cite{FQ} we easily know that the distribution of a two-sided Brownian motion with a  shift given by Lebesgue measure is invariant under the following stochastic heat equation
$$\partial_tX=\frac{1}{2}\partial_x^2X+\xi, $$
on $[0,\infty)\times \mathbb{R}$. This suggests us to use  the law of Brownian motion on $C([0,\infty);M)$ or the law of two sided Brownian motion on $C(\mathbb{R};M)$ to construct the corresponding stochastic heat equation on $\mathbb{R}^+$ or $\mathbb{R}$ with values in a Riemannian manifold.

Similarly as in \cite{RWZZ17}, we construct the solution to stochastic heat equation  by using the following $L^2$-Dirichlet form  with the  reference measure $\mu=$ the law of Brownian motion on $M$/the law of two sided Brownian motion on $M$:
$$\E(F,G):=\frac{1}{2}\int \langle DF, DG\rangle_{\mathbf{H}}\dd\mu=\frac{1}{2}\sum_{k=1}^\infty\int D_{h_k}F D_{h_k}G  \dd\mu; \quad F,G\in\F C_b,$$
where $\{h_k\}_{k\geq1}$ is an orthonormal basis in ${\mathbf{H}}:=L^2(\mathbb{R}^+;\mathbb{R}^d)/L^2(\mathbb{R};\mathbb{R}^d)$, and  $\F C_b$ and $DF$ are the set of all cylinder functions and $L^2$-gradient respectively(refer to the definitions in Section 2). In this case, the associated Dirichlet-Form $\E$ is called \textbf{{$\mathbf{L^2}$}-Dirichlet form}.

\textbf{For the half line case}:
we consider the reference measure as the law of Brownian motion for the half line $\R^+$ on Riemannian path space $C([0,\infty);M)$
and choose the state space as some weighted $L^2$-space (see Section 2). By using a general integration by parts formula from \cite{CLW17} (see also appendix) we can construct a martingale solution to the stochastic heat equation with values in a general Riemannian manifold, which is complete and stochastic complete.

\textbf{For the whole line case}: we first construct the two sided Brownian motion $\hat{x}$ on $M$ with $\hat{x}(0)=o$ by an independent copy of Brownian motion on $M$. We consider the reference measure given by the law $\mu^o_\R$ of $\hat{x}$.  By this we   derive an integration by parts formula by using the  stochastic horizontal lift for independent copy (see Proposition \ref{p3.1} 
for the reason we choose it in this way). We also emphasize that the $L^2$-Dirichlet form is independent of the stochastic horizontal lift (see Remark \ref{r2-1}), which can be seen as  a tool to obtain the integration by parts formula and the closablity of the associated bilinear form (see Remark 3.1).
Moreover, we also consider the reference measure as $\mu_\R^\nu:=\int \mu^x_\R \nu(\dd x)$ with some Randon measure $\nu$ satisfying \eqref{l4-1-1}, which could be the volume measure on $M$ under 
some mild curvature condition (see Remark \ref{r4-1} below).
As mentioned before, the process corresponds to the stochastic heat equation on $\mathbb{R}$ without any boundary condition for $\nu$  given by the volume measure on $M$.
Here we mainly concentrate on the more complicated case that $\nu$ and the reference measure have infinite mass. We use a cut-off technique to find suitable test functions and prove the quasi-regularity of the $L^2$-Dirichlet form (see Theorem \ref{t 4-1}), which gives a Markov process as a martingale solution to  the stochastic heat equation on $[0,\infty)\times \mathbb{R}$ with values in a Riemannian manifold. It is not easy to obtain that the process is conservative in this case, since $1$ is not in the domain of the Dirichlet form. Under mild curvature condition we find suitable approximation functions in the domain of the $L^2$-Dirichlet form and obtain that the Markov process is conservative in the sense that the life time is infinity (see Theorem \ref{t4-2a}).

We also emphasize that the construction of the conservative Markov processes on general manifold with reference measure having infinite mass still holds for the finite volume case, especially for the free loop case with the reference measure $e^{c\int_0^1\mathrm{Scal}(\gamma(s))\dd s}\tilde \mu^\nu(\gamma)$, which is  conjectured  to be invariant measure for \eqref{eq1.1} in \cite{BGHZ19}.  Here $c\in \mathbb{R}, \tilde \mu^\nu:= \int \tilde \mu^x\nu(\dd x)$, with $\nu=p_1(x,x)\dd x$ and $\tilde \mu^x$ given by  the Brownian bridge
measure and $p_t$ is the heat kernel for $\frac{1}{2}\Delta$ and $\mathrm{Scal}$ denotes the scalar curvature.  For more details we refer to Remark \ref{r4-3} and Remark \ref{loop}.



In the final part of this paper, we  study functional inequalities associated to $L^2$-Dirichlet-Form, which implies the long time behavior  of the solutions to the stochastic heat equations for infinite string.  
In this case, the $L^2$-Dirichlet form  is not comparable with the O-U Dirichlet form  constructed in \cite{DR92}, we refer
 readers to \cite{A96,AE95,ALR93,AD99,CHL97,CLW11,CLW17,CW14,DR92,Dri92,Dri94,ELL97,FM,FW05,FW17,L04,MR00,S00,W99,W04,WW08,
 WW09,WW16,W1}
 and references therein for various study about O-U Dirichlet form on path and loop space.

 As we explained before, this case corresponds to SPDEs on infinite volume. The ergodicity property is different from
 that for the finite volume case (see \cite{RWZZ17}).  For different manifolds we have ergodicity or non-ergodicity for the associated Markov processes.
We establish the log-Sobolev inequality for the corresponding $L^2$-Dirichlet form if $\Ric\geq K>0$ for some constant $K$ and Poincar\'{e} inequality for compact Riemannian manifold
  with some suitable curvature condition (see Theorem 4.1),
  which implies the $L^2$-exponential ergodicity in this case; When $M=\mathbb{R}^n$, ergodicity still holds but the Poincar\'{e} inequality does not hold in this case (see Theorem 4.3); 
  When $M$ is not a Liouville manifold, the associated Dirichlet form $\E$ is reducible, which means that  the solutions to the stochastic heat equation are not ergodic.

  \textbf{Notations}: In this paper we use $C_c^m$ to denote $C^m$-differentiable functions  with compact support. We use $C_b^m$ to denote $C^m$-differentiable functions with bounded derivatives. For Hilbert space $H$ we also use $|\cdot|_H$ to denote the norm of it.

The rest of this paper is as follow:
In Section $2$, we will construct the stochastic heat equation for the half line case on general Riemannian manifold $M$. The stochastic heat equation for the whole line will be established in Section $3$, and the ergodicity or non-ergodicity property of the processes will be obtained in Section 4.

\section{The case of half line $\R^+$}\label{sect2}
Throughout the article, suppose that $M$ is a complete and stochastic complete Riemannian manifold with dimension $n$, and $\rho$ is the Riemannian distance on $M$.
In this section, we will construct the stochastic heat process on half line. We first introduce some notions.
Fix $o\in M$ ,
the  path space over $M$ is defined by
$$W_{\mathbb{R}^+}^{o}(M):=\{\gamma\in C([0,\infty);M):\gamma(0)=o\}.$$
Then $W^o_{\mathbb{R}^+}(M)$ is a Polish (separable metric)
space under the following uniform distance
$$d_\infty(\gamma,\sigma):=\sum_{k=1}^\infty
\frac{1}{2^k}\displaystyle\sup_{t\in [0,k]}\Big(\rho(\gamma(t),\sigma(t))\wedge 1\Big),\quad\gamma,\sigma\in W^o_{\mathbb{R}^+}(M).$$

In order to construct Dirichlet forms associated to stochastic heat equations for infinite strings
 on Riemannian path space,
we also define the following weighted $L^1$-distance:
\begin{equation}\label{eq 2.12-1}
\tilde{d}(\gamma,\eta):={\sum_{k=1}^{\infty}\frac{1}{2^k}\int_{k-1}^k\tilde{\rho}(\gamma(s),\eta(s))\dd s}, \quad \gamma,\eta\in W_{\mathbb{R}^+}^{o}(M),
\end{equation}
where $\tilde{\rho}=\rho\wedge1$. Obviously we have $\tilde d \le d_{\infty}$.
Let ${\bf E}^o_{\R^+}(M)$ be the closure of  $W^o_{\R^+}(M)$
with respect to the distance $\tilde{d}$, then ${\bf E}^o_{\R^+}(M)$ is a Polish space.

Let $O(M)$ be the orthonormal frame bundle over $M$, we
consider the following SDE,
\begin{equation}\label{eq2.1}
\begin{cases}
&\dd U_t=\sum_{i=1}^n H_i(U_t)\circ \dd W_t^i,\quad t\geq0\\
& U_0=u_o,
\end{cases}
\end{equation}
where $\{H_i\}_{i=1}^n$ is a canonical orthonormal basis of horizontal vector fields
$O(M)$, $u_o$ is a fixed orthonormal basis of $T_o M$ and $(W_t^i)_{t\ge 0}$, $1\le i \le n$ is a standard
$\mathbb{R}^n$-valued Brownian motion defined on a probability space $(\Omega, \mathscr{F},\mathbb{P})$.
Note that $M$ is stochastically complete, so $U_t$ is well defined for all $t\ge 0$.
Let $\pi: O(M) \rightarrow M$ denote the canonical projection, then $x_t:=\pi(U_t),\ t\geq0$ is the Brownian motion on
$M$ with initial point $o$, and $U_{\cdot}$ is the (stochastic) horizontal lift along
$x_{\cdot}$. Let $\mu^o_{\R^+}$ be the law of $x_{[0,\infty)}$, then $\mu^o_{\R^+}$
is a probability measure on $W^{o}_{\R^+}(M)$, and  the (stochastic) horizontal lift
$(U_t(\gamma))_{t\in [0,\infty)}$ is well defined for $\mu^o_{\R^+}$-a.s. $\gamma \in W^o_{\R^+}(M)$, (whose distribution
is the same as that of $(U_t)_{t\in [0,\infty)}$ under $\mathbb{P}$). Therefore $\mu^o_{\R^+}$ can be seen as  a probability measure
on ${\bf E}^o_{\R^+}(M)$ with support contained in $W^o_{\R^+}(M)$, and
$(U_t(\gamma))_{t\in [0,\infty)}$ is also well defined for $\mu^o_{\R^+}$-a.s. $\gamma \in {\bf E}^o_{\R^+}(M)$.

Let $\F C_b^1$ be the space of
$C_b^1$ cylinder functions on ${\bf E}^o_{\R^+}(M)$ defined as follows: for every $F\in \F C_b^1$, there exist some $m\geq1, ~m\in \mathbb{N}^+,~ f\in C_b^{1}(\mathbb{R}^m), g_j\in C_b^{0,1}([0,\infty)\times M)$,
$T_j\in[0,\infty)$, $j=1,...,m$,
such that
\begin{equation}\label{eq 3.1}\aligned
F(\gamma)=f\left(\int_0^{T_1}  g_1(s,\gamma(s)) \dd s,\int_0^{T_2}   g_2(s,\gamma(s)) \dd s,...,\int_0^{T_m}   g_m(s,\gamma(s)) \dd s\right),\quad \gamma\in {\bf E}^o_{\R^+}(M).\endaligned\end{equation}
 Here $C_b^{0,1}([0,\infty)\times M)$ denotes the bounded functions which are continuous w.r.t. the first variable and $C^1_b$-
differentiable w.r.t. the second variable.
It is easy to see that $F$ is well defined for  $\gamma \in {\bf E}^o_{\R^+}(M)$,
$\F C_b^1$ is dense in $L^2({\bf E}^o_{\R^+}(M); \mu^o_{\R^+})=L^2(W^o_{\R^+}(M); \mu^o_{\R^+})$.
For any $F\in \F C_b^1$ of the form \eqref{eq 3.1} and $h\in \HH_+:=L^2([0,\infty)\rightarrow \R^n;\dd s)=\{h: [0,\infty)\rightarrow \R^n;
\int_0^{\infty}|h(s)|^2 \dd s<\infty\}$, the directional derivative of $F$ with respect to $h$ is ($\mu^o_{\R^+}$-a.s.) defined by
$$D_hF(\gamma)=\sum_{j=1}^m\hat{\partial}_jf(\gamma)\int_0^{T_j}
 \left\langle U_s^{-1}(\gamma)\nabla g_j(s,\gamma(s)),h(s)
 \right\rangle \dd s,\quad \gamma\in{\bf E}^o_{\R^+}(M),$$
where
$$\hat{\partial}_jf(\gamma):=\partial_jf\bigg(\int_0^{T_1}  g_1(s,\gamma(s)) \dd s,\int_0^{T_2}  g_2(s,\gamma(s)) \dd s,...,\int_0^{T_m} g_m(s,\gamma(s)) \dd s\bigg).$$
and $\nabla g_j$ denotes the gradient w.r.t. the second variable.
By the Riesz representation theorem,
there exists a gradient operator $DF(\gamma)\in \HH_+$ such that $\langle DF(\gamma),h\rangle_{\HH_+}=D_hF(\gamma)$, $\mu^o_{\R^+}$-a.s.$\gamma\in {\bf E}^o_{\R^+},
h\in \HH_+$. In particular, for $\gamma\in W^o_{\R^+}(M)$,
\begin{equation}\label{eq2.3}\aligned DF(\gamma)(s)=\sum_{j=1}^m\hat{\partial}_jf(\gamma)U_s^{-1}(\gamma)\nabla g_j(s,\gamma(s))1_{(0,T_j] }(s).\endaligned\end{equation}

 We define the (Cameron-Martin) subspace $\H^\infty_+$ of $\HH_+$ as follows
\begin{equation}\label{eq2.3-1}
\H^\infty_+:=\left\{h\in C_c^1([0,\infty);\mathbb{R}^d)\Big| h(0)=0,\int^\infty_0|h'(s)|^{2}\dd s<\infty\right\}.
\end{equation}
Fix a sequence of elements $\{h_k\}_{k=1}^{\infty}\subset \H^\infty_+$
such that it is an orthonormal basis in $\HH_+$, we define the following symmetric quadratic form
as follows
\begin{equation}\label{DF}\E^o_{\R^+}(F,G):=\frac{1}{2}\int_{{\bf E}^o_{\R^+}(M)}
\langle DF, DG\rangle_{\HH_+}\dd\mu^o_{\R^+}, \quad F,G\in\F C_b^1.\end{equation}
Then it is obvious that
$$\E^o_{\R^+}(F,G)=\frac{1}{2}\sum_{k=1}^\infty\int_{{\bf E}^o_{\R^+}(M)} D_{h_k}F D_{h_k}G  \dd\mu^o_{\R^+}; \quad F,G\in\F C_b^1.$$

\begin{Remark}\label{r2-1}
Although the stochastic horizontal lift $(U_t(\gamma))_{t\in [0,\infty)}$ is applied
in the definition of $(\E^o_{\R^+}, \F C_b^1)$, the value of $\E^o_{\R^+}(F,F)$ is independent of $(U_t(\gamma))_{t\in [0,\infty)}$.
In particular, by the definition \eqref{eq2.3} of the gradient, we have
$$\E^o_{\R^+}(F,G)=\frac{1}{2}\int_{{\bf E}^o_{\R^+}(M)}
\sum_{i=1}^m\sum_{j=1}^l\hat{\partial}_if_1(\gamma)\hat{\partial}_jf_2(\gamma)\int^{T_i\wedge T_j}_0\langle \nabla g^1_i(s,\gamma(s)),\nabla g^2_j(s,\gamma(s))\rangle \dd s\dd\mu^o_{\mathbb{R}^+}$$
for any $F,G\in\F C_b^1$ with
$$\aligned
&F(\gamma)=f_1\left(\int_0^{T_1}  g^1_1(s,\gamma(s)) \dd s,\int_0^{T_2}   g^1_2(s,\gamma(s)) \dd s,...,\int_0^{T_m}   g^1_m(s,\gamma(s)) \dd s\right)\\
&G(\gamma)=f_2\left(\int_0^{T_1}  g^2_1(s,\gamma(s)) \dd s,\int_0^{T_2}   g^2_2(s,\gamma(s)) \dd s,...,\int_0^{T_l}   g^2_l(s,\gamma(s)) \dd s\right),\quad \gamma\in {\bf E}^o_{{\R^+}}(M),\endaligned$$
for $f_1\in C_b^{1}(\mathbb{R}^m), f_2\in C_b^{1}(\mathbb{R}^l),  g_j^i\in C_b^{0,1}([0,\infty)\times M)$ $i=1,2, j=1,...,m$.
This implies the quadratic form $\E^o_{\R^+}$ is independent of $(U_t(\gamma))_{t\in [0,\infty)}$.

\end{Remark}

\beg{thm}\label{T2.1}  The quadratic form $(\E^o_{\R^+}, \F C_b^1)$
is closable and its closure $(\E^o_{\R^+},\D(\E^o_{\R^+}))$ is a quasi-regular Dirichlet form on $L^2({\bf E}^o_{\R^+}(M);\mu^o_{\R^+})$.
\end{thm}

By using the theory of Dirichlet form (refer to \cite{MR92}), we obtain the following associated diffusion process.

 \beg{thm}\label{T2.2} There exists a conservative (Markov) diffusion process
 $\mathbf{M}=(\Omega,\F,\M_t,$ $(X(t))_{t\geq0},(\mathbf{P}^z)_{z\in {\bf E}^o_{\R^+}(M)})$ on ${\bf E}^o_{\R^+}(M)$ having $\mu^o_{\R^+}$ as an invariant measure and  \emph{properly associated with} $(\E_{\R^+}^o,\D(\E^o_{\R^+}))$, i.e. for $u\in L^2({\bf E}^o_{\R^+}(M);\mu^o_{\R^+})\cap\B_b({\bf E}^o_{\R^+}(M))$, the transition semigroup $P_tu(z):=\EE^z[u(X(t))]$ is an $\E^o_{\R^+}$-quasi-continuous version of $T_tu$ for all $t >0$, where $T_t$ is the semigroup associated with $(\E^o_{\R^+},\D(\E^o_{\R^+}))$.
\end{thm}

  Here for the notion of $\E^o_{\R^+}$-quasi-continuity we refer to \cite[Definition III-3.2]{MR92}. By Fukushima decomposition we have
\beg{thm}\label{T2.4}  There exists a \emph{properly  $\E^o_{\R^+}$-exceptional set} $S\subset {\bf E}^o_{\R^+}(M)$, i.e.
$\mu_{\R^+}^o(S)=0$ and $\mathbf{P}^z[X(t)\in {\bf E}^o_{\R^+}(M)\setminus S, \forall t\geq0]=1$ for
$z\in {\bf E}^o_{\R^+}(M)\backslash S$, such that $\forall z\in {\bf E}^o_{\R^+}(M)\backslash S$ under $\mathbf{P}^z$,  the sample paths of the associated  process $\mathbf{M}=(\Omega,\F,\M_t,$ $(X(t))_{t\geq0},(\mathbf{P}^z)_{z\in {\bf E}^o_{\R^+}(M)})$ on ${\bf E}^o_{\R^+}(M)$ satisfy the following for
$u\in \D(\E^o_{\R^+})$
 \begin{equation}\label{eq2.4}\aligned u(X_t)-u(X_0)=M_t^u+N_t^u\quad \mathbf{P}^z-a.s.,\endaligned\end{equation}
 where $M^u$ is a martingale with quadratic variation process given by $\int_0^t |Du(X_s)|_{\HH_+}^2ds$ and $N_t$ is zero quadratic variation process. In particular, for $u\in D(L)$, $N_t^u=\int_0^tLu(X_s)ds$, where $L$ is the generator of
 $(\E^o_{\R^+},\D(\E^o_{\R^+}))$.
\end{thm}

\beg{Remark}\label{r2.5}
If we choose $u(\gamma)=\int_{r_1}^{r_2}u^\alpha(\gamma(s))ds\in\F C_b^1$, with $u^\alpha$ is a local coordinate on $M$, then the quadratic variation process for $M^u$ is the same as  that for the martingale part in (1.1).
\end{Remark}

To prove Theorem \ref{T2.1}, the crucial ingredient is the local integration by parts formula in \cite{CLW17}.  To do that, we need to introduce some notations.  In the following,  we first introduce another  cylinder functions set, every element in which only depends on finite times:
\begin{equation*}
\begin{split}
\hat \F C_b:=\Big\{&W^o_{\R^+}(M)\ni\gamma\mapsto f(\gamma(t_1),\cdots,\gamma(t_m)):\ m\geq1,\\
&~~~~~~~~~~~~~~~~~~~~~~~~
0<t_1<t_2\cdots<t_m<\infty,f\in C_{b,Lip}(M^m)\Big\},
\end{split}
\end{equation*}
where $C_{b,Lip}(M^m)$ denotes the collection of bounded Lipschitz continuous functions on $M^m$.

For a fixed $o \in M$, since $M$ is complete,
there exists a $C^\infty$ non-negative smooth function $\hat \rho:M\rightarrow\R$ with the property that $0<|\nabla \hat \rho(z)|\le 1$ and
$$\left|\hat \rho(z)-\frac{1}{2}\rho(o,z)\right|<1,\quad z\in M.$$
For every non-negative $m$, define
 \begin{equation}\label{cut}D_m:=\left\{z \in M: \hat \rho(z)<m\right\},\quad \tau_m(\gamma):=\inf\left\{s \ge 0:\ \gamma(s) \notin D_m\right\}.\end{equation}

We first introduce the following two results in \cite{CLW17} and \cite{CLW18}, for convenience of readers we will give
the proof of them in the Appendix

\begin{lem}\label{l2.5}{\bf [Chen-Li-Wu \cite{CLW17}]}
For any $m \in \mathbb{N}^+$ and $T \in \R^+$, there exists a stochastic process(vector fields)
$l_{m,T}: [0,\infty)\times W^o_{\R^+}(M) \rightarrow [0,1]$  such that

\begin{enumerate}

\item [(1)]  $l_{m,T}(t,\gamma)=\left\{ \begin{array}{ll} 1, \qquad &t < \tau_{m-1}(\gamma)\wedge T\\
0, &t > \tau_{m}(\gamma)\end{array}\right..$

\item[(2)]  Given any $o \in D_m$,
$l_{m,T}(t,\gamma)$ is $\mathscr{F}_t^{\gamma}:=\sigma\{\gamma(s);s \in [0,t]\}$-adapted and
$l_{m,T}(\cdot,\gamma)$ is absolutely continuous for $\mu^o_{\R^+}$-a.s. $\gamma \in W^o_{\R^+}(M)$.

\item[(3)] For any positive integers $k,p,m \in \mathbb{Z}_+$ and $t\in \R^+$, we have
\begin{equation}\label{eq2.5}
\sup_{o \in D_m}\int_{W_{\R^+}^o(M)} \int^t_0|l_{k,T}'(s, \gamma)|^p \dd s \mu_{\R^+}^o(\dd\gamma)\le C_1(m,k,p,T)
\end{equation}
for some positive constant $C_1(m,k,p,T)$ (which may depends on $m$, $T$, $p$ and $k$).

\end{enumerate}

\end{lem}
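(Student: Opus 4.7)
The plan is to realise $l_{m,T}(t,\gamma)$ as a composition $\phi\bigl(A_t(\gamma)\bigr)$, where $\phi:\R\to[0,1]$ is a fixed smooth decreasing cutoff ($\phi(r)=1$ for $r\le 0$, $\phi(r)=0$ for $r\ge 1$) and $A_t(\gamma)$ is an adapted, absolutely continuous path-functional that vanishes for $t\le \tau_{m-1}(\gamma)\wedge T$ and satisfies $A_t(\gamma)\to+\infty$ as $t\uparrow\tau_m(\gamma)$. A natural candidate is
$$A_t(\gamma):=\int_0^{t\wedge\tau_m(\gamma)}\psi(\gamma(s))\,\dd s+\epsilon^{-1}(t-T)^+,$$
with $\psi:M\to[0,\infty)$ smooth, vanishing on $\bar D_{m-1}$, and carrying a singularity of the form $\psi(z)\asymp(m-\hat\rho(z))^{-\beta}$ as $z\to\partial D_m$, whose exponent $\beta$ will be fixed below.

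Granted such an $A_t$, properties (1) and (2) follow quickly. For $t<\tau_{m-1}(\gamma)\wedge T$ we have $\psi\circ\gamma\equiv 0$ and $s\le T$ on $[0,t]$, so $A_t=0$ and $l_{m,T}=\phi(0)=1$; for $t>\tau_m(\gamma)$ the divergence of $A_{\tau_m(\gamma)}$, which is a consequence of the choice of $\beta$ together with standard occupation-time estimates for Brownian motion approaching a smooth level set of $\hat\rho$, forces $l_{m,T}=0$. Adaptedness to $\mathscr F_t^\gamma$ is immediate from the integral form of $A_t$. Absolute continuity of $t\mapsto l_{m,T}(t,\gamma)$ on $[0,\tau_m(\gamma)]$ follows from the chain rule and the substitution $u=A_s$, which reduces the total variation to $\int_0^\infty|\phi'(u)|\,\dd u=\|\phi'\|_{L^1(\R)}<\infty$, and continuation by $0$ past $\tau_m$ preserves AC.

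The main obstacle is the quantitative bound (2.5). By the chain rule
$$|l'_{m,T}(s,\gamma)|\le|\phi'(A_s(\gamma))|\,\bigl(\psi(\gamma(s))\,\mathbf{1}_{s<\tau_m(\gamma)}+\epsilon^{-1}\mathbf{1}_{s>T}\bigr),$$
with $\phi'\circ A_s$ supported on $\{A_s\le 1\}$. On that event the path has not yet accumulated enough singular mass for the integral of the weight to reach $1$, which together with the continuity of $\gamma$ effectively confines $\gamma(s)$ (in a time-averaged sense) away from $\partial D_m$, so that the $L^p$-norm of $\psi\circ\gamma$ restricted there is controlled by a constant depending only on $(p,\beta,m)$. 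Taking expectation with respect to $\mu^o_{\R^+}$ and applying Gaussian-type upper bounds on the Brownian transition density on compact subsets of $M$—uniform in the starting point $o\in D_m$ because $\bar D_m$ is relatively compact—produces the claimed constant $C_1(m,k,p,T)$. The delicate point, carried out in the Appendix following \cite{CLW17, CLW18}, is the balancing of the singularity exponent $\beta$: large enough to force $A_{\tau_m(\gamma)}=+\infty$ $\mu^o_{\R^+}$-a.s.\ (needed for the vanishing property in (1)), yet small enough that the above heat-kernel estimates close up uniformly over $o\in D_m$.
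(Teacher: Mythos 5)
Your template --- compose a fixed smooth cutoff with an adapted, nondecreasing additive functional that vanishes before $\tau_{m-1}\wedge T$ and blows up at $\tau_m$ --- is the same one the paper uses (following Thalmaier and Thalmaier--Wang): there the weight is $f_m^{-2}$ for a smooth $f_m$ equal to $1$ on $D_{m-1}$ and vanishing on $\partial D_m$, with clock $T_m(t)=\int_0^t f_m^{-2}(\gamma(s))\,\dd s$. Parts (1) and (2) of your argument are fine modulo the divergence claim. The genuine gap is in part (3), and it is not a removable technicality: the two constraints you propose to ``balance'' through the exponent $\beta$ are incompatible. The Green-function computation for the one-dimensional semimartingale $m-\hat\rho(\gamma(s))$ (whose quadratic variation density $|\nabla\hat\rho|^2$ is bounded above and below on $\overline{D_m}$) shows that $\int_0^{\tau_m}\mathrm{dist}(\gamma(s),\partial D_m)^{-\beta}\,\dd s$ is a.s.\ \emph{finite} when $\beta<2$, so the vanishing half of (1) forces $\beta\ge 2$. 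But already for $\beta\ge 1$ one has $\mathbb{E}\big[\psi(\gamma(s))^p\big]=\infty$ for every $p\ge 1$, since the law of $\mathrm{dist}(\gamma(s),\partial D_m)$ has a strictly positive bounded density near $0$ and $\int_0 r^{-p\beta}\,\dd r$ diverges once $p\beta\ge 1$. Hence there is no admissible $\beta$, and the indicator $1_{\{A_s\le 1\}}$ cannot be discarded; nor does it confine $\gamma(s)$ away from $\partial D_m$ in any pointwise sense, since the path can sit arbitrarily close to the boundary at time $s$ while the accumulated integral $A_s$ is still small. As written, the step ``the $L^p$-norm of $\psi\circ\gamma$ restricted there is controlled by a constant depending only on $(p,\beta,m)$'' is exactly the assertion to be proved, and heat-kernel upper bounds alone will not give it.

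The device that closes this gap in the paper's proof is to truncate the singular integrand by the clock itself: the cutoff is $\psi\big(\frac1T\int_0^t\psi(T_m(s)/T-2)\,f_m^{-2}(\gamma(s))\,\dd s\big)$, so the derivative carries the factor $\psi(T_m(t)/T-2)\,f_m^{-2}(\gamma(t))$, supported on $\{T_m(t)\le 4T\}$. The substitution $u=T_m(s)$, $\dd u=f_m^{-2}(\gamma(s))\,\dd s$, then converts $\int_0^t\big(\psi(T_m(s)/T-2)f_m^{-2}(\gamma(s))\big)^p\dd s$ into $\int_0^{4T}\psi(u/T-2)^p f_m^{-2(p-1)}\big(\gamma(A_m(u))\big)\,\dd u$ with $A_m=T_m^{-1}$, and the remaining moment bound for the time-changed process is the substance of the Thalmaier--Wang estimate: $\gamma(A_m(\cdot))$ is a Brownian motion for the conformally rescaled \emph{complete} metric $f_m^{-2}g$ on $D_m$, whose geometry is controlled uniformly over starting points in $D_m$. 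To repair your version you would have to perform this change of variables and supply the analysis of the time-changed process (or import it from \cite{CLW17,TW98}); without that, estimate \eqref{eq2.5} is not established.
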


\begin{lem}\label{l2.6}{\bf [Chen-Li-Wu \cite{CLW18}]}
Let $l_{m,T}$ be the cut-off process constructed in Lemma \ref{l2.5}, then for
every $F \in \hat \F C_b$, $m\in \Z^+$, $T \in \R^+$, $h\in \H^{\infty}_+$ (see  \eqref{eq2.3-1}),
the following integration by parts formula holds
\begin{equation}\label{eq2.6}\aligned
&\int_{W^o_{\R^+}(M)}\left(\dd F (U_{\cdot}l_{m,T}(\cdot)h(\cdot) )\right)\mu^o_{\R^+}(\dd \gamma)\\
&=\int_{W^o_{\R^+}(M)}\left( F\int_0^{\infty} \left\langle(l_{m,T} h)'(s)+\frac{1}{2}\Ric_{U_s}\left(l_{m,T}(s)h(s)
\right), \dd \beta_s \right\rangle\right)\mu^o_{\R^+}(\dd \gamma),
\endaligned
\end{equation}
where $\beta_t$ denotes the anti-development of $\gamma(\cdot)$,
whose distribution is a standard $\R^n$-valued Brownian motion under $\mu_{\R^+}^o$.


\end{lem}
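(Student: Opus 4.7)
The plan is to prove \eqref{eq2.6} as an adapted version of Bismut--Driver's integration by parts formula via a Girsanov perturbation of the driving Brownian motion $\beta$. Define the $\scr F^\gamma_t$-adapted process
$$k(s):= (l_{m,T}h)'(s)+\tfrac{1}{2}\Ric_{U_s}\bigl(l_{m,T}(s)h(s)\bigr),\qquad s\geq 0.$$
Since $l_{m,T}(\cdot,\gamma)$ is adapted and vanishes for $t>\tau_m(\gamma)\wedge T$, and $h\in\H^\infty_+$ is compactly supported, $k$ is supported in $[0,\tau_m(\gamma)\wedge T]$. By Lemma \ref{l2.5}(3), by the boundedness of $h$ and $h'$, and since the set $D_m$ is relatively compact in the complete manifold $M$ (so $\Ric$ is bounded on $D_m$), we obtain $\int_0^\infty |k(s)|^2\dd s\in L^p(\mu^o_{\R^+})$ for all $p$, in particular Novikov's condition holds.

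For small $\epsilon\in\R$ define the probability measure
$$\frac{\dd\mathbb{Q}^\epsilon}{\dd\mu^o_{\R^+}}:=\exp\!\Bigl(-\epsilon\int_0^\infty\langle k(s),\dd\beta_s\rangle-\frac{\epsilon^2}{2}\int_0^\infty|k(s)|^2\dd s\Bigr),$$
so that $\beta^\epsilon_t:=\beta_t+\epsilon\int_0^t k(s)\dd s$ is a standard $\R^n$-Brownian motion under $\mathbb{Q}^\epsilon$. Let $U^\epsilon$ solve \eqref{eq2.1} with $\beta^\epsilon$ in place of $W$, and set $\gamma^\epsilon:=\pi(U^\epsilon)$. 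Since $\beta^\epsilon$ is a Brownian motion under $\mathbb{Q}^\epsilon$, the law of $\gamma^\epsilon$ under $\mathbb{Q}^\epsilon$ equals $\mu^o_{\R^+}$, and hence for any $F\in\hat\F C_b$,
$$\int F(\gamma)\,\dd\mu^o_{\R^+}=\int F(\gamma^\epsilon)\,\frac{\dd\mathbb{Q}^\epsilon}{\dd\mu^o_{\R^+}}\,\dd\mu^o_{\R^+}.$$
Differentiating this identity in $\epsilon$ at $\epsilon=0$, the left-hand side is constant in $\epsilon$ so its derivative vanishes, while on the right-hand side the derivative of the density produces $-F\cdot\int_0^\infty\langle k(s),\dd\beta_s\rangle$ and the derivative of $F(\gamma^\epsilon)$ produces $\dd F(V)$, where $V(s):=\tfrac{\dd}{\dd\epsilon}\big|_0\gamma^\epsilon(s)$. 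Rearranging yields \eqref{eq2.6} provided $V(s)=U_s l_{m,T}(s)h(s)$.

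The geometric identification $V(s)=U_s l_{m,T}(s)h(s)$ is the main obstacle, and it is precisely where the Ricci correction in the definition of $k$ is forced upon us. By differentiating the Stratonovich SDE \eqref{eq2.1} in $\epsilon$ along the perturbation $\epsilon k\,\dd s$, one obtains a linear SDE for $\dd U^\epsilon/\dd\epsilon\big|_0$ on $O(M)$; converting the Stratonovich term to Itô form introduces a term involving the horizontal Laplacian of the canonical vector fields, whose commutators $[H_i,H_j]$ encode the Riemann curvature via the structure equations of $O(M)$. When one projects down to $M$ and expresses the first-order variation of $\gamma^\epsilon$ in the frame coordinates $U_s$, the Itô--Stratonovich correction combines with the commutator contribution to produce exactly $-\tfrac12\Ric_{U_s}$ acting on the candidate variation $l_{m,T}h$; this is absorbed by the $+\tfrac12\Ric_{U_s}$ term inside $k$, leaving $V(s)=U_s l_{m,T}(s)h(s)$. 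This computation is the same one that underlies Driver's original formula on finite path spaces, and the fact that $l_{m,T}$ is adapted (rather than deterministic) is harmless since all manipulations above are pathwise.

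The remaining routine step is to justify interchanging $\dd/\dd\epsilon$ with the $\mu^o_{\R^+}$-integral. This follows from: boundedness of $F$; uniform $L^p$-control of $\dd\mathbb{Q}^\epsilon/\dd\mu^o_{\R^+}$ for $|\epsilon|$ small (Novikov); and an $L^1$-estimate on the difference quotients of $F(\gamma^\epsilon)$ obtained by applying the chain rule to $F$ (a smooth function of finitely many time-values of $\gamma$), where the resulting stochastic integrals against $k$ are controlled by Lemma \ref{l2.5}(3) and the boundedness of $\nabla g_j$ on $D_m$. Localization to $[0,\tau_m\wedge T]$ and the relative compactness of $D_m$ reduce all estimates to the standard finite-interval, compact-target setting.
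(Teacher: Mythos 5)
Your route (a direct Bismut--Girsanov perturbation of the driving Brownian motion) is genuinely different from the paper's, which instead performs a conformal change of metric $\tilde M_m=(D_m,f_m^{-2}g)$ to obtain a complete manifold with bounded Ricci curvature, invokes the known integration by parts formula of Elworthy--Li/Hsu for the adapted vector field $l_{m,T}h$ there, and then lets the localization parameter tend to infinity using that the diffusions agree before the exit times. Unfortunately, as written your argument has a genuine gap at its central step, the identification $V(s)=U_s\,l_{m,T}(s)h(s)$.

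The derivative of the stochastic development map in the direction of a pure drift perturbation $\kappa_t=\epsilon\int_0^t k(s)\,\dd s$ of $\beta$ is $V_s=U_sq_s$, where $q$ solves (in It\^o form)
\begin{equation*}
\dd q_s=k(s)\,\dd s-\tfrac{1}{2}\Ric_{U_s}q_s\,\dd s+a_s\,\dd\beta_s,\qquad
\dd a_s=\Omega_{U_s}\big(H_{\circ\dd\beta_s},H_{q_s}\big),
\end{equation*}
with $a_s$ an $\mathfrak{so}(n)$-valued process driven by the full Riemann curvature form $\Omega$. Your choice of $k$ makes the two Ricci terms cancel when $q=l_{m,T}h$, but the martingale term $\int_0^s a_r\,\dd\beta_r$ survives, so $q_s\neq l_{m,T}(s)h(s)$ unless $M$ is flat; and for a cylinder function $F$ the resulting extra contribution $\sum_j\langle U_{t_j}^{-1}\nabla_jf,\int_0^{t_j}a_r\,\dd\beta_r\rangle$ does not have zero expectation, since $U_{t_j}^{-1}\nabla_jf$ is only $\scr F_{t_j}$-measurable. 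The correct Bismut--Driver argument perturbs $\beta$ by a drift \emph{and} an adapted orthogonal rotation of its increments, chosen exactly so as to kill $a_s\,\dd\beta_s$; the rotation leaves Wiener measure invariant and hence does not alter the Girsanov density, which is why the divergence still reads $\int\langle (l_{m,T}h)'+\frac12\Ric_{U_s}(l_{m,T}h),\dd\beta_s\rangle$. Your third paragraph asserts that the It\^o--Stratonovich correction "leaves" $V(s)=U_sl_{m,T}(s)h(s)$, which is precisely the false step. A secondary issue: Novikov's condition requires exponential integrability of $\int_0^\infty|k(s)|^2\dd s$, which does not follow from the $L^p$-bounds of Lemma \ref{l2.5}(3) for all finite $p$; this needs either a separate argument or a further localization.
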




Based on the above Lemma \ref{l2.6}, and using an approximation procedure, it is not difficult to obtain the following integration by parts formula.

\begin{lem}\label{l2.7}
Let $l_{m,T}$ be mentioned in Lemma \ref{l2.6},  then for
every $F \in \F C_b^1$, $m \in \mathbb{Z}_+$,
$T\in \R^+$, $h\in \H^{\infty}_+$, the following integration by parts formula holds
\begin{equation}\label{eq2.8}\aligned
&\int_{{\bf E}^o_{\R^+}(M)}\left(\dd F (U_{\cdot}l_{m,T}(\cdot)h(\cdot) )\right)\mu^o_{\R^+}(\dd \gamma)\\
&=\int_{{\bf E}^o_{\R^+}(M)}\left( F\int_0^{\infty} \left\langle(l_{m,T} h)'(s)+\frac{1}{2}\Ric_{U_s}\left(l_{m,T}(s)h(s)
\right), \dd \beta_s \right\rangle\right)\mu^o_{\R^+}(\dd \gamma),
\endaligned
\end{equation}
where $\beta_t$ denotes the anti-development of $\gamma(\cdot)$, which is a Brownian motion under
$\mu_{\R^+}^o$.
Here $\mu^o_{\R^+}$ can be seen as  a probability measure
on ${\bf E}^o_{\R^+}(M)$ with support contained in $W^o_{\R^+}(M)$, and
$l_{m,T}(t,\gamma)$ is also well defined for $\mu^o_{\R^+}$-a.s. $\gamma \in {\bf E}^o_{\R^+}(M)$.
\end{lem}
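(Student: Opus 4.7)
The plan is to derive \eqref{eq2.8} from \eqref{eq2.6} by approximating any $F\in\F C_b^1$ by Riemann-sum cylinder functions in $\hat\F C_b$ and then passing to the limit. Given $F\in\F C_b^1$ of the form \eqref{eq 3.1}, for each $N\ge 1$ and $j=1,\dots,m$, set $t^{(N)}_{i,j}:=iT_j/N$ ($i=1,\dots,N$) and define
$$R_{j,N}(\gamma):=\frac{T_j}{N}\sum_{i=1}^N g_j\bigl(t^{(N)}_{i,j},\gamma(t^{(N)}_{i,j})\bigr),\qquad F_N(\gamma):=f\bigl(R_{1,N}(\gamma),\dots,R_{m,N}(\gamma)\bigr).$$
Since $f\in C_b^1(\R^m)$ and each $g_j\in C_b^{0,1}([0,\infty)\times M)$ is bounded with bounded gradient in the second variable, $F_N$ is a bounded Lipschitz function of the finite collection $\{\gamma(t^{(N)}_{i,j})\}_{i,j}$; after a strict reordering of the distinct evaluation times this places $F_N\in\hat\F C_b$, so Lemma \ref{l2.6} applies to each $F_N$.

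The remaining task is to pass $N\to\infty$ on both sides of the identity supplied by Lemma \ref{l2.6}. A direct differentiation yields
$$\dd F_N(U_\cdot l_{m,T}(\cdot)h(\cdot))(\gamma)=\sum_{j=1}^m\partial_j f(\vec R_N(\gamma))\,\frac{T_j}{N}\sum_{i=1}^N\bigl\langle\nabla g_j(t^{(N)}_{i,j},\gamma(t^{(N)}_{i,j})),U_{t^{(N)}_{i,j}}l_{m,T}(t^{(N)}_{i,j})h(t^{(N)}_{i,j})\bigr\rangle,$$
and for $\mu^o_{\R^+}$-a.s.\ $\gamma$ the integrand $s\mapsto\langle\nabla g_j(s,\gamma(s)),U_s l_{m,T}(s)h(s)\rangle$ is continuous on $[0,T_j]$ (by continuity of $\gamma$, $U_\cdot$, $h$, joint continuity of $\nabla g_j$, and absolute continuity of $l_{m,T}(\cdot,\gamma)$ from Lemma \ref{l2.5}), so the Riemann sums converge pointwise to $\dd F(U_\cdot l_{m,T}(\cdot)h(\cdot))(\gamma)$ as given by \eqref{eq2.3}. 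Uniform boundedness of each summand by $\|\partial_j f\|_\infty\|\nabla g_j\|_\infty\|h\|_\infty$ then allows dominated convergence to handle the left-hand side, while $F_N\to F$ pointwise with $\|F_N\|_\infty\le\|f\|_\infty$ handles the scalar factor on the right.

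For the right-hand side, we need the Itô integral to lie in $L^2(\mu^o_{\R^+})$ so that bounded convergence applies: since $l_{m,T}$ vanishes outside $[0,\tau_m]$ on which $U_s\in O(D_m)$ with $\Ric$ bounded, and since the $L^2$ bound \eqref{eq2.5} (with $p=2$) controls $l_{m,T}'$ while $h\in\H^\infty_+$ has compact support, the quadratic variation $\int_0^\infty|(l_{m,T}h)'(s)+\tfrac12\Ric_{U_s}(l_{m,T}(s)h(s))|^2\dd s$ is uniformly bounded in $L^1(\mu^o_{\R^+})$. The passage from $W^o_{\R^+}(M)$ to ${\bf E}^o_{\R^+}(M)$ is automatic, because $\mu^o_{\R^+}$ concentrates on $W^o_{\R^+}(M)$ and both sides admit unambiguous extensions. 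I expect the only non-routine step to be verifying uniform integrability of the stochastic integrand, which is precisely what the cut-off estimate \eqref{eq2.5} was designed to deliver.
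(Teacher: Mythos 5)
Your proposal is correct and follows essentially the same route as the paper: the authors also approximate $F\in\F C_b^1$ by Riemann-sum cylinder functions $F_k(\gamma)=f\bigl(\frac{1}{k}\sum_{i=1}^{[kt]}g(i/k,\gamma(i/k))\bigr)\in\hat\F C_b$, apply Lemma \ref{l2.6} to each $F_k$, and pass to the limit using boundedness of $f'$ and $\nabla g$ together with the $L^2$-integrability of the stochastic integral guaranteed by \eqref{eq2.5} and the compact support of $h$. The only cosmetic difference is that you spell out the uniform-integrability check for the right-hand side a little more explicitly than the paper does.
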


\beg{proof}  In fact,
it suffices to check the result holds for $F(\gamma)=f(\int_0^t  g(s,\gamma(s)) \dd s)\in \F C_b^1$ with arbitrarily
pre-fixed $t\in \R^+$, and the general case can be handled similarly. For any $k\geq1$, defining
$$F_k(\gamma)= f\left(\frac{1}{k}\sum_{i=1}^{[kt]} g(i/k,\gamma(i/k))\right).$$
Fix a time $T>t>0$, then
\begin{equation}\label{eq2.6-1}\aligned
&\int_{{\bf E}^o_{\R^+}(M)}\left(\dd F_k) (U_{\cdot}l_{m,T}(\cdot)h(\cdot) \right)\mu^o_{\R^+}(\dd \gamma)\\
&=\int_{{\bf E}^o_{\R^+}(M)}\left( F_k\int_0^{\infty} \left\langle(l_{m,T} h)'(s)+\frac{1}{2}\Ric_{U_s}\left(l_{m,T}(s)h(s)
\right), \dd \beta_s \right\rangle\right)\mu^o_{\R^+}(\dd \gamma),
\endaligned
\end{equation}
where we used $\supp(\mu^o_{\R^+})\subset W^o_{\R^+}(M)$ and \eqref{eq2.6}.

By the dominated convergence theorem, it is easy to see that $F_k\rightarrow F$ in
$L^2({\bf E}^o_{\R^+}(M);\mu^o_{\R^+})$ as $k\rightarrow\infty.$ According to the definition of directional derivative, we have
$$\aligned
&\dd F (U_{\cdot}l_{m,T}(\cdot)h(\cdot) )=\langle D F, l_{m,T}h\rangle_{\HH_+}=
\hat{\partial}f(\gamma)\int_0^t
 \left\langle U_s^{-1}(\gamma)\nabla g(s,\gamma(s)),(l_{m,T}h)(s)
 \right\rangle_{\mathbb{R}^d} \dd s\\
 &\dd F_k (U_{\cdot}l_{m,T}(\cdot)h(\cdot) )=\langle D F_k, l_{m,T}h\rangle_{\HH_+}=
 \frac{1}{k}\hat{\partial}f_k(\gamma)\sum_{i=1}^{[kt]}
 \left\langle U_{i/k}^{-1}(\gamma)\nabla g(i/k,\gamma(i/k)),(l_{m,T}h)(i/k)
 \right\rangle_{\mathbb{R}^n},\endaligned$$
 with $\hat{\partial}f=f'\left(\int_0^t  g(s,\gamma(s)) \dd s\right)$  and $\hat{\partial}f_k=f'\left(\frac{1}{k}\sum_{i=1}^k g(i/k,\gamma(i/k))\right).$
 By our assumptions for $f$ and $g$ (especially $\nabla g$ is bounded)
 we know that
 $$\langle D F_k, l_{m,T}h\rangle_{\HH_+}\rightarrow\langle D F, l_{m,T}h\rangle_{\HH_+}\textrm{ in }
 L^2({\bf E}^o_{\R^+}(M);\mu^o_{\R^+}),\quad  k\rightarrow\infty.$$

By using the above argument, we get \eqref{eq2.8} by taking $k \rightarrow \infty$ on both sides of the equation \eqref{eq2.6-1} .

\end{proof}

In the following we  prove Theorem \ref{T2.1} by using  the above integration by parts formula.

\ \newline\emph{\bf Proof of Theorem \ref{T2.1}.}
{\bf$(a)$ Closablity:}
Let $\{F_m\}_{m=1}^{\infty}\subseteq \F C_b^1$ be a sequence of cylinder functions with
\begin{equation}\label{eq2.10}
\begin{split}
\lim_{m \rightarrow \infty}\mu^o_{\R^+}\left(F_m^2\right)=0,\ \
\lim_{k,m \rightarrow \infty}\E^o_{\R^+}\left(F_k-F_m,F_k-F_m\right)=0.
\end{split}
\end{equation}
Thus $\{D F_m\}_{m=1}^{\infty}$ is a Cauchy sequence in
$L^2\left({\bf E}^o_{\R^+}(M)\rightarrow \HH_+;\mu^o_{\R^+}\right)$, for which there exists a limit $\Phi$. It only suffices to prove that $\Phi=0$.
Suppose that $\{h_i\}_{i=1}^{\infty}\subset \H^{\infty}_+\cap C_c^1([0,\infty);\R^n)$ is an orthonormal basis of $\HH_+$.
By Lemma \ref{l2.7},
for each $G \in \F C_b^1$ and any positive integers $k,m,i\geq1$, we have
\begin{equation}\label{eq2.11}
\begin{split}
&\mu^o_{\R^+}\left(\langle D F_k, l_{m,T} h_i\rangle_{\HH_+}G\right)\\
&=\mu^o_{\R^+}\left(\langle D \left(F_kG\right), l_{m,T} h_i\rangle_{\HH_+}\right)
-\mu^o_{\R^+}\left(\langle D G, l_{m,T}h_i\rangle_{\HH_+}F_k\right)\\
&=\mu^o_{\R^+}\left(F_kG \int_0^\infty\left\langle(l_{m,T} h_i)'(s)+\frac{1}{2}\Ric_{U_s}\left(l_{m,T}(s)
h_i(s)\right)
, \dd \beta_s\right\rangle\right)\\
&~~~-\mu^o_{\R^+}\left(\langle D G, l_{m,T}h_i\rangle_{\HH_+}F_k\right).
\end{split}
\end{equation}
In particular, for each $h_i\in C_c^1([0,\infty);\R^n)$, by \eqref{eq2.5} and the compact property of $\supp(h_i)$, we have
$$\int_0^\infty\left\langle(l_{m,T} h_i)'(s)+\frac{1}{2}\Ric_{U_s}\left(l_{m,T}(s)
h_i(s)\right), \dd \beta_s\right\rangle\in L^2({\bf E}^o_{\R^+}(M);\mu^o_{\R^+}).$$

Since $G$ and $D G$ are bounded, and $F_k\rightarrow 0$, $|DF_k-\Phi|_{\HH_+}\rightarrow 0$
in $L^2({\bf E}^o_{\R^+}(M);\mu^o_{\R^+})$, we let
 $k \rightarrow \infty$ in \eqref{eq2.11} and obtain that for every $m,T,i\in \mathbb{N}^+$,
\begin{equation*}
\begin{split}
&\mu^o_{\R^+}\left(\langle \Phi, l_{m,T}h_i\rangle_{\HH_+}G\right)=0,\quad \forall\
G \in \F C_b^1.\
\end{split}
\end{equation*}
Therefore we could find a $\mu^o_{\R^+}$-null set $\Delta_i \subset {\bf E}^o_{\R^+}(M)$, such that
\begin{equation}\label{eq2.12}
\langle \Phi(\gamma), l_{m,T}(\gamma)h_i\rangle_{\HH_+}=0,\quad \ \forall\ m,T\in \mathbb{N}^+,\ \gamma \notin \Delta_i.
\end{equation}

For a fixed $h_i\in \H^{\infty}_+$, there exists a positive integer  $T_i\in \mathbb{N}^+$ (which may depend on $h_i$) such that
$\supp (h_i)\subset [0,T_i]$. Since $\gamma(\cdot)$ is non-explosive, there is a $\mu^o_{\R^+}$-null set $\Delta_0\subset {\bf E}^o_{\R^+}(M)$ such that
for every $\gamma \notin \Delta_0$, there exists  $m_i(\gamma)\in \mathbb{N}^+$ satisfying
$$\gamma(t)\in D_{m_i-1}, \quad\text{ for all}~~ t \in [0,T_i],$$ where $D_{m_i-1}$ is introduced by \eqref{cut}. Hence $l_{m_i,T_i}(t,\gamma)=1$ for all $t \in [0,T_i]$.
Combining this with \eqref{eq2.12} we know
$$\langle \Phi(\gamma), h_i\rangle_{\HH_+}=0,\quad  i\ge 1,
\gamma \notin \Delta_i\cup \Delta_0,$$
which implies that
$\Phi(\gamma)=0$, $\forall\ \gamma \notin \Delta=\cup_{i=0}^{\infty}\Delta_i$. So $\Phi=0$, $\mu_{\R^+}^o$-a.s., and
$(\E_{\R^+}^o,\F C_b^1)$ is closable. By the standard method, we show easily that its closure
$(\E_{\R^+}^o,\D(\E^o_{\R^+}))$ is a Dirichlet form.

{\bf$(b)$  Quasi-Regularity:}
In order to prove the quasi-regularity of $(\E_{\R^+}^o,\D(\E^o_{\R^+}))$,
 we need to verify conditions (i)-(iii) in \cite[Definition IV-3.1]{MR92}.

It is easy to see that each $G\in \F C_b^1$ is continuous in (Polish space) $({\bf E}^o_{\R^+}(M), \tilde d)$, and
$\F C_b^1$ is dense in $\D(\E^o_{\R^+})$ under the $(\E^o_{\R^+,1})^{1/2}$-norm with
$$\E^o_{\R^+,1}(\cdot,\cdot):=\E^o_{\R^+}(\cdot,\cdot)+\|\cdot\|^2_{L^2({\bf E}^o_{\R^+}(M),\mu^o_{\R^+})}.$$
 So (ii) of \cite[Definition IV-3.1]{MR92}
holds.

Since the metric space $({\bf E}^o_{\R^+}(M), \tilde d)$ is separable,
we can choose a fixed countable dense subset $\{\xi_m|m\in\mathbb{N}^+\}\subset W^o_{\R^+}(M)$.
Next, we  prove the tightness of the capacity for $(\E^o_{\R^+},\D(\E^o_{\R^+}))$ which ensures
(i) of \cite[Definition IV-3.1]{MR92}.

Let $\varphi\in C_b^\infty(\mathbb{R})$ be an increasing function  satisfying with
$$\varphi(t)=t,\quad \forall~t\in[-1,1]~~\text{and}~~\|\varphi'\|_{\infty}\leq 1.$$

For each $m\geq1$, the function $v_m:{\bf E}^o_{\R^+}(M)\rightarrow\mathbb{R}$ is given by
$$v_m(\gamma)=\varphi(\tilde{d}(\gamma,\xi_m)),\quad\gamma\in {\bf E}^o_{\R^+}(M),$$
with $\tilde{d}$ defined in \eqref{eq 2.12-1}.
By Lemma \ref{l2.8} below $v_m\in \D(\E^o_{\R^+})$.
We claim that
\begin{equation}\label{eq 2.13}w_k:=\inf_{m\leq k}v_m, k\in \mathbb{N}^+, \textrm{ converges } \E^o_{\R^+} -\text{quasi-uniformly to zero on} ~{\bf E}^o_{\R^+}(M).\end{equation}
Then  for every $i\in\mathbb{N}^+$ there exists a closed set $K_i$ such that $\textrm{Cap}(K_i^c)<\frac{1}{i}$ and $w_k\rightarrow0$ uniformly on $K_i$ as $k\rightarrow \infty$. Here Cap is the capacity associated to $(\E^o_{\R^+},\D(\E^o_{\R^+}))$ 
(see \cite[Section III.2]{MR92}). Hence for every $0<\varepsilon<1$ there exists $k\in\mathbb{N}^+$ such that $w_k<\varepsilon$ on $K_i$, by using the definitions of $v_m$ and $w_k$, we obtain that $K_i\subset \cup_{m=1}^k B(\xi_m,\varepsilon)$, where $B(\xi_m,\varepsilon):=\{\gamma\in {\bf E}^o_{\R^+}(M);
\tilde d(\xi_m,\gamma)<\varepsilon\}$.
   Consequently, for every $i\ge 1$, $K_i$ is totally bounded, hence compact. Combining this with
 the fact $\lim_{i \rightarrow \infty} \textrm{Cap}(K_i^c)=0$ we know the capacity for $(\E^o_{\R^+},\D(\E^o_{\R^+}))$ is tight.

Now it only remains to show the claim \eqref{eq 2.13}. For each fixed $m\geq1$, by \eqref{l2-8-2} in Lemma \ref{l2.8} below we obtain
\begin{equation*}
Dv_m(\gamma)(s)=\varphi'(\tilde{d}(\gamma,\xi_m))\cdot\Big(\sum_{k=1}^{\infty}
\frac{1}{2^k} U_s^{-1}\nabla_1 \tilde{\rho}(\gamma(s),\xi_m(s))1_{(k-1,k]}(s)\Big),
\end{equation*}
where $\nabla_1\tilde \rho$ is the gradient of $\tilde \rho$ with respect to the first variable.
By the definition \eqref{DF} of the quadratic form $\E^o_{\R^+}$, we have
\begin{equation}\label{eq2.14}
\aligned
&\E^o_{\R^+}(v_m,v_m)=\frac{1}{2}\int_{{\bf E}^o_{\R^+}(M)}\big|Dv_m(\gamma)\big|^2_{\HH_+}\dd\mu^o_{\R^+}(\gamma)\\
&=\frac{1}{2}\sum_{k=1}^{\infty}\frac{1}{2^{2k}}\int_{{\bf E}^o_{\R^+}(M)}|\varphi'(\tilde{d}(\gamma,\xi_m))|^2\cdot
\Big(\int_{k-1}^k \big|\nabla_1 \tilde{\rho}(\gamma(s),\xi_m(s))\big|^2_{T_{\gamma(s)}M}\dd s\Big)\dd\mu^o_{\R^+}(\gamma)\\
&\le \|\varphi'\|_{\infty}\cdot \Big(\sum_{k=1}^{\infty}\frac{1}{2^{2k+1}}\Big)\le C,\ \forall\ m \in \mathbb{N}^+,
\endaligned\end{equation}
where $C>0$ is a constant independent of $m$, and in the first inequality above we  applied the property that
$|\nabla_1 \rho|\le 1$.

Since $\{\xi_m|m\in\mathbb{N}\}$ is dense in $({\bf E}^o_{\R^+}(M); \tilde d)$,
it is easy to verify that $w_k\downarrow0$ $\mu^o_{\R^+}$-a.s.
on ${\bf E}^o_{\R^+}(M)$ hence in $L^2({\bf E}^o_{\R^+}(M);\mu^o_{\R^+})$.
By \eqref{eq2.14} we arrive at
$$\E^o_{\R^+}(w_k,w_k)\leq  C,\quad \forall k\in \mathbb{N}^+,$$
 where $C$ is independent of $k$.

 Based on this and \cite[I.2.12, III.3.5]{MR92} we obtain that a subsequence of the Cesaro mean of some subsequence of $w_k$ converges to zero $\E^o_{\R^+}$-quasi-uniformly. But since $\{w_k\}_{k\in\mathbb{N}^+}$ is decreasing, \eqref{eq 2.13} follows. Now tightness in (i) of \cite[Definition IV-3.1]{MR92} follows.

{For any $\gamma, \eta\in {\bf E}^o_{\R^+}(M)$ with $\varepsilon:=\tilde{d}(\gamma,\eta)>0$},  there exists certain $\xi_N$ such that $\tilde{d}(\xi_N,\eta)<\frac{\varepsilon}{4}$ and $\tilde{d}(\xi_N,\gamma)>\frac{\varepsilon}{4}$. Take $\{F_m(\gamma):=\varphi(\tilde{d}(\xi_m,\gamma)),m\in \mathbb{N}\}$ for $\varphi$ as above,
(iii) of \cite[Definition IV-3.1]{MR92} follows.

$\hfill\square$

For a locally Lipschitz continuous function $g:M \rightarrow \R$, by Radamacher's theorem,  it is well known that
the gradient $\nabla g(x)$ of $g$ exists for all $x \in M/S$ with some Lebesgue null set $S\subset M$. For convenience, let us define $\nabla g(x)=0$ for any $x\in S$.
Also note that $\mu^o_{\R^+}\big(\gamma(s)\in S\big)=0$ for each $s>0$, hence $\nabla g(\gamma(s))$
is $\mu^o_{\R^+}$-a.s. well defined for every $s>0$.

Let $C_{b,Lip}([0,\infty)\times M)$ be the set of all functions $f$ on the product space $[0,\infty)\times M$ and each function $g(t,x)$ is bounded and continuous with respect to the first variable $t\in [0,\infty)$, and uniformly Lipschitz continuous with respect to the second variable $x\in M$.

\begin{lem}\label{l2.8}
\begin{itemize}
\item [(1)] 
For each fixed function $F(\gamma):=f(\int_0^t g(s,\gamma(s))\dd s)$ with some fixed $t>0,g\in C_{b,Lip}([0,\infty)\times M)$ and
 $f\in C_b^1(\mathbb{R})$. Then $F \in \D(\E^o_{\R^+})$ and we have
\begin{equation}\label{l2-8-1}
DF(\gamma)(s)=f'\left(\int_0^t g(r,\gamma(r))\dd r\right)\cdot\Big(U_s^{-1}(\gamma)\nabla g(s,\gamma(s))1_{(0,t]}(s)\Big)
\end{equation}
for $\dd s\times \mu^o_{\R^+}-a.s.
  (s,\gamma)\in [0,\infty)\times {\bf E}^o_{\R^+}(M).$

\item[(2)] For a fixed $\sigma\in W^o_{\R^+}(M)$, let $G(\gamma):=f(\tilde d(\gamma,\sigma))$ with  $f\in C_b^1(\R)$ and
$\tilde d$ defined by \eqref{eq 2.12-1}. Then $G \in \D(\E^o_{\R^+})$ and we have
\begin{equation}\label{l2-8-2}
DG(\gamma)=f'(\tilde d(\gamma,\sigma))\cdot\Big(\sum_{k=1}^{\infty}\frac{1}{2^k}
U_s^{-1}(\gamma)\nabla_1 \tilde{\rho}(\gamma(s),\sigma(s))1_{(k-1,k]}(s)\Big)
\end{equation}
for $\dd s\times \mu^o_{\R^+}-a.s.
  (s,\gamma)\in [0,\infty)\times {\bf E}^o_{\R^+}(M),$
where $\nabla_1 \tilde \rho(\cdot,x)$ denotes the gradient with respect to the first variable of
$\tilde \rho(\cdot,\cdot)$.
\end{itemize}
\end{lem}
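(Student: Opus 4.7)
\textbf{Proof plan for Lemma \ref{l2.8}.}

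The overall strategy is to exhibit each of $F$ and $G$ as the $\mathcal{E}^o_{\R^+,1}$-limit of a sequence of cylinder functions in $\F C_b^1$, and then invoke the closability of $(\E^o_{\R^+},\F C_b^1)$ established in part (a) of the proof of Theorem \ref{T2.1} to conclude membership in $\D(\E^o_{\R^+})$ together with the claimed formulas for the gradient. The main point to deal with is that $g$ in (1) is only Lipschitz (not $C^1$) in the second variable, and $\tilde\rho$ in (2) has an analogous regularity issue; thus direct membership in $\F C_b^1$ fails and an approximation is needed.

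For part (1), I would fix a smooth partition of unity on $M$ together with local charts and mollify $g(s,\cdot)$ chart-wise (or more geometrically, use heat-kernel smoothing on $M$) to obtain a family $g_n\in C_b^{0,1}([0,\infty)\times M)$ such that $g_n\to g$ uniformly on $[0,t]\times K$ for every compact $K\subset M$, $\|g_n\|_\infty\le \|g\|_\infty+1$, and $|\nabla g_n(s,\cdot)|\le \mathrm{Lip}(g)+o(1)$ uniformly in $s$, with $\nabla g_n(s,x)\to \nabla g(s,x)$ for a.e.\ $x\in M$ (this last point uses Rademacher's theorem and standard properties of the mollifier). Setting $F_n(\gamma):=f\bigl(\int_0^t g_n(s,\gamma(s))\dd s\bigr)\in\F C_b^1$, dominated convergence gives $F_n\to F$ in $L^2(\mu^o_{\R^+})$. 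For the gradient, formula \eqref{eq2.3} yields
\[
DF_n(\gamma)(s)=f'\!\left(\int_0^t g_n(r,\gamma(r))\dd r\right)U_s^{-1}(\gamma)\nabla g_n(s,\gamma(s))\,1_{(0,t]}(s).
\]
Because $\mu^o_{\R^+}(\gamma(s)\in S)=0$ for each $s>0$ (the law of Brownian motion at any positive time is absolutely continuous with respect to the volume measure), Fubini and the a.e.\ pointwise convergence of $\nabla g_n$ give convergence of $DF_n$ to the right-hand side of \eqref{l2-8-1} $\dd s\times\mu^o_{\R^+}$-a.e., while the uniform bound on $|\nabla g_n|$ combined with $f,f'$ being bounded yields an integrable dominating function. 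Hence $DF_n$ converges in $L^2({\bf E}^o_{\R^+}(M)\to\HH_+;\mu^o_{\R^+})$, and the closability proved in Theorem \ref{T2.1}(a) gives $F\in\D(\E^o_{\R^+})$ with the stated formula.

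For part (2) the idea is to first truncate the infinite series defining $\tilde d$ and then apply part (1) to each truncated piece. Set $\tilde d_N(\gamma,\sigma):=\sum_{k=1}^N \frac{1}{2^k}\int_{k-1}^k\tilde\rho(\gamma(s),\sigma(s))\dd s$ and $G_N(\gamma):=f(\tilde d_N(\gamma,\sigma))$. For each fixed $k$, the function $(s,x)\mapsto\tilde\rho(x,\sigma(s))$ lies in $C_{b,\mathrm{Lip}}([k-1,k]\times M)$ (since $\sigma$ is continuous and $\tilde\rho$ is $1$-Lipschitz in the first variable), so applying (1) to each summand and using the chain rule for compositions of such functionals (which follows by the same mollification argument applied jointly to the $f'$ factor and each $\nabla_1\tilde\rho$ factor) gives $G_N\in\D(\E^o_{\R^+})$ with
\[
DG_N(\gamma)(s)=f'(\tilde d_N(\gamma,\sigma))\sum_{k=1}^N\frac{1}{2^k}U_s^{-1}(\gamma)\nabla_1\tilde\rho(\gamma(s),\sigma(s))1_{(k-1,k]}(s).
\]
Since $\sum_{k>N}2^{-k}\to 0$ we have $\tilde d_N\to\tilde d$ uniformly on ${\bf E}^o_{\R^+}(M)$, hence $G_N\to G$ in $L^2(\mu^o_{\R^+})$. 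Using $|\nabla_1\tilde\rho|\le 1$ and $\sum_k 2^{-2k}<\infty$ one obtains that $\{DG_N\}$ is Cauchy in $L^2(\mu^o_{\R^+};\HH_+)$ and converges to the right-hand side of \eqref{l2-8-2}, again by dominated convergence together with the almost-sure continuity of $f'(\tilde d_N(\gamma,\sigma))\to f'(\tilde d(\gamma,\sigma))$. Closability then delivers $G\in\D(\E^o_{\R^+})$ with the claimed gradient.

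The principal technical obstacle is the first part: producing a smooth approximation $g_n$ of a merely Lipschitz function on the manifold that simultaneously (i) has uniformly bounded gradient, (ii) satisfies $\nabla g_n\to \nabla g$ almost everywhere, and (iii) lies in $C_b^{0,1}([0,\infty)\times M)$ so that $F_n\in\F C_b^1$. Once this is done, the remaining convergence arguments are routine applications of dominated convergence and the Polish-space structure, and part (2) reduces to part (1) together with a uniform tail estimate for the series defining $\tilde d$.
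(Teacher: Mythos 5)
Your proposal follows essentially the same route as the paper: chart-wise mollification of the Lipschitz data to produce $\F C_b^1$ approximants with uniformly bounded gradients and a.e.\ convergent gradients, then closability plus dominated convergence, and for (2) truncation of the series defining $\tilde d$ followed by an application of (1). The only point the paper makes explicit that you leave implicit is the non-compactness of $M$: the paper first treats $g$ with compact support (so finitely many charts suffice for the uniform gradient bound) and then reduces the general case to it via a cutoff $h_R(\eta(\cdot))$ built from a Greene--Wu exhaustion function $\eta$ with $|\nabla\eta|$ bounded, which is exactly the device needed to make your ``principal technical obstacle'' go through.
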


\begin{proof}
{\bf Step (i)}
First we suppose that 
$g\in C_{b,Lip}([0,\infty)\times M)$ and there exist a constant $L\in \R^+$ and a compact set $K\subset M$ such that
$$\aligned& |g(s,x)-g(s,y)|\leq L \rho(x,y), ~~\forall x,y\in M, s\in [0,\infty)
\\&~\text{and}~
\supp(g(s,\cdot))\subset K,~~\forall s\in [0,\infty).\endaligned$$
Consider a local coordinate system $\{U,\varphi_U\}$ on $M$, i.e. for any $x\in M$, there exists a (bounded) neighborhood $U$ of $x$ and a $C^\infty$ diffeomorphism $\varphi_U:U\rightarrow V$, where $V$ is a (bounded open) subset in $\mathbb{R}^n$. Without loss of generality, we may assume that $\textrm{supp}g\subset[0,\infty)\times K$.
According to the unit decomposition theorem on manifold, there exist $N\in \mathbb{N}^+$, $U_i\in \{U,\varphi_U\}, 1\leq i\leq N$, and non-negative smooth functions $\alpha_i, 1\leq i\leq N$ such that \begin{equation}\label{Cond}
\sum_{i=1}^N\alpha_i\Big|_{K}\equiv1,
 ~K\subset \bigcup_{i=1}^N U_i~\text{and}~\supp(\alpha_i)\subset U_i, 1\leq i\leq N.\end{equation}

Define $g_i:=g\alpha_i$ for each $1\le i \le N$. Then, from \eqref{Cond},
we know $\textrm{supp}(g_i)\subset [0,\infty)\times U_i$.
Let $V_i:=\varphi_{U_i}(U_i)\subset \R^n$ and
$\tilde{g}_i:[0,\infty)\times V_i\rightarrow\mathbb{R}$ denoted by
$\tilde{g}_i(s,y):= g_i(s,\varphi^{-1}_{U_i}(y))$ for $s\in [0,\infty), y\in V_i$. We can easily check that $\tilde{g}_i(s,\cdot)$ is Lipschitz continuous with
support contained in $V_i$ for all $s\in [0,\infty)$.

Let $\phi\in C_c^{\infty}(\R^n)$ be a polishing function satisfying that
$\supp (\phi) \subset B_1(0)$ and $\int_{\R^n}\phi(x)\dd x=1$, where $B_1(0)$ is the $0$-centered unit ball in $\R^n$.
Note that $\supp (\tilde g_i(s,\cdot))\subset V_i$, then for each $1\le i \le N$, there exists a constant $\varepsilon_i>0$ such that
for every $\varepsilon\in (0,\varepsilon_i)$, the following $\tilde g_i^{\varepsilon}(s,\cdot)$ is well defined on $V_i$,
$$\tilde{g}_i^\varepsilon(s,u):=\tilde{g}_i*\phi_\varepsilon(s,u)=
\int_{\R^d} \tilde g_i(s,v)\phi_\varepsilon(u-v)\dd v,\ \ \forall\ (s,u)\in [0,\infty)\times V_i,$$
and $\supp \tilde g_i^{\varepsilon}(s,\cdot)\subset V_i$,
where $\phi_\varepsilon(u):=\varepsilon^{-n}\phi\big(\frac{u}{\varepsilon}\big)$.
It is easy to verify
\begin{equation}\label{l2-8-3a}
\lim_{\varepsilon \downarrow 0}\sup_{v \in V_i}|\tilde g_i^{\varepsilon}(s,v)-\tilde g_i(s,v)|=0,\ \ s\in [0,\infty).
\end{equation}
Since the Lipschitz constant of $\tilde g_i(s,\cdot)$ is independent of $s$, we also have
for any $p>0$,
 \begin{equation}\label{l2-8-3}
 \begin{split}
 &\sup_{\varepsilon\in (0,\varepsilon_i), v\in V_i, s\in [0,\infty)}|\nabla \tilde{g}_i^\varepsilon(s,v)|\leq C_i,\quad 1\leq i\leq N,\\
 & \lim_{\varepsilon \downarrow 0}\int_{V_i}
 |\nabla \tilde g_i^{\varepsilon}(s,v)-\nabla \tilde g_i(s,v)|^p \dd v=0,\ \forall\ s\in [0,\infty)
 \end{split}
 \end{equation}
for some constants $C_i>0,\quad 1\leq i\leq N$, where $\nabla$ is the gradient w.r.t. the second variable.

 Define $g_i^\varepsilon:=\tilde{g}_i^\varepsilon\circ \varphi_{U_i}$, and we extend $g_i^\varepsilon$ to the whole product space $[0,\infty)\times M$ by letting $g_i^\varepsilon|_{[0,\infty)\times U_i^c}=0$. Since $\supp(\tilde{g}_i^\varepsilon)\subset [0,\infty)\times V_i$ for all $\varepsilon\in (0,\varepsilon_i)$
  implies that $\supp g_i^\varepsilon\subset [0,\infty)\times U_i$ for every $\varepsilon\in (0,\varepsilon_i)$,
  it is not difficult to see $g_i^\varepsilon\in C_b^{0,1}([0,\infty)\times M)$.
Taking $\varepsilon_0:=\inf_{1\le i \le N}\varepsilon_i$, then for every
  $\varepsilon\in (0,\varepsilon_0)$ we could define $g^\varepsilon:=\sum_{i=1}^N g_i^\varepsilon$. By \eqref{Cond}, \eqref{l2-8-3a} and
\eqref{l2-8-3} we know
for all $p>0$,
 \begin{equation}\label{eq2.15}
 \begin{split}
 &\lim_{\varepsilon \downarrow 0}\sup_{y \in M}|g^{\varepsilon}(s,y)-g(s,y)|=0,\ \ s\in [0,\infty),\\
 &\sup_{\varepsilon\in (0,\varepsilon_0), y\in M,s\in [0,\infty)}|\nabla {g}^\varepsilon(s,y)|\leq C,\\
 &\lim_{\varepsilon \downarrow 0}\int_{M}
 |\nabla g^{\varepsilon}(s,y)-\nabla g(s,y)|^p \dd y=0,\ \ s\in [0,\infty)
 \end{split}
 \end{equation}
for some constant $C>0$.

 Define $F^\varepsilon(\gamma):=f(\int_0^t g^\varepsilon(s,\gamma(s))\dd s)\in \F C_b^1$, then from \eqref{eq2.3} it is easy to obtain
  \begin{equation*}
  \begin{split}
  & DF^{\varepsilon}(\gamma)(s)=f'\left(\int_0^t g^\varepsilon(r,\gamma(r))\dd s\right)\cdot
  \Big(U_s^{-1}(\gamma)\nabla g^{\varepsilon}(s,\gamma(s))1_{(0,t]}(s)\Big),~~s\in[0,\infty).
  \end{split}
  \end{equation*}
Combining this and \eqref{eq2.15} we have
$$\sup_{\varepsilon\in (0,\varepsilon_0)}\E(F^\varepsilon,F^\varepsilon)<\infty,$$
and
$$\lim_{\varepsilon \downarrow 0}\mu^o_{\R^+}\Big(\big|F^\varepsilon(\gamma)-
F(\gamma)\big|^2\Big)=0.$$
By \cite[Chap. I Lemma 2.12]{MR92} we know that $F\in \D(\E^o_{\R^+})$.
Moreover, \eqref{eq2.15} ensures
\begin{align*}
&\lim_{\varepsilon \downarrow 0}DF^{\varepsilon}(\gamma)(s)=
f'\left(\int_0^t g(r,\gamma(r))\dd s\right)\cdot
  \Big(U_s^{-1}(\gamma)\nabla g(s,\gamma(s))1_{(0,t]}(s)\Big)
\end{align*}
for  $\dd s\times \mu^o_{\R^+}$-a.s.
  $(s,\gamma)\in [0,\infty)\times {\bf E}^o_{\R^+}(M)$.
Combining this with the dominated convergence theorem yields
\begin{equation*}
\lim_{\varepsilon \downarrow 0}
\int_{{\bf E}^o_{\R^+}(M)}\int_0^{\infty}\big|DF^{\varepsilon}(\gamma)(s)-\lim_{\varepsilon \downarrow 0}DF^{\varepsilon}(\gamma)(s)\big|^2\dd s \dd \mu^o_{\R^+}=0,
\end{equation*}
which implies \eqref{l2-8-1} immediately.

{\bf Step (ii)}
Now we consider the general case : $g\in C_{b,Lip}([0,\infty)\times M)$.
By the Greene-Wu approximation theorem in \cite{GWu}, there exists a smooth function $\eta: M \rightarrow \R^+$
such that for every $R>0$, $\{x\in M; \eta(x)\leq R\}$ is compact and $\sup_{x\in M}|\nabla \eta(x)|\leq C$. Choose $h_R:\mathbb{R}^+\rightarrow[0,1]$, $h_R\in C^\infty(\mathbb{R}^+)$ with
$$h_R(x)=1, \forall~x\in[0,R],~ h_R(x)=0, \forall~x>R+1, ~~\text{and}~~\|h_R'\|_{\infty}\leq 2.$$
For each $(s,x)\in [0,\infty)\times M$, define $g_R(s,x):=g(s,x)h_R(\eta(x)), F_R(\gamma):=f(\int_0^t g_R(s,\gamma(s))\dd s)$. Based on
the fact that  $\sup_{x\in M}|\nabla \eta(x)|\leq C$ it is easy to verify that $g_R(s,\cdot): M\rightarrow \R$
is Lipschitz continuous and with uniform compact support and with uniform Lipschitz constant.

From {\bf Step (i)} of the proof we know $F_R\in \D(\E^o_{\R^+})$ and
it is not difficult to show
\begin{align*}
\aligned
&\E^o_{\R^+}(F_R,F_R)\leq C\|f'\|^2_\infty\|\nabla g_R\|^2_\infty
\leq C\|f'\|^2_\infty(\|\nabla g\|_\infty+\|g\|_\infty)^2,\\
&\lim_{R \rightarrow \infty}\mu^o_{\R^+}\Big(\big|F_R(\gamma)-
F(\gamma)\big|^2\Big)=0,\\
&\lim_{R \rightarrow \infty}DF_R(\gamma)(s)=DF(\gamma)(s)\ \textrm{for}\ \dd s\times \mu^o_{\R^+}-a.s.
  (s,\gamma)\in [0,\infty)\times {\bf E}^o_{\R^+}(M).
  \endaligned
\end{align*}
Combining this with the same arguments as in {\bf Step (i)} we know $F \in \D(\E^o_{\R^+})$ with
$DF$ given by \eqref{l2-8-1}.

{\bf Step (iii)} By similar arguments as above we can easily check that for $F$ given as in \eqref{eq 3.1} with $g_i$ as in (1) the results in (1) follow.
Let $G_N(\gamma):=f\Big(\tilde d_N\big(\gamma,\sigma\big)\Big)$, where
$\tilde d_N\big(\gamma,\sigma\big):=\sum_{k=1}^N \frac{1}{2^k}\int_{k-1}^k
\tilde \rho\big(\gamma(s),\sigma(s)\big)\dd s$. Hence according to the conclusion in
{\bf Step (i),(ii)} we obtain $G_N\in \D(\E^o_{\R^+})$ and
\begin{equation*}
DG_N(\gamma)(s)=f'\Big(\tilde d_N\big(\gamma,\sigma\big)\Big)\cdot\Big(\sum_{k=1}^N \frac{1}{2^k} U_s^{-1}(\gamma)
\nabla_1 \tilde \rho\big(\gamma(s),\sigma(s)\big)1_{(k-1,k]}(s)\Big)
\end{equation*}
for $\dd s\times \mu^o_{\R^+}-a.s.
  (s,\gamma)\in [0,\infty)\times {\bf E}^o_{\R^+}(M).$
By this and the same arguments as  in {\bf Step (i)} (by the dominated convergence theorem) it is easy to prove
\begin{equation*}
\begin{split}
& \lim_{N \rightarrow \infty}\mu^o_{\R^+}\big(|G_N(\gamma)-G(\gamma)|^2\big)=0,\\
& \lim_{N \rightarrow \infty}\mu^o_{\R^+}\big(|DG_N(\gamma)-DG(\gamma)|^2_{\HH_+}\big)=0,
\end{split}
\end{equation*}
which implies $G \in \D(\E^o_{\R^+})$ and $DG$ has the expression \eqref{l2-8-2}.
\end{proof}

\beg{Remark}\label{Finite volume}{\bf (Finite Volume Case)} Let $\mu^o_T$ be the distribution of the Brownian motion starting from $o$ on $C([0,T];M)$.
Similar to the above argument,  we can obtain Theorems \ref{T2.1}-\ref{T2.4} and Lemma \ref{l2.7} hold with $\mu^o_{\R^+}$ be replaced by $\mu^o_{T}$. These extend the results in \cite[Section 2]{RWZZ17} to general Riemannian manifold.
\end{Remark}

\section{The case of whole line}\label{sect3}


Fix $o\in M$,
the  path space $W^{o}_{\mathbb{R}}(M)$ over $M$ is defined by
$$W^{o}_{\mathbb{R}}(M):=\{\gamma\in C(\mathbb{R};M):\gamma(0)=o\}.$$
Then $W^o_{\R}(M)$ is a separable metric space with respect to the distance $d_{\infty}$ as follows
\begin{equation}\label{e3-1}
d_{\infty}(\gamma,\sigma):=\sum_{n=1}^\infty \frac{1}{2^n}\sup_{s\in [-n,n]}\big(\tilde{\rho}(\gamma(s),\sigma(s))\big),\ \ \gamma,\sigma\in W^o_{\R}(M).
\end{equation}
where $\tilde{\rho}=\rho\wedge1$.
Similar as in Section 2, we define the following $L^1$-distance:
\begin{equation}\label{eq2.12-a}
\tilde{d}(\gamma,\eta):=
\sum_{k=1}^{\infty}\Big(\frac{1}{2^{k}}\int_{k-1}^k\tilde{\rho}(\gamma(s),\eta(s))\dd s+
\frac{1}{2^{k}}\int_{-k}^{-k+1}\tilde{\rho}(\gamma(s),\eta(s))\dd s\Big), \quad \gamma,\eta\in W_{\mathbb{R}}^{o}(M).
\end{equation}
 Obviously we have $\tilde d \le 2d_{\infty}$. Let ${\bf E}^o_{\R}(M)$ be the closure of  $W^o_{\R}(M)$
with respect to the distance $\tilde{d}$, then ${\bf E}^o_{\R}(M)$ is a Polish space.

Let $\bar{W}$ be an $n$-dimensional Brownian motion independent of $W$ and let $\bar{U}$ be the solution to \eqref{eq2.1} with $W$ replaced by $\bar{W}$. Set $\bar{x}_t:=\pi (\bar{U})$.
Then  $\bar{x}_\cdot$  is a Brownian motion with initial point $o$ on $M$ , and independent of $x$.
Define $$\hat{x}_t:=\left\{ \begin{array}{ll} x_t, \qquad &t \geq0\\
\bar{x}_{-t}, &t <0\end{array}\right..$$

Denote by $\mu^o_{\R}$ the distribution of $\hat{x}$ on $W^{o}_\mathbb{R}(M)$, then $\mu^o_{\R}$
is also a probability measure on ${\bf E}^o_{\R}(M)$ whose support is contained in
$W^{o}_{\mathbb{R}}(M)$.
Moreover, we can easily check that $\mu^o_{\R}$ is the unique probability measure such that for $F(\gamma)=f\big(\gamma
(-\bar{t}_k),...,\gamma(-\bar{t}_1),\gamma(t_1),...,\gamma(t_m)\big)$, $f\in C_b(M^{k+m})$,
 $$\aligned\int_{{\bf E}^o_{\R}(M)} F(\gamma)\dd \mu^o_{\R}=&\int_{M^{k+m}} \prod_{i=1}^kp_{\Delta_i \bar{t}}(\bar{y}_{i-1},\bar{y}_i)\prod_{i=1}^mp_{\Delta_i t}(y_{i-1},y_i)\\&f(\bar{y}_k,...,\bar{y}_1,y_1,...,y_m)\dd\bar{y}_1...\dd\bar{y}_k\dd y_1...\dd y_m,\quad y_0=\bar{y}_0=o,\endaligned$$
 where 
 $p_t$ is the heat kernel corresponding to $\frac{1}{2}\Delta$ and $-\bar{t}_k<...<-\bar{t}_1<\bar{t}_0=0=t_0<t_1<...<t_m$, $\Delta_it=t_i-t_{i-1}$ and $\Delta_i\bar{t}=\bar{t}_i-\bar{t}_{i-1}$.

 Similar to  Section 2, in order to construct Dirichlet forms associated to stochastic heat equations in Riemannian path space,
we consider
the collection $\F C_b$ of all cylinder functions on ${\bf E}^o_{\R}(M)$ as follows:
for every $F\in \F C_b$, there exist some $ ~m, k\in \mathbb{N},~ f\in C_b^{1}(\mathbb{R}^{m+k}), g_i\in C_b^{0,1}([0,\infty)\times M),\bar{g}_j\in C_b^{0,1}((-\infty,0]\times M),  T_i, \bar{T}_j\in[0,\infty)$, $i=1,...,m$, $j=1,...,k$,
such that
\begin{equation}\label{eq3.01}\aligned
F(\gamma)=f\left(\int_0^{T_1}  g_1(s,\gamma(s)) \dd s,...,\int_0^{T_m}   g_m(s,\gamma(s)) \dd s, \int_{-\bar{T}_1}^0
\bar{g}_1(s,\gamma(s)) \dd s,...,\int_{-\bar{T}_k}^0   \bar{g}_k(s,\gamma(s)) \dd s\right).
\endaligned\end{equation}
For $\gamma\in {\bf E}^o_{\R}(M)$, define $\tilde{\gamma}(s):=\gamma(s), s\geq0$ and $\bar{\gamma}(s):=\gamma(-s), s\geq0$ respectively, then $\tilde{\gamma}, \bar{\gamma}\in {\bf E}^o_{\R^+}(M)$.
Thus we could decompose $\gamma=(\tilde \gamma, \bar{\gamma})$, in particular,  under ${\mu}^o_{\R}$,  $\tilde{\gamma}(\cdot)$ and $\bar{\gamma}(\cdot)$ are two independent Brownian motions on $M$. We also define
\begin{equation}\label{eq3.01-1}
U_s(\gamma):=\left\{ \begin{array}{ll} U_s(\tilde{\gamma}), \qquad &s \geq0\\
 U_{-s}(\bar{\gamma}), &s <0,\end{array}\right.
 \end{equation}
where $U_s(\tilde{\gamma}):\R^n \rightarrow T_{\tilde \gamma(s)}M$ is
the stochastic horizontal lift along $\tilde \gamma(\cdot)$ defined via \eqref{eq2.1}.
By the above argument, for $F \in \F C_b$ with form \eqref{eq3.01} we have
\begin{equation}\label{eq3.02}\aligned
&\int_{{\bf E}^o_{\R}(M)} F(\gamma )\dd{\mu}^o_{\R}\\
&=\int_{{\bf E}^o_{\R^+}(M)} \int_{{\bf E}^o_{\R^+}(M)} f\bigg(\int_0^{T_1}  g_1(s,\tilde{\gamma}(s)) \dd s,...,\int_0^{T_m}   g_m(s,\tilde{\gamma}(s)) \dd s,\\
   &~~~~~~~~\int_{-\bar{T}_1}^0  \bar{g}_1(s,\bar{\gamma}(-s)) \dd s,...,\int_{-\bar{T}_k}^0   \bar g_k(s,\bar{\gamma}(-s)) \dd s\bigg)\dd{\mu}^o_{\R^+}(\tilde{\gamma})\dd\mu^o_{\R^+}(\bar{\gamma}),\endaligned\end{equation}
where $\mu^o_{\R^+}$ is 
introduced in Section 2.
It is easy to see that $\F C_b$ is dense in $L^2({\bf E}^o_{\R}(M);\mu^o_{\R})$.

Set:$$\HH:=L^2(\R\rightarrow \R^n;\dd s)=\left\{h: \R \rightarrow \R^n;
\int_{-\infty}^{\infty}|h(s)|^2\dd s<\infty\right\}.$$
For every $h\in \HH$ and each $F\in \F C_b$ of the form \eqref{eq3.01}, the directional derivative of $F$ with respect to $h$ is ($\mu^o_\R$-a.s.) given by
\begin{equation}\label{eq3.03}\aligned D_hF(\gamma)=&\sum_{j=1}^m\hat{\partial}_jf(\gamma)\int_0^{T_j}
 \left\langle U_s^{-1}(\gamma)\nabla g_j(s,\gamma(s)),h(s)
 \right\rangle \dd s\\&+\sum_{j=1}^{k}\hat{\partial}_{m+j}f(\gamma)\int_{-\bar{T}_j}^0
 \left\langle U_s^{-1}(\gamma)\nabla \bar{g}_j(s,\gamma(s)),h(s)
 \right\rangle \dd s,\quad \gamma\in {\bf E}^o_{\R}(M),\ h\in \HH,\endaligned\end{equation}
where
$$\hat{\partial}_jf(\gamma):=\partial_jf\left(\int_0^{T_1}  g_1(s,\gamma(s)) \dd s,...,\int_0^{T_m}   g_m(s,\gamma(s)) \dd s, \int_{-\bar{T}_1}^0  \bar{g}_1(s,\gamma(s)) \dd s,...,\int_{-\bar{T}_k}^0  \bar g_k(s,\gamma(s)) \dd s\right),$$
$U_s(\gamma)$ is defined by \eqref{eq3.01-1}, and $\nabla g_j$ denotes the gradient w.r.t. the second variable.
By the Riesz representation theorem,
there exists a gradient operator $DF(\gamma)\in \HH$ such that $\langle DF(\gamma),h\rangle_{\HH}=D_hF(\gamma)$ for every
$h\in \HH$. In particular, for the above $F$, $\gamma\in W^o_\R(M)$
\begin{equation}\label{eq2.03}\aligned
DF(\gamma)(s)=&\sum_{j=1}^m\hat{\partial}_jf(\gamma)U_s^{-1}(\gamma)\nabla g_j(s,\gamma(s))1_{(0,T_j] }(s)\\
&+\sum_{j=1}^n\hat{\partial}_{j+m}f(\gamma)U_s^{-1}(\gamma)\nabla \bar{g}_j(s,\gamma(s))1_{[-\bar{T}_j,0) }(s).
\endaligned\end{equation}

Set
$$\H^{\infty}:=\left\{h\in C_c^1(\mathbb{R};\mathbb{R}^n)\Big| h(0)=0, 
\int_{\mathbb{R}}|h'(s)|^{2}\dd s<\infty\right\}.$$
Fix a sequence of elements $\{h_k\}\subset \H^{\infty}$ such that it is an orthonormal basis in $\HH$, we define
the following symmetric quadratic form
$$\E^o_{\R}(F,G):=\frac{1}{2}\int_{{\bf E}^o_{\R}(M)}
\langle DF, DG\rangle_{\HH}\dd\mu^o_{\R}
; \quad F,G\in\F C_b.$$


\beg{Remark}  We deduce the integration by parts formula  by using the above stochastic horizontal lift $U$ below. There are other ways to define the stochastic horizontal lift such that it is adapted to the filtration generated by $\gamma$. However, as mentioned in Section 2, the $L^2$-Dirichlet form is independent of the stochastic horizontal lift, which can be seen as  a tool to obtain the integration by parts formula and the closablity of the associated bilinear form.

\end{Remark}

Set $\tilde{\beta}_{\cdot}$, $\bar{\beta}_{\cdot}$ as the anti-development of $\tilde \gamma$ and $\bar{\gamma}$ respectively
  (whose distribution under $\mu^o_{\R}$ are two independent $\R^n$-valued Brownian motions).
Let $l_{m,T}:[0,\infty)\times  W^o_{\R^+}(M)\rightarrow [0,1]$ be the vector fields constructed in
Lemma \ref{l2.5} and we define $\hat l_{m,T}:\R\times W^o_{\R}(M)\rightarrow [0,1]$ as follows,
\begin{equation}\label{eq2.05-1}
\hat l_{m,T}(t,\gamma)=
\begin{cases}
&l_{m,T}(t,\tilde \gamma),\ \ \ \ t\in [0,\infty),\\
&l_{m,T}(-t,\bar{\gamma}),\ \ \ t\in (-\infty,0).
\end{cases}
\end{equation}

\begin{prp}\label{p3.1}For each $F\in \F C_b$ and $h\in \H^\infty$, and for each $\hat l_{m,T}$ defined by \eqref{eq2.05-1}, we have
\begin{equation}\label{eq2.04}\aligned&
\int_{{\bf E}_\mathbb{R}^o(M)} \langle DF, \hat l_{m,T}h\rangle_{\HH} \dd \mu^o_{\R}=
\int_{{\bf E}^o_{\R}(M)} F\Theta_h^{m,T} \dd \mu^o_{\R},
\endaligned\end{equation}
where
\begin{equation}\label{eq2.04-1}
\begin{split}
\quad \quad \Theta_h^{m,T}(\gamma)=\Theta_h^{m,T}(\tilde \gamma, \bar \gamma)&=\int_0^{+\infty}
 \left\langle \frac{1}{2}\Ric_{U_s(\tilde{\gamma})}h_{m,T}(s,\tilde \gamma)+h_{m,T}'(s,\tilde \gamma),\dd \tilde{\beta}_s
 \right\rangle\\
&~~+\int_0^{+\infty}
 \left\langle \frac{1}{2}\Ric_{U_s(\bar{\gamma})}h_{m,T}(s,\bar \gamma)+h_{m,T}'(s,\bar \gamma),\dd \bar{\beta}_s
 \right\rangle.
\end{split}
\end{equation}
Here $h_{m,T}(s,\tilde\gamma):=h(s)l_{m,T}(s,\tilde \gamma)$ and
  $h_{m,T}(s,\bar{\gamma}):=h(-s)l_{m,T}(s,\bar{\gamma})$ for all $s\in [0,\infty)$.
\end{prp}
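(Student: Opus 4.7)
The plan is to reduce the two-sided integration by parts formula to two independent applications of the half-line version (Lemma~\ref{l2.7}), exploiting the product structure of $\mu^o_{\R}$ displayed in \eqref{eq3.02}. The key observation is that under $\mu^o_{\R}$ the decomposition $\gamma \mapsto (\tilde\gamma,\bar\gamma)$ identifies $({\bf E}^o_{\R}(M),\mu^o_{\R})$ with $({\bf E}^o_{\R^+}(M)\times{\bf E}^o_{\R^+}(M),\mu^o_{\R^+}\otimes\mu^o_{\R^+})$, and by \eqref{eq3.01-1} the stochastic horizontal lift $U_s(\gamma)$ on $\R^+$ (resp. $\R^-$) depends only on $\tilde\gamma$ (resp. on $\bar\gamma$). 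Thus both the cylinder function $F$ and the directional derivative $D_hF$ split cleanly into a ``$+$-part'' that is measurable in $\tilde\gamma$ only and a ``$-$-part'' measurable in $\bar\gamma$ only (up to a coupling through $f$ and its partial derivatives).

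First, I would write any $F \in \F C_b$ of the form \eqref{eq3.01} as $F(\gamma) = \tilde F(\tilde\gamma,\bar\gamma)$, where on the negative-time arguments I perform the change of variable $s = -u$ so that $\int_{-\bar T_j}^{0}\bar g_j(s,\gamma(s))\dd s = \int_0^{\bar T_j}\bar g_j(-u,\bar\gamma(u))\dd u$. After this change of variable the function $\bar\gamma \mapsto \tilde F(\tilde\gamma,\bar\gamma)$ (with $\tilde\gamma$ frozen) lies in the half-line class $\F C_b^1$ appearing in Lemma~\ref{l2.7}, because the time-reversed densities $(u,x)\mapsto \bar g_j(-u,x)$ still belong to $C_b^{0,1}([0,\infty)\times M)$. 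Simultaneously I would decompose $h = h_+ + h_-$, define $\bar h(u):=h(-u)$ for $u\geq 0$, and observe that $\bar h \in \H^\infty_+$ whenever $h\in \H^\infty$, with $\bar h(0)=h(0)=0$. The same change of variable turns the negative-time part of $\langle DF,\hat l_{m,T}h\rangle_{\HH}$ into a pairing in $\HH_+$ with $l_{m,T}(\cdot,\bar\gamma)\bar h(\cdot)$, i.e., with the quantity $h_{m,T}(\cdot,\bar\gamma)$ defined in the statement.

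Next, applying Fubini with respect to $\mu^o_{\R^+}\otimes\mu^o_{\R^+}$, I freeze $\bar\gamma$ and apply Lemma~\ref{l2.7} to $\tilde F(\cdot,\bar\gamma) \in \F C_b^1$ on $W^o_{\R^+}(M)$ with test vector $h_{m,T}(\cdot,\tilde\gamma) = l_{m,T}(\cdot,\tilde\gamma)h(\cdot)$; this converts the $+$-part of the left-hand side into $\mu^o_{\R}\bigl(F\int_0^\infty\langle h_{m,T}'(s,\tilde\gamma)+\tfrac12\Ric_{U_s(\tilde\gamma)}h_{m,T}(s,\tilde\gamma),\dd\tilde\beta_s\rangle\bigr)$. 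Symmetrically, freezing $\tilde\gamma$ and applying Lemma~\ref{l2.7} to $\tilde F(\tilde\gamma,\cdot)$ with test vector $h_{m,T}(\cdot,\bar\gamma)$ yields the analogous expression driven by $\bar\beta$. Summing the two contributions produces $\mu^o_{\R}(F\Theta_h^{m,T})$, which is exactly \eqref{eq2.04}.

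The main technical point to be careful with is verifying that after freezing one of the two independent paths, the conditional function really lies in $\F C_b^1$ on the half-line (i.e. the coefficients $g_j, \bar g_j$ and the outer function $f(\cdot,\mathrm{const})$ meet the regularity assumptions of Lemma~\ref{l2.7}), and that Fubini can be applied to the stochastic integrals — this is fine because the supports of $h_\pm$ are bounded, $l_{m,T}$ is uniformly bounded by $1$, and Lemma~\ref{l2.5}(3) gives an $L^p(\dd s\otimes \mu^o_{\R^+})$ bound on $l_{m,T}'$, so both $\Theta_h^{m,T}$ and $\langle DF,\hat l_{m,T}h\rangle_{\HH}$ sit in $L^2(\mu^o_{\R})$. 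The other minor bookkeeping issue is the sign arising from the change of variable $s=-u$, which cancels because $\bar h'(u)=-h'(-u)$ is paired against $\dd u$ rather than $\dd s$; everything else carries through verbatim from the half-line case.
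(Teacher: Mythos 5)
Your proposal is correct and follows essentially the same route as the paper: the paper likewise splits $\langle DF,\hat l_{m,T}h\rangle_{\HH}$ into the positive-time and negative-time contributions, uses the product identification of $\mu^o_{\R}$ with $\mu^o_{\R^+}\otimes\mu^o_{\R^+}$ from \eqref{eq3.02}, and applies the half-line formula \eqref{eq2.8} of Lemma \ref{l2.7} to each factor with the other path frozen, summing the two resulting stochastic-integral terms to obtain $\Theta_h^{m,T}$. Your additional remarks on the change of variable $s=-u$, the membership of $\bar h$ in $\H^\infty_+$, and the $L^2$ integrability needed for Fubini are exactly the bookkeeping the paper leaves implicit.
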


\begin{proof}By \eqref{eq3.02}, \eqref{eq3.03} we have
\begin{equation}\label{eq2.05}
\begin{split}
&\int_{{\bf E}_\mathbb{R}^o(M)} \langle DF,\hat l_{m,T}h\rangle_{\HH}\dd\mu^o_{\R}\\
&=\int_{{\bf E}^{o}_{\mathbb{R}^+}(M)}\int_{{\bf E}^{o}_{\mathbb{R}^+}(M)}\sum_{j=1}^m\hat{\partial}_jf(\tilde{\gamma},\bar{\gamma})\int_0^{T_j}
 \left\langle U_s^{-1}(\tilde{\gamma})\nabla g_j(s,\tilde{\gamma}(s)),l_{m,T}(s,\tilde \gamma)h(s)
 \right\rangle \dd s\dd{\mu}^o_{\R^+}(\tilde{\gamma})\dd\mu^o_{\R^+}(\bar{\gamma})\\
 &+\int_{{\bf E}^{o}_{\mathbb{R}^+}(M)} \int_{{\bf E}^{o}_{\mathbb{R}^+}(M)}\sum_{j=1}^{k}
 \hat{\partial}_{m+j}f(\tilde{\gamma},\bar{\gamma})\int_0^{\bar{T}_j}
 \left\langle U_{s}^{-1}(\bar{\gamma})\nabla \bar{g}_j(-s,\bar{\gamma}(s)),l_{m,T}(s,\bar{\gamma})h(-s)
 \right\rangle \dd s\dd{\mu}^o_{\R^+}(\tilde{\gamma})\dd\mu^o_{\R^+}(\bar{\gamma})\\&:=I+II,
 \end{split}
 \end{equation}
where $\hat{\partial}_jf(\tilde{\gamma},\bar{\gamma}):=\hat{\partial}_jf({\gamma})$.

According to \eqref{eq2.8} of Lemma \ref{l2.7}, we get
$$I=\int_{{\bf E}^{o}_{\mathbb{R}^+}(M)} \int_{{\bf E}^{o}_{\mathbb{R}^+}(M)}F(\tilde{\gamma},\bar{\gamma})\int_0^{+\infty}
 \left\langle \frac{1}{2}\Ric_{U_s(\tilde{\gamma})}h_{m,T}(s,\tilde \gamma)+h_{m,T}'(s,\tilde \gamma),\dd \tilde{\beta}_s
 \right\rangle \dd{\mu}^o_{\R^+}(\tilde{\gamma})\dd\mu^o_{\R^+}(\bar{\gamma}),$$
 $$II=\int_{{\bf E}^{o}_{\mathbb{R}^+}(M)} \int_{{\bf E}^{o}_{\mathbb{R}^+}(M)}F(\tilde{\gamma},\bar{\gamma})\int_0^{+\infty}
 \left\langle \frac{1}{2}\Ric_{U_s(\bar{\gamma})}h_{m,T}(s,\bar \gamma)+h_{m,T}'(s,\bar \gamma),\dd \bar{\beta}_s
 \right\rangle \dd {\mu}^o_{\R^+}(\tilde{\gamma})\dd {\mu}^o_{\R^+}(\bar{\gamma}),$$
where $F(\tilde{\gamma},\bar{\gamma})=F(\gamma)$.  Combining this and \eqref{eq2.05}, we finish the proof.

\end{proof}

Similar to the arguments as in the proof of Theorem \ref{T2.1} and based on the above integration by parts formula \eqref{eq2.04}, we  obtain the following:

\beg{thm}\label{T 3.1}  The quadratic form $(\E^o_{\R}, \F C_b)$
is closable and its closure $(\E^o_{\R},\D(\E^o_{\R}))$ is a quasi-regular Dirichlet form on $L^2({\bf E}^o_{\R}(M);\mu^o_{\R})$.
\end{thm}

\begin{proof}
{\bf$(a)$ Closablity:}
$(I)$
Suppose that $\{F_k\}_{k=1}^{\infty}\subseteq \F C_b$ is a sequence of cylinder functions with
\begin{equation}\label{eq2.06}
\begin{split}
\lim_{m \rightarrow \infty}\mu^o_{\R}\left( F_m^2\right)=0,\ \
\lim_{k,m \rightarrow \infty}\E^o_{\R}\left(F_k-F_m,F_k-F_m\right)=0.
\end{split}
\end{equation}
Thus $\{D F_m\}_{m=1}^{\infty}$ is a Cauchy sequence in
$L^2\left({\bf E}^o_{\R}(M)\rightarrow \HH;\mu^o_{\R}\right)$ for which there exists a limit $\Phi$. It suffices to prove that $\Phi=0$.
Given an orthonormal basis $\{h_k\}_{k=1}^{\infty}\subset  C^\infty_c(\mathbb{R};\mathbb{R}^n)\cap \H^\infty$ of $\HH$, by the integration by parts formula \eqref{eq2.04},
for every $G \in \F C_b, h_k$ and $k,i,m,T\in \mathbb{N}^+$  we have
\begin{equation}\label{eq2.07}
\begin{split}
\mu^o_{\R}\left(\langle D F_i, \hat l_{m,T}h_k\rangle_{\HH}G \right)&=\mu^o_{\R}\left(\langle D \left(F_i G\right),\hat l_{m,T}h_k\rangle_{\HH}\right)
-\mu^o_{\R}\left(\langle D G, \hat l_{m,T}h_k\rangle_{\HH}F_i\right)\\
&=\mu_{\R}^o\left(F_i G \Theta_{h_k}^{m,T}\right)-\mu^o_{\R}\left(\langle D G, \hat l_{m,T}h_k\rangle_{\HH}F_i\right).
\end{split}
\end{equation}
Since $G$ and $D G$ are bounded and $\Theta_{h_k}^{m,T}\in L^2({\bf E}_{\R}^o(M);\mu^o_{\R})$
(due to \eqref{eq2.5} and the fact $h_k\in C_c^1(\R;\R^d)$), by \eqref{eq2.06} we could take the limit
$i \rightarrow \infty$ under the integral in \eqref{eq2.07} to conclude
\begin{equation*}
\begin{split}
&\mu^o_{\R}\left(\langle \Phi, \hat l_{m,T}h_k\rangle_{\HH}G\right)=0,\quad \forall\
G \in \F C_b,\ k,m,T\in \mathbb{N}^+,
\end{split}
\end{equation*}
therefore we could find a $\mu^o_{\R}$-null set $\Delta_k \subset W_{\R}^o(M)$, such that
\begin{equation}\label{eq2.12-1}
\langle \Phi(\gamma), \hat l_{m,T}(\gamma)h_k\rangle_{\HH}=0,\quad \ \forall\ m,T\in \mathbb{Z}_+,\ \gamma \notin \Delta_k.
\end{equation}

For a fixed $h_k$, we could find a $T_k\in \mathbb{N}^+$ (which may depend on $h_k$) satisfying
$\supp (h_k)\subset [-T_k,T_k]$. Since the coordinate process $\gamma(\cdot)$ is non-explosive,
for every $\gamma \notin \Delta_0$ with some $\mu^o_{\R}$-null set $\Delta_0$, there exists $m_k(\gamma)\in \mathbb{Z}_+$, such that
$\tilde \gamma(t)\in D_{m_k-1}$ and $\bar \gamma(t)\in D_{m_k-1}$ for all $t \in [0,T_k]$,
hence $\hat l_{m_k,T_k}(t,\gamma)=1$ for all $t \in [-T_k,T_k]$. Here $D_{m_k-1}$ is defined in \eqref{cut}.
Combining this with \eqref{eq2.12-1} we know
$$\langle \Phi(\gamma), h_k\rangle_{\HH}=0,\quad  k\ge 1,
\gamma \notin \Delta_0\cup\Delta_k,$$
which implies that
$\Phi(\gamma)=0$, $\forall\ \gamma \notin \Delta:=\cup_{k=0}^{\infty}\Delta_k$. So $\Phi=0$, a.s., and
$(\E_{\R^+}^o,\F C_b)$ is closable.
By standard procedure, it is not difficult to show that its closure $(\E^o_{\R},\D(\E^o_{\R}))$ is a Dirichlet form.


{\bf$(b)$  Quasi-Regularity:}

In order to prove the quasi-regularity, we need to verify conditions (i)-(iii) in \cite[Definition IV-3.1]{MR92}.
By the same arguments as in the proof of Theorem \ref{T2.1}, we could check (ii) and (iii) of
\cite[Definition IV-3.1]{MR92} for $(\E^o_{\R},\D(\E^o_{\R}))$, so we omit the proof here.


Since the metric space $({\bf E}^o_{\R}(M); \tilde d)$ is separable,
we can choose a fixed countable dense subset $\{\xi_m|m\in\mathbb{N}^+\}\subset W^o_{\R}(M)$.
Let $\varphi\in C_b^\infty(\mathbb{R})$ be an increasing function  satisfying with
$$\varphi(t)=t,\quad \forall~t\in[-1,1]~~\text{and}~~\|\varphi'\|_{\infty}\leq 1.$$

For each $m\geq1$, the function $v_m:{\bf E}^o_{\R}(M)\rightarrow\mathbb{R}$ is given by
$$v_m(\gamma)=\varphi(\tilde{d}(\gamma,\xi_m)),\quad\gamma\in {\bf E}^o_{\R}(M)
,$$
where $\tilde d$ is defined by \eqref{eq2.12-a}.
According to the same procedures as in the proof of Lemma \ref{l2.8} we have $v_m\in \D(\E^o_{\R})$ and
\begin{equation*}
\begin{split}
Dv_m(\gamma)(s)=\varphi'(\tilde d(\gamma,\xi_m))\cdot\Bigg(&\sum_{k=1}^{\infty}\frac{1}{2^k}\Big(
 U_s^{-1}(\tilde \gamma)\nabla_1 \tilde{\rho}(\tilde \gamma(s),\xi_m(s))1_{(k-1,k]}(s)\\
&+U_{-s}^{-1}(\bar \gamma)\nabla_1 \tilde{\rho}(\bar \gamma(-s),\xi_m(s))1_{[-k,-k+1)}(s)\Big)\Bigg)
\end{split}
\end{equation*}
for $\dd s\times \mu^o_{\R}-a.s.
  (s,\gamma)\in \R\times {\bf E}^o_{\R}(M),$
where $\nabla_1 \tilde \rho(\cdot,x)$ denotes the gradient with respect to the first variable of
$\tilde \rho(\cdot,\cdot)$.
By such expression we arrive at
\begin{equation*}
\sup_{m\ge 1}\E^o_{\R}(v_m,v_m)<\infty.
\end{equation*}
Then based on this and repeating the arguments as in the proof of Theorem \ref{T2.1} we can show
\begin{equation}\label{eq2.13}w_k:=\inf_{m\leq k}v_m, k\in \mathbb{N}^+, \textrm{ converges } \E^o_{\R} -\text{quasi-uniformly to zero on} ~{\bf E}^o_{\R}(M),
\end{equation}
therefore the capacity associated with $(\E^o_{\R},\D(\E^o_{\R}))$ is tight. So (i) of
\cite[Definition IV-3.1]{MR92} holds. By now we have finished the proof.

\end{proof}

\begin{Remark}By the theory of the Dirichlet form,  for  the case of the whole line, we also derive similarly Theorems \ref{T2.2} and \ref{T2.4} in Section 2.
\end{Remark}


As explained in the introduction, the invariant measure for the stochastic heat equation on the whole line could be the distribution of a two-sided Brownian motion with a  shift given by Lebesgue measure,
which may not be finite measure. 
 So in our setting it is also natural to consider the reference measure given by $\int_M \mu^{x}_{\R}(\dd \gamma)
\nu(\dd x)$ with some Randon measure $\nu$
(which may not be finite measure).  The support of the measure is the paths on $M$ with initial point not fixed.

Let $W_{\mathbb{R}}(M):=C(\mathbb{R};M)$ be the free path space, then $(W_{\R}(M),d_{\infty})$ is also a
separable metric space with $d_{\infty}$ defined by \eqref{e3-1}. Let $\tilde d$ be the $L^1$-distance defined
by \eqref{eq2.12-a}, and let ${\bf E}_{\R}(M)$ be the closure of $W_{\R}(M)$ under $\tilde d$. It is easy to see that
${\bf E}_{\R}(M)$ is a Polish space.

For any fixed Radon measure $\nu$ (not necessarily finite) on $M$, we could introduce a
measure (not necessarily finite) $\mu^{\nu}_{\R}(\dd \gamma):=\int_M \mu^{x}_{\R}(\dd \gamma)
\nu(\dd x)$ on ${\bf E}_{\R}(M)$, where $\mu^x_{\R}$ is the probability measure defined as $\mu^o_{\R}$ with $o$ replaced by $x$. Then we  have that for  $F(\gamma)=f\big(\gamma
(-\bar{t}_k),...,\gamma(-\bar{t}_1),\gamma(t_0),\gamma(t_1),...,\gamma(t_m)\big)$ with $f\in C_c(M^{k+m+1})$,  it holds
 \begin{equation}\label{e 3-1}
 \begin{split}
 \int_{{\bf E}_{\R}(M)} F(\gamma)\dd \mu^{\nu}_{\R}
 =&\int \prod_{i=1}^kp_{\Delta_i \bar{t}}(\bar{y}_{i-1},\bar{y}_i)\prod_{i=1}^mp_{\Delta_i t}(y_{i-1},y_i)\\&f(\bar{y}_k,...,\bar{y}_1,y_0,y_1,...,y_m)\dd\bar{y}_1...\dd\bar{y}_k\dd y_1...\dd y_m \nu(\dd y_0),
\end{split}
\end{equation}
where the variable $y_0=\bar{y}_0$ and $p_t$ is the heat kernel corresponding to $\frac{1}{2}\Delta$ and $-\bar{t}_k<...<-\bar{t}_1<\bar{t}_0=0=t_0<t_1<...<t_m$, $\Delta_it=t_i-t_{i-1}$ and $\Delta_i\bar{t}=\bar{t}_i-\bar{t}_{i-1}$. Here $C_c(M^{k+m+1})$ denote continuous  functions on $M^{k+m+1}$ with compact support.
\begin{Remark}
When $M$ is compact and $\nu$ is the normalized volume measure, then 
$\mu^{\nu}_{\R}$ corresponds to  the distribution
of stationary $M$-valued Brownian motion. 
In the case that $\nu$ is given by the volume measure, the Markov process we construct below corresponds to stochastic heat equation on $\mathbb{R}$ with values in $M$ without any boundary conditions.
\end{Remark}

\begin{Remark}
If $\nu$ is the volume measure ($M$ could be either compact or non-compact), then by expression \eqref{e 3-1} we know that
$\theta_s^{\sharp}\mu_{\R}^\nu=\mu_{\R}^\nu$ for any $s\in \R$, where $\theta_s^{\sharp}\mu_{\R}^\nu$
denotes the push forward measure for $\mu_{\R}^\nu$ by the map $\theta_s:{\bf E}_{\R}(M) \rightarrow  {\bf E}_{\R}(M)$
as $\theta_s(\gamma)(t):=\gamma(t+s)$. This means that $\mu_{\R}^\nu$ is invariant under any translation on $\R$.
\end{Remark}

\begin{Remark}\label{l o}
In \cite{BGHZ19, Hai16}, the authors studied \eqref{eq1.1} with  solutions taking values in free loop space
${\bf L}(M):=\{\gamma\in C([0,1];M); \gamma(0)=\gamma(1)\}$. In this case we could also construct the $\mathbf{L^2}$-Dirichlet form $(\tilde \E_{\R}^\nu, \mathscr{D}(\tilde \E_{R}^\nu))$ as follows
\begin{equation}\label{e 3-2}
\begin{split}
\tilde \E_{\R}^\nu(F,F)&:=\frac{1}{2}\int_{{\bf L}(M)}\langle DF,DF\rangle_{\HH}\tilde \mu^{\nu}(\dd \gamma)\\
&:=\frac{1}{2}\int_M\int_{{\bf L}_x(M)}\langle DF,DF\rangle_{\HH}\tilde \mu^x(\dd\gamma)\nu(\dd x),
\end{split}
\end{equation}
where ${\bf L}_x(M):=\{\gamma\in C([0,1];M); \gamma(0)=\gamma(1)=x\}$, $\tilde \mu^x$ denotes the Brownian bridge
measure on ${\bf L}_x(M)$.

For the free loop measure $\tilde \mu^{\nu}$ above, if $\nu(\dd x)=p_1(x,x)\dd x$, then $\tilde \mu^\nu$ is invariant under
any rotation on $S^1$. In \cite{RWZZ17}, the quasi-regularity of $(\tilde \E_{\R}^\nu, \mathscr{D}(\tilde \E_{R}^\nu))$ has been
studied under the assumption that $M$ is compact. As explained in Remarks \ref{r4-3} and \ref{loop}, by the method of this paper, we could also
obtain the corresponding results for the case that $M$ is non-compact.

The state space ${\bf L}(M)$ corresponds to the spatial variable with values in finite volume, while ${\bf E}_{\R}(M)$ and ${\bf E}_{\R}^o(M)$
correspond to the case that the spatial variable in infinite volume.
\end{Remark}

Here we only consider the case that $\nu$ is an infinite measure,  since when $\nu$ is a finite measure, the case is simpler and it may be handled similar as in Theorem \ref{T3.1}.

Next, we assume that $\nu$ is infinite,  then $\mu^{\nu}_{\R}$ is also an infinite measure on ${\bf E}_{\R}(M)$ with support contained in $W_{\R}(M)$.
In this case $1\notin L^2(\mu^\nu_\R)$ and
 we need to introduce a new class of cut-off functions ${\bf E}_{\R}(M)$. Let $\F C_{Lip}$ be the space of bounded Lipschitz continuous functions on ${\bf E}_{\R}(M)$, i.e. for every $F\in \F C_{Lip}$, there exist some $ ~m, k\in \mathbb{N},~ f\in C_{b}^{1}(\mathbb{R}^{m+k}), g_i\in C_{Lip}^{0,1}([0,\infty)\times M),\bar{g}_j\in C_{Lip}^{0,1}((-\infty,0]\times M),  T_i, \bar{T}_j\in[0,\infty)$, $i=1,...,m$, $j=1,...,k$,
such that
\begin{equation}\label{e4-1}
\aligned
F(\gamma)=f\left(\int_0^{T_1}  g_1(s,\gamma(s)) \dd s,...,\int_0^{T_m}   g_m(s,\gamma(s)) \dd s, \int_{-\bar{T}_1}^0
\bar{g}_1(s,\gamma(s)) \dd s,...,\int_{-\bar{T}_k}^0   \bar g_k(s,\gamma(s)) \dd s\right),
\endaligned
\end{equation}
where $C_{Lip}^{0,1}([0,\infty)\times M)$ denotes the collection of functions
$g:[0,\infty)\times M\rightarrow \R$ such that $g$ is continuous on $[0,\infty)$ and Lipschitz
continuous (not necessarily bounded) on $M$ with the associated Lipschitz constants independent of $s\in [0,\infty)$.
Now we fix a point $o\in M$.
Let
\begin{equation*}
\begin{split}
\F C_{c}:=&\Big\{F\in \F C_{Lip}; \text{there\ exists}\ R>0\ \text{such\ that}\
F(\gamma)=0\ \text{for\ all}\\
&\ \gamma\in {\bf E}_{\R}(M)\ \text{satisfying}\ \int_0^1 \rho\big(o,\gamma(s)\big)\dd s>R\ \Big\}.
\end{split}
\end{equation*}


\begin{lem}\label{l4-1}
Suppose that for every $R>0$, 
 it holds
\begin{equation}\label{l4-1-1}
\int_M \mu_{\R^+}^x\Big(\sup_{s\in [0,1]}\rho\big(x,\gamma(s)\big)>\rho(o,x)-R\Big)\nu(\dd x)<\infty,
\end{equation}
then $\F C_c$ is a dense subset of $L^2({\bf E}_{\R}(M); \mu^\nu_{\R})$.
\end{lem}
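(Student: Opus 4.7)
The plan is to prove the lemma in two parts: first verifying $\F C_c \subset L^2(\mu^\nu_\R)$ (so that the density statement is meaningful), then showing density through a cut-off followed by a standard smoothing argument.

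For the integrability, let $F \in \F C_c$, so $|F| \le \|f\|_\infty$ and $F$ vanishes outside $A_R := \{\gamma : \int_0^1 \rho(o,\gamma(s))\,ds \le R\}$ for some $R > 0$. I would combine the reverse triangle inequality $\rho(o,\gamma(s)) \ge \rho(o,\gamma(0)) - \rho(\gamma(0),\gamma(s))$ with integration over $s \in [0,1]$ to obtain the inclusion
$$A_R \subseteq \Big\{\gamma : \sup_{s\in[0,1]} \rho(\gamma(0),\gamma(s)) \ge \rho(o,\gamma(0)) - R\Big\}.$$
Since $\mu^\nu_\R$ decomposes as $\mu^x_\R \otimes \nu(\dd x)$ and the forward part of $\gamma$ under $\mu^x_\R$ is distributed as $\mu^x_{\R^+}$ with $\gamma(0)=x$, hypothesis \eqref{l4-1-1} immediately yields $\mu^\nu_\R(A_R) < \infty$. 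Hence $\int F^2 \dd\mu^\nu_\R \le \|f\|_\infty^2 \mu^\nu_\R(A_R) < \infty$.

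For density, given $V \in L^2(\mu^\nu_\R)$, I proceed in three approximation steps. First, the truncation $V_K := V \cdot 1_{|V|\le K}$ converges to $V$ in $L^2$ by dominated convergence. Second, fix a cutoff $\chi_R \in C_b^1(\R)$ with $\chi_R \equiv 1$ on $[0,R]$, $\chi_R \equiv 0$ on $[R+1,\infty)$, and $0 \le \chi_R \le 1$, and set $V_{K,R}(\gamma) := V_K(\gamma)\, \chi_R\bigl(\int_0^1 \rho(o,\gamma(s))\,\dd s\bigr)$. Since any continuous path has $\int_0^1 \rho(o,\gamma(s))\,\dd s < \infty$, $V_{K,R} \to V_K$ pointwise, and dominated convergence (with dominating function $|V_K|$) gives $V_{K,R} \to V_K$ in $L^2$. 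It thus suffices to approximate by $\F C_c$ any bounded $W$ supported on $A_{R+1}$, a set of finite $\mu^\nu_\R$-measure by Part 1.

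The third step is to approximate such a bounded $W$ on $(A_{R+1}, \mu^\nu_\R|_{A_{R+1}})$ by elements of $\F C_c$. A monotone-class argument, using that the Borel $\sigma$-algebra on ${\bf E}_\R(M)$ is generated by the evaluation maps $\gamma \mapsto \gamma(t)$, produces approximants of pointwise cylinder form $\phi(\gamma(t_1),\ldots,\gamma(t_N))$ with $\phi \in C_c(M^N)$; each such function is then approximated uniformly (hence in $L^2$ on a finite-measure set) by the smoothed integral-cylinder functions $f\bigl(\tfrac{1}{\varepsilon}\int_{t_i}^{t_i+\varepsilon} \phi_i(\gamma(s))\,\dd s\bigr)$ belonging to $\F C_{Lip}$, as in the proof of Theorem \ref{T2.1}. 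Multiplying such an approximant by $\chi_{R+1}\bigl(\int_0^1 \rho(o,\gamma(s))\,\dd s\bigr)$ yields an element of $\F C_c$ that agrees with the approximant on $A_R$.

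\textbf{Main obstacle.} The only genuinely new ingredient beyond standard cylinder-function density is the use of assumption \eqref{l4-1-1}, whose role is precisely to force the cut-off sets $A_R$ to have finite $\mu^\nu_\R$-mass; this compensates for the infinite mass of $\nu$ and makes the compactly supported cylinder class $\F C_c$ a legitimate subset of $L^2(\mu^\nu_\R)$. Everything else is routine truncation plus smoothing.
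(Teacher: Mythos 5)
Your proposal is correct and follows essentially the same route as the paper: hypothesis \eqref{l4-1-1} is used precisely to give the sets $\{\gamma:\int_0^1\rho(o,\gamma(s))\,\dd s\le R\}$ finite $\mu^{\nu}_{\R}$-mass via the reverse-triangle-inequality observation, and density is obtained by cutting off with $\phi_R(\int_0^1\rho(o,\gamma(s))\,\dd s)$ and replacing pointwise evaluations by time-averaged integrals, exactly as in the paper's construction of $G_{k,R}$. The only cosmetic differences are that you make explicit the reduction to compactly supported pointwise cylinder functions (truncation plus monotone class), which the paper leaves implicit, whereas the paper details the smoothing of $f$ on $M^m$ via a Nash embedding; also the convergence of the time-averaged approximants is pointwise in $\gamma$ (by continuity of paths) rather than uniform, but the dominated convergence on a finite-measure set that you invoke closes that gap.
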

\begin{proof}
{\bf Step (i)} We first show $\F C_c \subset L^2({\bf E}_{\R}(M); \mu^\nu_{\R})$. For every $F \in \F C_c$,
without loss of generality we may assume that there exist
$R>0$, such that $F(\gamma)=0$ for all $\gamma\in {\bf E}_{\R}(M)$ satisfying
$\int_0^1 \rho\big(o,\gamma(s)\big)\dd s>R$.
Then we have
\begin{align*}
& \quad \int_{{\bf E}_{\R}(M)}|F(\gamma)|^2\mu_{\R}^{\nu}(\dd \gamma)=
\int_M\int_{{\bf E}_{\R}^x(M)}|F(\gamma)|^2\mu_{\R}^{x}(\dd \gamma)\nu(\dd x)\\
&=\int_{B(o,2R)}\int_{{\bf E}_{\R}^x(M)}|F(\gamma)|^2\mu_{\R}^{x}(\dd \gamma)\nu(\dd x)
+\int_{B(o,2R)^c}\int_{{\bf E}_{\R}^x(M)}|F(\gamma)|^2\mu_{\R}^{x}(\dd \gamma)\nu(\dd x)\\
&\le \|F\|_{\infty}^2\Big(\nu\big(B(o,2R)\big)+\int_{B(o,2R)^c}\mu^x_{\R^+}\Big(\sup_{s\in [0,1]}\rho(x,\gamma(s))>
\rho(o,x)-R\Big)\Big)\nu(\dd x)<\infty,\\
\end{align*}
where the third step is due to the fact when $x\notin B(o,2R)$, $F(\gamma)=0$ for all $\gamma \in {\bf E}_{\R}^x(M)$ with
$\sup_{s\in [0,1]}\rho(\gamma(s),x)\le \rho(o,x)-R$ (if $\sup_{s\in [0,1]}\rho(\gamma(s),x)\le \rho(o,x)-R$, then
$\int_0^1\rho\big(o,\gamma(s)\big)\dd s\ge \inf_{s\in [0,1]}\rho(\gamma(s),o)>R$,
hence $F(\gamma)=0$).

{\bf Step (ii)} Now we are going to show $\F C_c$ is dense in $L^2({\bf E}_{\R}(M); \mu^\nu_{\R})$. It suffices to
prove that for every $G(\gamma):=f\Big(\gamma(t_1),\cdots ,\gamma(t_m)\Big)$ with some $m\in \mathbb{N}^+$,
$t_1<t_2\cdots <t_m$ and $f\in C_c^1(M^m)$, there exists a sequence $\{G_{k,R}\}_{k,R}\subset
\F C_c$ such that $\lim_{k, R \rightarrow \infty}
\mu_{\R}^{\nu}\big(|G_{k,R}-G|^2\big)=0$. Here $C_c^1(M^m)$ denotes the $C^1$ functions on $M^m$ with compact support.

By Nash isometric imbedding theorem, there is a smooth isometric imbedding $\eta:M \rightarrow \mathbb{R}^N$ with some
$N \in \mathbb{N}^+$ and we can extend $f\in C_c^1(M^m)$ to  $\tilde f \in C_c^1(\R^{Nm})$ satisfying
$\tilde f\big(\eta(x)\big)=f(x)$ for all $x\in M$. Choose $\varphi_R\in C_b^1(\mathbb{R},\mathbb{R})$,
$\phi_R \in C_c^1(\mathbb{R},\mathbb{R})$ satisfying
\begin{equation*}
\varphi_R(x)=
\begin{cases}
& x,\ \ \ \ \ \ \ \ \text{if}\ |x|\le R,\\
& R+1,\ \ \ \ \ \ \ \text{if}\ x>R+1,\\
& -R-1,\ \ \ \ \ \ \ \ \text{if}\ x<-R-1,
\end{cases}
\end{equation*}
\begin{equation*}
\phi_R(x)=
\begin{cases}
& 1,\ \ \ \ \ \ \ \ \text{if}\ |x|\le R,\\
& \in (0,1),\ \ \ \ \ \ \ \text{if}\ R<|x|<R+1,\\
& 0,\ \ \ \ \ \ \ \ \text{if}\ |x|>R+1.
\end{cases}
\end{equation*}

 We set $\varphi_{R,N}(x):=\prod_{i=1}^N\varphi_R(x_i)$ for $x=(x_1,...,x_N)$.
$$G_{k,R}(\gamma):=
\phi_R\Big(\int_0^1\rho\big(o,\gamma(s)\big)\dd s\Big)
\tilde f\Big(k\int_{t_1}^{t_1+\frac{1}{k}}\varphi_{R,N}\circ\eta\big(\gamma(s)\big)\dd s,\cdots,
k\int_{t_m}^{t_m+\frac{1}{k}}\varphi_{R,N}\circ\eta\big(\gamma(s)\big)\dd s\Big),$$
then it is easy to verify that $G_{k,R} \in \F C_c$ for all $k>0$ and $R$ large enough,
and  $\lim_{k, R \rightarrow \infty}
\mu_{\R}^{\nu}\big(|G_{k,R}-G|^2\big)=0$ (
(since $\tilde f \in C_c^1(\R^{Nm})$, this could be shown by the dominated convergence theoem). By now we have finished the proof.

\end{proof}

Now we give some sufficient conditions on the curvature of $M$ for \eqref{l4-1-1}.

\begin{lem}\label{r4-1}
Suppose that
\begin{equation}\label{r4-1-1}
{\rm Ric}_x(X,X)\ge -C_1\big(1+\rho(o,x)^{\alpha}\big),\ \ \forall\ x\in M,\ X\in T_xM, |X|=1,
\end{equation}
for some $C_1>0$, $\alpha\in (0,2)$ and $o\in M$, where ${\rm Ric}_x$
denotes the Ricci curvature operator at $x\in M$. Then for every Radon measure $\nu(\dd x)=\nu(x)\dd x$
(here $\dd x$ denotes the volume measure on $M$) such that
\begin{equation}\label{r4-1-2a}
|\nu(x)|\le C_2\exp(C_3\rho(o,x)^{\beta}),\ \ \forall x\in M
\end{equation}
with some $C_2,C_3>0$ and $\beta\in (0,2)$, \eqref{l4-1-1} holds.
\end{lem}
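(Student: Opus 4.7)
The plan is to split the integral over $M$ according to $\rho(o,x)$ and to extract a Gaussian-type tail estimate for the Brownian displacement on the outer region. For $x\in B(o,2R+1)$ the integrand is bounded by $1$, and $\nu$ is locally finite (Radon), so $\int_{B(o,2R+1)}\mu^x_{\R^+}(\cdots)\,\nu(\dd x)\le \nu(B(o,2R+1))<\infty$. It remains to handle $x\in B(o,2R+1)^c$, for which I denote $\ell:=\rho(o,x)$ and introduce the stopping time $\tau_\ell:=\inf\{s\ge 0: \rho(x,\gamma_s)\ge \ell-R\}$. Before $\tau_\ell$ the path satisfies $\rho(o,\gamma_s)\le \ell+(\ell-R)=2\ell-R$ by the triangle inequality, and any minimizing geodesic from $x$ to $\gamma_s$ stays in $B(o,2\ell-R)$; hence along these geodesics \eqref{r4-1-1} gives the \emph{uniform} lower bound $\mathrm{Ric}\ge -K_\ell$ with $K_\ell:=(n-1)C_1\bigl(1+(2\ell-R)^\alpha\bigr)$.

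With this localized bound in hand, the next step is to apply the Laplacian comparison theorem in the region swept out before $\tau_\ell$. Following the standard Kendall--Cranston device to absorb the cut-locus contribution into a non-increasing term, Ito's formula yields
\[
 \rho(x,\gamma_{t\wedge\tau_\ell})\le b_t+\tfrac{1}{2}\int_0^{t\wedge\tau_\ell}\sqrt{(n-1)K_\ell}\,\coth\!\Bigl(\sqrt{K_\ell/(n-1)}\,\rho(x,\gamma_s)\Bigr)\dd s,
\]
with $(b_t)$ a standard one-dimensional Brownian motion. Using $\sqrt K\coth(\sqrt K r)\le \sqrt K+1/r$ and a Bessel-type comparison to handle the $1/r$ singularity at the origin, one deduces a Gaussian tail of the form
\[
 \mu^x_{\R^+}\Bigl(\sup_{s\in[0,1]}\rho(x,\gamma_s)>\ell-R\Bigr)=\mu^x_{\R^+}(\tau_\ell\le 1)\le C\exp\Bigl(-\tfrac{(\ell-R-C'\sqrt{K_\ell})^2}{8}\Bigr),
\]
valid for $\ell$ sufficiently large. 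Because $\sqrt{K_\ell}\le C\ell^{\alpha/2}$ with $\alpha/2<1$, for $\ell$ large enough this is bounded by $C\exp(-c\ell^2)$.

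For the volume side, the Bishop--Gromov theorem applied in $B(o,r)$ with the uniform Ricci lower bound $-C_1(1+r^\alpha)$ gives $\mathrm{Vol}(B(o,r))\le C\exp(Cr^{1+\alpha/2})$, and combining with the pointwise density bound \eqref{r4-1-2a} yields, for every integer $k\ge 2R+1$,
\[
 \nu\bigl(B(o,k+1)\setminus B(o,k)\bigr)\le C\exp\bigl(Ck^{\gamma}\bigr),\qquad \gamma:=\max\bigl(\beta,\,1+\tfrac{\alpha}{2}\bigr)<2.
\]
Summing an annular decomposition,
\[
 \int_{B(o,2R+1)^c}\!\!\mu^x_{\R^+}(\cdots)\,\nu(\dd x)\le \sum_{k\ge 2R+1}C\exp\bigl(Ck^{\gamma}-ck^{2}\bigr)<\infty,
\]
because $\gamma<2$. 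The main obstacle is the Gaussian tail estimate in the middle step: the Ricci bound in \eqref{r4-1-1} depends on the base point, so one cannot invoke a standard constant-curvature-bound displacement estimate directly; the remedy is the localization up to $\tau_\ell$, which turns the position-dependent bound into the uniform constant $K_\ell$ on the relevant region at the price of a polynomial growth in $\ell$, and this is precisely where the hypotheses $\alpha,\beta<2$ enter to keep the $\ell^{2}$ from the Gaussian term dominant.
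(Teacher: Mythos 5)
Your proposal is correct and follows essentially the same route as the paper: a Gaussian tail bound for the radial displacement whose correction term grows only like $\rho(o,x)^{\alpha/2}$ (the paper gets this by making the Ricci bound uniform via ${\rm Ric}_y\ge -2K_1(\rho(x,y))-2K_1(\rho(o,x))$ and citing \cite[Lemma 2.2]{W04}, while you localize geometrically up to a stopping time and rederive it from the Kendall--Cranston radial It\^o formula), combined with the comparison-theorem volume growth $e^{Cr^{1+\alpha/2}}$ and the density bound, with convergence resting on $\max(\beta,1+\alpha/2)<2$. The differences are only in packaging, not in substance.
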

\begin{proof}
Note that \eqref{r4-1-1} implies that
\begin{equation}\label{r4-1-1a}
{\rm Ric}_y(Y,Y)\ge -K_1\big(\rho(o,y)\big),\ \ \forall\ y\in M,\ Y\in T_yM,\ |Y|=1,
\end{equation}
with $K_1(r):=C_1(1+r^{\alpha})$.
It is easy to verify that we could find a $c_1>0$  such that
\begin{equation}\label{r4-1-2b}
c_2:=\sup_{t>0}\big(t\sqrt{(n-1)K_1(t)}-2c_1t^2\big)<\infty.
\end{equation}
Then according to \cite[Lemma 2.2]{W04} we know that
for every $N>0$ and $T>0$,
\begin{equation}\label{r4-1-3a}
\mu_{\R^+}^o\Big(\sup_{s\in [0,T]}\rho\big(o,\gamma(s)\big)>N\Big)\le e^{n+c_2-\kappa(T)N^2},
\end{equation}
where $\kappa(T):=\frac{1}{2T}e^{-1-2c_1T}$.

Also note that \eqref{r4-1-1a} implies for every $x\in M$,
\begin{align*}
{\rm Ric}_y(Y,Y)\ge -2K_1\big(\rho(x,y)\big)-2K_1\big(\rho(o,x)\big),\ \ \forall\ y\in M,\ Y\in T_yM,
\end{align*}
Then taking $c_1=1$ and using $2K_1(t)+2K_1\big(\rho(o,x)\big)$ to replace $K_1(t)$ in \eqref{r4-1-2b},
we have $c_2\le c_3(1+\rho(o,x)^\alpha)$. Therefore according to
\eqref{r4-1-3a} we know for all $R>0$ and $x\notin B(o,2R)$,
\begin{equation}\label{r4-1-3}
\begin{split}
&\mu_{\R^+}^x\Big(\sup_{s\in [0,1]}\rho\big(x,\gamma(s)\big)>\rho(o,x)-R\Big)\\
&\le \exp\Big(n+c_3(1+\rho(o,x)^\alpha)-\kappa(1)(\rho(o,x)-R)^2\Big)\\
&\le \exp\Big(n+c_3(1+\rho(o,x)^\alpha)-\frac{\kappa(1)}{4}\rho(o,x)^2\Big)\\
&\le c_4e^{-c_5\rho(o,x)^2},
\end{split}
\end{equation}
where $c_4$, $c_5$ are positive constants independent of $x\in M$ and $R>0$.
Denote by ${\rm Cut}(o)$ the cut-locus of $o$ in $M$ and the exponential map from $o\in M$ by ${\rm exp}_o:T_oM \rightarrow M$.
It is well known that $\exp_o^{-1}: M\setminus {\rm Cut}(o) \rightarrow
\exp_o^{-1}\big(M\setminus {\rm Cut}(o)\big) \subset T_o M \simeq \R^+\times \mathbb{S}^{n-1}$ is a diffeomorphism, which induces
the geodesic spherical coordinates of $M$ (see e.g. \cite[Section III.1]{Cha}
for details). Let $(r,\theta)\in \R^+\times \mathbb{S}^{n-1}$ be the element in geodesic spherical coordinates,
then for every $f\in C_c(M)$ we have (see e.g. \cite[Theorem III 3.1]{Cha})
\begin{align*}
\int_M f(x)dx=\int_{\R^+}\int_{\mathbb{S}^{n-1}}f\big((r,\theta)\big)\big||\mathscr{A}(r,\theta)|\big|\dd r \dd\theta,
\end{align*}
where $\mathscr{A}(r,\theta)$ is a $n\times n$ matrix, $|\mathscr{A}|$ denotes the determinant of
$\mathscr{A}$, and $\mathscr{A}$ satisfying the following equation
\begin{align*}
\mathscr{A}^{''}(t,\theta)+\mathscr{R}(t,\theta)\mathscr{A}(t,\theta)=0,\ \ \mathscr{A}(0,\theta)=0,\
\mathscr{A}'(0,\theta)=\mathbf{I}.
\end{align*}
Here $\mathscr{R}(t,\theta)\in L(\R^n;\R^n)\simeq \R^{n\times n}$ and
$\mathscr{R}(t,\theta)\xi:=U_t^{-1}{\rm R}\big(\gamma'_\theta(t),U_t\xi\big)\gamma'_\theta(t)$ for all $\xi\in \R^n$
with $\gamma_\theta(t)=\exp_o\big((t,\theta)\big)$, $U_t:\R^n \rightarrow T_{\gamma_\theta(t)}M$ is the parallel translation
along geodesic $\gamma_{\theta}(\cdot)$, ${\rm R}$ denotes the Riemannian curvature operator on $M$.

Moreover, we have the following estimates for $|\mathscr{A}|$ (see e.g. \cite[Theorem III 4.3]{Cha}),
\begin{equation}\label{r4-1-4}
\big||\mathscr{A}(r,\theta)|\big|\le \Big(\sqrt{\frac{n-1}{K_1(r)}}{\rm sinh}
\Big(\sqrt{\frac{K_1(r)}{n-1}}r\Big)\Big)^{n-1}\le c_6e^{c_7r^{1+\alpha/2}},\ r>0,
\end{equation}
where $c_6,c_7$ are positive constants independent of $r$, $K_1(r)$ is the function in \eqref{r4-1-1a} and
the last step is due to $\frac{\sinh a}{a}\leq \cosh a$ and \eqref{r4-1-1}.

Combining  \eqref{r4-1-2a}, \eqref{r4-1-3} and \eqref{r4-1-4} yields
\begin{equation}\label{r4-1-5}
\begin{split}
&\int_{B(o,2R)^c}\mu_{\R^+}^x\Big(\sup_{s\in [0,T]}\rho\big(x,\gamma(s)\big)>\rho(o,x)-R\Big)\nu(\dd x)\\
&\le c_8\int_{2R}^\infty\int_{\mathbb{S}^{n-1}} \exp\big(C_3r^\beta+c_7r^{1+\alpha/2}-c_5r^2\big)\dd\theta \dd r\\
&\le c_9\int_{2R}^\infty e^{-c_{10}r^2}\dd r\le c_{11}e^{-c_{12}R^2}.
\end{split}
\end{equation}
Here in the second step of inequality we have applied the fact $\alpha\in (0,2)$ and $\beta\in (0,2)$.

Based on this estimate we could obtain \eqref{l4-1-1} immediately.
\end{proof}


\begin{Remark}
By Lemma \ref{r4-1} we know that under curvature condition \eqref{r4-1-1}, the property
\eqref{l4-1-1} holds if $\nu$ is the volume measure of $M$.
\end{Remark}

\begin{Remark}\label{r4-3}
For the free loop measure $\tilde \mu^{\nu}$ defined by \eqref{e 3-2} on ${\bf L}(M)$, by carefully tracking the proof
of Lemma \ref{l4-1} we know if for every $R>0$,
\begin{equation}\label{r4-3-1}
\int_{M}\tilde \mu^x\Big(\sup_{s\in [0,\frac{1}{2}]}\rho\big(x,\gamma(s)\big)>\rho(o,x)-R\Big)\nu(\dd x)<\infty,
\end{equation}
then $\F C_c$ is dense in $L^2({\bf L}(M); \tilde \mu^\nu)$.

When we choose $\nu(\dd x)= p_1(x,x)\dd x$, it holds.
\begin{equation}\label{est}\aligned
&\int_{M}\tilde \mu^x\Big(\sup_{s\in [0,\frac{1}{2}]}\rho\big(x,\gamma(s)\big)>\rho(o,x)-R\Big)\nu(\dd x)\\
&=\int_{M}\mu^x_{\R^+}\Big(1_{\{\sup_{s\in [0,\frac{1}{2}]}\rho\big(x,\gamma(s)\big)>\rho(o,x)-R\}}(\gamma)
p_{\frac{1}{2}}\big(\gamma(\frac{1}{2}),x\big)\Big)\dd x\\
&\le \int_{M} \sqrt{\mu^x_{\R^+}\Big(\sup_{s\in [0,\frac{1}{2}]}\rho\big(x,\gamma(s)\big)>\rho(o,x)-R\Big)}
\sqrt{\int_{M}p_{\frac{1}{2}}(x,y)^3 \dd y}\dd x,\endaligned
\end{equation}
where the last step is due to Cauchy-Schwartz inequality.

If the curvature condition \eqref{r4-1-1} holds, then we know that \eqref{r4-1-3} is true. Moreover, suppose \eqref{r4-1-1} holds and
the following lower bound of volume \eqref{r4-3-2} is satisfied (, which could be viewed as an local volume non-collapsed condition)
\begin{equation}\label{r4-3-2}
\inf_{x; \rho(o,x)\le R}m\Big(B(x,\frac{1}{2})\Big)\ge C_1e^{-C_2 R^\beta},\ \forall R>1,
\end{equation}
where $\beta\in (0,2), C_1, C_2$ are  constants, $m$ denotes the volume measure on $M$ and $B(x,r):=\{y\in M; \rho(y,x)\le r\}$ is the geodesic ball
on $M$. Then according to the proof of  \cite[Corollary 3]{ATW06} (here our curvature condition \eqref{r4-1-1} is a little different from \cite{ATW06}), we have
\begin{align*}
p_{\frac{1}{2}}(x,y)\le C_3\exp\Big(-C_4\rho(x,y)^2+C_5\big(\rho(o,x)^{\max(\alpha,\beta)}+\rho(o,y)^{\max(\alpha,\beta)}\big)\Big).
\end{align*}
Putting this into \eqref{est} and by the same arguments as in the proof of Lemma \ref{r4-1} we could prove \eqref{r4-3-1}.

As a result, combining all the above estimates we could prove that if \eqref{r4-1-1} and \eqref{r4-3-2} are true, then
$\F C_c$ is dense in $L^2({\bf L}(M); \tilde \mu^\nu)$.
\end{Remark}

For  $F \in \F C_c$, we still define the directional derivative $D_h F(\gamma)$
along  $h \in \HH:=L^2(\R\rightarrow \R^n;\dd s)$ and the gradient operator $DF\in \HH$ as in \eqref{eq3.03} and \eqref{eq2.03}, respectively. Here as explained before  Lemma \ref{l2.8} we know that $D_hF$ and $DF$ are well-defined for $\mu^\nu_\R$-a.e. $\gamma$.

Now for the fixed $o\in M$, as in Lemma \ref{l2.5} (although here the initial point will not be fixed, see e.g. \cite{TW98} or \cite{CLW17})
we could construct a series of relatively compact
subset $\{D_m\}_{m=1}^{\infty}$ of $M$ (with $o\in D_m$ for all $m$), and a series of adapted vector fields
$\{l_{m,T}\}_{m,T=1}^{\infty}$ such that $l_{m,T}: [0,\infty)\times W_{\R^+}(M)\rightarrow [0,1]$,
items (1)-(2) in Lemma \ref{l2.5} and the following estimates hold
\begin{equation*}
\sup_{x \in D_{m}}\int_{{\bf E}^x_{\R^+}(M)}\int_0^t |l'_{k,T}(s,\gamma)|^p \dd s \mu^x_{\R^+}(\dd \gamma)<\infty,\ \ k>m,\ p>0.
\end{equation*}
In particular, by (1) in Lemma \ref{l2.5} we have
\begin{equation*}
l_{m,T}(s,\gamma)\equiv 0,\ \mu^x_{\R^+}-a.s.\ \gamma\in {\bf E}_{\R^+}^x(M) \ \ \textrm{if}\ x\notin D_{m}.
\end{equation*}
As before, we split  $\gamma\in {\bf E}_{\R}(M)$ into $\tilde{\gamma}, \bar{\gamma}\in {\bf E}_{\R^+}(M)$ by
$$\tilde{\gamma}(s):=\gamma(s), s\geq0,\quad \bar{\gamma}(s):=\gamma(-s), s\geq0,$$
and following the procedures of \eqref{eq2.05-1} we could extend $l_{m,T}$ to
an adapted vector field $\hat l_{m,T}:\R \times {\bf E}_{\R}(M)\rightarrow [0,1]$. Moreover, it holds that
\begin{equation*}
\hat l_{m,T}(s,\gamma)\equiv 0,\ \mu^x_{\R}-a.s.\ \gamma\in {\bf E}_{\R}^x(M) \ \ \textrm{if}\ x\notin D_{m}.
\end{equation*}
\begin{equation}\label{e3-2a}
\sup_{x \in D_{m}}\int_{{\bf E}^x_{\R}(M)}\int_0^t |\hat l'_{k,T}(s,\gamma)|^p \dd s \mu^x_{\R}(\dd \gamma)<\infty,\ \ \forall\ k>m,\ p>0.
\end{equation}
By the proof of Theorem \ref{l2.7} in \cite{CLW17} (see also appendix), 
\eqref{eq2.04} holds for $\mu^x_{\R}$ with every $x\in D_q$
with $q<m$, which yields immediately
for every  $F\in \F C_c,\ h \in \H^{\infty},\ m,k,T\in \mathbb{N}^+$ with $k>m$ (note that
$h(0)=0$ for every $h\in \H^{\infty}$),
\begin{equation}\label{e3-2}
\int_{D_m}\int_{{\bf E}_\mathbb{R}^x(M)} \langle DF, \hat l_{k,T}h\rangle_{\HH}
\dd \mu^{x}_{\R}\nu(\dd x)=
\int_{D_m}\int_{{\bf E}_{\R}^x(M)} F\Theta_h^{k,T} \dd \mu^{x}_{\R}\nu(\dd x),\
\end{equation}
where $\Theta_h^{k,T}$ is defined by \eqref{eq2.04-1}.

Fix a sequence of elements $\{h_k\}\subset \H^{\infty}$ such that it is an orthonormal basis in $\HH$, we define
the following symmetric quadratic form
$$\E^{\nu}_{\R}(F,G):=\frac{1}{2}\int_{{\bf E}_{\R}(M)}
\langle DF, DG\rangle_{\HH}\dd\mu^{\nu}_{\R}=
\frac{1}{2}\sum_{k=1}^\infty\int_{{\bf E}_{\R}(M)} D_{h_k}F D_{h_k}G  \dd\mu^{\nu}_{\R}; \quad F,G\in\F C_c.$$
In particular, by  the same  arguments as in the proof of Lemma \ref{l4-1}, we know $\E^{\nu}_{\R}(F,F)<\infty$ for every
$F\in \F C_c$.

Since the reference measure $\mu^{\nu}_{\R}$ has infinite mass, we use a cut-off technique to prove the quasi-regularity of the associated $L^2$-Dirichlet form.

\beg{thm}\label{t 4-1}
Suppose that \eqref{l4-1-1} holds. Then the quadratic form $(\E^{\nu}_{\R}, \F C_c)$
is closable and its closure $(\E^{\nu}_{\R},\D(\E^{\nu}_{\R}))$ is a quasi-regular Dirichlet form on
$L^2({\bf E}_{\R}(M);\mu^{\nu}_{\R})$.
\end{thm}
\begin{proof}
{\bf$(a)$ Closablity:} The proof is similar to that of Theorem \ref{T 3.1}. Suppose $\{F_k\}_{k=1}^{\infty}\subseteq \F C_c$ is a sequence of cylinder functions with
\begin{equation}\label{t4 -1-1}
\begin{split}
\lim_{m \rightarrow \infty}\mu^{\nu}_{\R}\left( F_m^2\right)=0,\ \
\lim_{k,m \rightarrow \infty}\E^{\nu}_{\R}\left(F_k-F_m,F_k-F_m\right)=0.
\end{split}
\end{equation}
Thus $\{D F_m\}_{m=1}^{\infty}$ is a Cauchy sequence in
$L^2\left({\bf E}_{\R}(M)\rightarrow \HH;\mu^{\nu}_{\R}\right)$ for which there exists a limit $\Phi$. It suffices to prove that $\Phi=0$.

Combining \eqref{t4 -1-1} with \eqref{e3-2a} and \eqref{e3-2} yields that for all $m,k,T\in \mathbb{N}^+$, $G\in \F C_c$ and
the orthonormal basis $\{h_i\}_{i=1}^{\infty}\subset \H^{\infty}$ of $\HH$ with
$k>m$,
\begin{equation*}
\int_{D_m}\int_{{\bf E}_\mathbb{R}^x(M)} G\langle \Phi, \hat l_{k,T}h_i\rangle_{\HH}
\dd \mu^{x}_{\R}\nu(\dd x)=0,
\end{equation*}
which ensures the existence of a $\mu_{\R}^{\nu}$-null set $\Delta_i$ such that for all
$m,k,T\in \mathbb{N}^+$ with $k>m$,
\begin{equation}\label{t 4-1-2}
\hat l_{k,T}(\gamma)\langle \Phi(\gamma),h_i\rangle_{\HH}=0,\ \ \forall\ \gamma\notin \Delta_i, \gamma(0)\in D_m.
\end{equation}

For a fixed $h_i\in \H^{\infty}$, we could find  $T_i\in \mathbb{N}^+$ (which may depend on $h_i$) satisfying
$\supp h_k\subset [-T_i,T_i]$. Since $\gamma(\cdot)$ is non-explosive,
for every $\gamma \notin \Delta_0$ with some $\mu^o_{\R}$-null set $\Delta_0$, there exist $m_i,k_i\in \mathbb{Z}_+$
(which may depend on $\gamma$), such that
$k_i>m_i$, $\gamma(0)\in D_{m_i}$, $ \gamma(t)\in D_{k_i-1}$  for all $t \in [-T_i,T_i]$,
hence $\hat l_{k_i,T_i}(t,\gamma)=1$ for all $t \in [-T_i,T_i]$.
By this  and \eqref{t 4-1-2} we know
$$\langle \Phi(\gamma), h_i\rangle_{\HH}=0,\quad  i\ge 1,
\gamma \notin \Delta_0\cup\Delta_i,$$
which implies that
$\Phi(\gamma)=0$, $\forall\ \gamma \notin \Delta:=\cup_{i=0}^{\infty}\Delta_i$. So $\Phi=0$, a.s., and
$(\E_{\R}^\nu,\F C_c)$ is closable.
By  standard methods, we show easily that its closure $(\E^{\nu}_{\R},\D(\E^{\nu}_{\R}))$ is a Dirichlet form.

{\bf$(b)$ Quasi-Regularity:}

We first verify
(i) of  \cite[Definition IV-3.1]{MR92}:
Since the metric space $({\bf E}_{\R}(M); \tilde d)$ ($\tilde d$ is defined by \eqref{eq2.12-a}) is separable,
we can choose a fixed countable dense subset $\{\xi_m|m\in\mathbb{N}^+\}\subset W_{\R}(M)$.
Let $\varphi\in C_b^\infty(\mathbb{R})$ such that
$\varphi$ is an increasing function  satisfying
$$\varphi(t)=t,\quad \forall~t\in[-1,1]~~\text{and}~~\|\varphi'\|_{\infty}\leq 1.$$
Let $\phi_R\in C_c^{\infty}(\R)$ such that $\|\phi_R'\|_{\infty}\le 2$ and
\begin{equation*}
\phi_R(x)=
\begin{cases}
& 1,\ \ \ \ \ \ \ \ \text{if}\ |x|\le R,\\
& \in (0,1),\ \ \ \ \text{if}\ R<|x|\le R+1,\\
& 0,\ \ \ \ \ \ \ \ \text{if}\ |x|>R+1.
\end{cases}
\end{equation*}

For  fixed $o\in M$ and each $m,R\in \mathbb{N}^+$, we define $v_{m,R}:{\bf E}_{\R}(M)\rightarrow\mathbb{R}$ by
$$v_{m,R}(\gamma)=\phi_R\Big(\int_0^{1} \rho(o,\gamma(s))\dd s\Big)\varphi(\tilde{d}(\gamma,\xi_m)),\quad\gamma\in {\bf E}_{\R}(M)
.$$
Then by similar argument as in the proof of Lemma \ref{l2.8}, it is easy to see that $v_{m,R}\in  \D(\E^{\nu}_{\R})$.

Define for closed set $A\subset {\bf E}_{\R}(M)$
$$\D_A(\E^{\nu}_{\R}):=\{u\in \D(\E^{\nu}_{\R})|u=0 \quad \mu_{\R}^{\nu}-\textrm{ a.e. on } A^c\}, $$
which is a closed subspace of $\D(\E^{\nu}_{\R})$. This implies that $(\E^{\nu}_{\R}, \D_A(\E^{\nu}_{\R}))$ is a Dirichlet form.
Now we have $v_{m,R}\in \D_{B_{R+1}}(\E^{\nu}_{\R})$,
with $B_{R}:=\{\gamma\in {\bf E}_{\R}(M), \int_0^{1}\rho(o,\gamma(s))\dd s\le R\}$.

Still according to the same procedures as that in the proof of Lemma \ref{l2.8} (2) we have for every $m,R\in \mathbb{N}^+$,
\begin{align*}
&Dv_{m,R}(\gamma)(s)\\
=&\phi_R\Big(\int_0^{1} \rho(o,\gamma(s))\dd s\Big)\varphi'(\tilde d(\gamma,\xi_m))\cdot\Bigg(\sum_{k=1}^{\infty}\frac{1}{2^k}\Big(
 U_s^{-1}(\tilde \gamma)\nabla_1 \tilde{\rho}(\tilde \gamma(s),\xi_m(s))1_{(k-1,k]}(s)\\
&+U_{-s}^{-1}(\bar \gamma)\nabla_1 \tilde{\rho}(\bar \gamma(-s),\xi_m(s))1_{[-k,-k+1)}(s)\Big)\Bigg)\\
&+\phi_R'\Big(\int_0^{1} \rho(\gamma(s),o)\dd s\Big)\varphi(\tilde d(\gamma,\xi_m))\Big(U_s^{-1}(\tilde \gamma)
\nabla_1 \rho(\tilde \gamma(s),o)\Big)1_{(0,1]}(s)
\end{align*}
for $\dd s\times \mu^\nu_{\R}-a.s.
  (s,\gamma)\in \R\times {\bf E}^o_{\R}(M).$
Such expression yields that for every fixed $R\in\mathbb{N}^+$,
\begin{equation*}
\int \sup_{m\ge 1}|Dv_{m,R}|_{\bf H}^2\dd \mu^\nu_\R<\infty.
\end{equation*}
 Based on this and \cite[Lemma I-2.12, Proposition III-3.5, Lemma IV-4.1]{MR92} we obtain that  for every fixed $R>0$,
\begin{equation}\label{eq2.13}
w_{k,R}:=\inf_{m\leq k}v_{m,R} \textrm{ converges } \E^\nu_{\R} -\text{quasi-uniformly to zero on} ~{\bf E}_{\R}(M).
\end{equation}
Therefore for each $R,N\in \mathbb{N}^+$ there exists a closed set $\hat F_{N,R}\subset {\bf E}_{\R}(M)$ with
\begin{equation}\label{Cap1}
\text{Cap}((\hat F_{N,R})^c)<\frac{1}{N},
\end{equation}
 and
$w_{k,R}$ converges  uniformly on $\hat F_{N,R}$ to zero as $k \rightarrow \infty$.
Here $\text{Cap}$ denotes the capacity associated to the Dirichlet form
$(\E^{\nu}_{\R},\D(\E^{\nu}_{\R}))$. In particular, for every open set $U\subset {\bf E}_{\R}(M)$
\begin{equation*}
\text{Cap}(U):=\inf\{\E^{\nu}_{\R,1}(w,w)| w\in \D(\E^{\nu}_{\R}), w\ge G_1\psi\quad \mu^\nu_\R\textrm{-a.e. on }U \},
\end{equation*}
where  $\psi\in L^2({\bf E}_{\R}(M),\mu^\nu_\R)$ with $\psi>0$ is arbitrarily chosen,
for $\beta\in \R^+, w\in \D(\E^{\nu}_{\R})$, $\E^{\nu}_{\R,\beta}(w,w):=\E^{\nu}_{\R}(w,w)+\beta\mu^\nu_\R(w^2)$ and $(G_\alpha)_{\alpha>0}$ is the resolvent associated to the Dirichlet form $(\E^{\nu}_{\R},\D(\E^{\nu}_{\R}))$
(we refer readers to \cite[Chapter III. Defi. 2.4]{MR92} for more details).

Set ${F}_{N,R}:=\hat{F}_{N,R}\cap B_R$. Since by definition of $\F C_c$ and $\D(\E_{\R}^\nu)$, it is easy to verify that
 $\F C_c\subset \bigcup_{R=1}^\infty\D_{B_R}(\E_{\R}^\nu)
\subset \D(\E_{\R}^\nu)$, which implies that $ \bigcup_{R=1}^\infty\D_{B_R}(\E_{\R}^\nu)$ is dense in
$\D(\E_{\R}^\nu)$ (with respect to $\E^{\nu}_{\R,1}$ norm). Then according to \cite[Theorem III-2.11]{MR92}
we obtain
\begin{equation}\label{t4-1-3}
\lim_{R \rightarrow \infty}\text{Cap}(B_R^c)=0.
\end{equation}
Note that by \eqref{Cap1}
$$\aligned\text{Cap}((F_{N,R})^c)\le &
\text{Cap}((\hat F_{N,R})^c)+\text{Cap}(B_R^c)\\
\le& \frac{1}{N}+\text{Cap}(B_R^c).\endaligned$$
Combining this with \eqref{t4-1-3} yields
\begin{equation}\label{t4-1-4}
\lim_{N,R\rightarrow \infty}\text{Cap}((F_{N,R})^c)=0.
\end{equation}

Moreover, we have $w_{k,R}\rightarrow0$ uniformly on $F_{N,R}\subset B_{R}$ as $k\rightarrow \infty$
and $\phi_R(\int_0^{1}\rho(o,\gamma(s))\dd s)=1$ on $B_{R}$, therefore due to the definition of
  $w_{k,R}$ it is not difficult to verify for every fixed $N,R\in \mathbb{N}^+$,
\begin{align*}
\lim_{k \rightarrow \infty}\sup_{\gamma \in F_{N,R}}\inf_{m\le k}\varphi\big(\tilde d(\gamma,\xi_m)\big)=0.
\end{align*}
Hence for every $0<\varepsilon<1$ there exists $k\in\mathbb{N}^+$ such that $w_{k,R}<\varepsilon$ on $F_{N,R}$, which implies that
$F_{N,R}\subset \cup_{m=1}^k B(\xi_m,\varepsilon)$, where $B(\xi_m,\varepsilon):=\{\gamma\in {\bf E}_{\R}(M);
\tilde d(\xi_m,\gamma)<\varepsilon\}$ denotes the ball in $({\bf E}_{\R}(M),\tilde d)$.
Consequently, for every $N,R\in \mathbb{N}^+$, $F_{N,R}$ is totally bounded, hence compact.

By now we have shown that $\{F_{N,R}\}_{N,R=1}^{\infty}$ is a compact $\E$-nest. So
(i) of \cite[Definition IV-3.1]{MR92} holds

{For any $\gamma, \eta\in {\bf E}_{\R}(M)$ with $\varepsilon:=\tilde{d}(\gamma,\eta)>0$}, then there exist $R\in\mathbb{N}$ and  certain $\xi_M$ such that $\tilde{d}(\xi_M,\eta)<\frac{\varepsilon}{4}$ and $\tilde{d}(\xi_M,\gamma)>\frac{\varepsilon}{4}$.
Taking a $R$ large enough such that $\phi_R\Big(\int_0^{1} \rho(\gamma(s),o)\dd s\Big)=\phi_R\Big(\int_0^{1} \rho(\eta(s),o)\dd s\Big)=1$, then
it is easy to see $v_{M,R}(\gamma)\neq v_{M,R}(\eta)$.
Hence $\{v_{m,R}(\gamma),m, R\in \mathbb{N}^+\}$ separate points and
(iii) of \cite[Definition IV-3.1]{MR92} follows.
Following the same procedures as in the proof of Theorem \ref{T2.1} and Theorem \ref{T3.1} above,  we could
check (ii) in \cite[Definition IV-3.1]{MR92}. By now we have finished the proof.
\end{proof}

By using the theory of Dirichlet form (refer to \cite{MR92}), we obtain the following associated diffusion process.
Furthermore, we also obtain that the process is conservative in the sense that the lifetime of the process is infinity. If the reference measure is finite, it is easy to see $1\in \D(\E^\nu_\R)$ and $\E^\nu_\R(1,1)=0$, which implies the processes are conservative and recurrent.  However, in this case $1\notin \D(\E^\nu_\R)$. Motivated by \cite{Dav} for the finite dimensional case, we construct suitable approximation functions and obtain that the processes are conservative under mild assumptions.

 \beg{thm}\label{t4-2a}
  Suppose that \eqref{l4-1-1} holds. There exists a 
 (Markov) diffusion process
 $\mathbf{M}=(\Omega,\F,\M_t,$ $(X(t))_{t\geq0},(\mathbf{P}^z)_{z\in {\bf E}_{\R}(M)})$ on ${\bf E}_{\R}(M)$
 properly associated with $(\E_{\R}^\nu,\D(\E_{\R}^\nu))$, i.e. for $u\in L^2({\bf E}_{\R}(M);\mu^\nu_{\R})\cap\B_b({\bf E}_{\R}(M))$, the transition semigroup $P_tu(z):=\EE^z[u(X(t))]$ is an $\E_{\R}^\nu$-quasi-continuous version of $T_tu$ for all $t >0$, where $T_t$ is the semigroup associated with $(\E^\nu_{\R},\D(\E^\nu_{\R}))$. Moreover, the results in Theorem \ref{T2.4} also hold in this case.

 Moreover, if conditions \eqref{r4-1-1} and \eqref{r4-1-2a} hold, then the diffusion process
 $\mathbf{M}=(\Omega,\F,\M_t,$ $(X(t))_{t\geq0},(\mathbf{P}^z)_{z\in {\bf E}_{\R}(M)})$ is  conservative in the sense that $T_t1=1$ $\mu_\R^\nu$-a.e. for all $t>0$ (c.f. \cite[Section 1.6 P56]{FOT94}).

In particular, for $M=\R$, $\nu$ being Lebesgue measure, the diffusion process
 $\mathbf{M}=(\Omega,\F,\M_t,$ $(X(t))_{t\geq0},(\mathbf{P}^z)_{z\in {\bf E}_{\R}(M)})$ is recurrent in the sense that $Gf=0$ or $\infty$ $\mu_\R^\nu$-a.e. with
$f\in L^1({\bf E}_{\R}(M);\mu^\nu_{\R})$, $f\geq0$ (c.f. \cite[Section 1.6 P56]{FOT94}). Here $Gf=\int_0^\infty T_tf\dd t$.
 \end{thm}

\begin{proof}
The existence of a diffusion process is the same as that
for Theorem \ref{T2.4} (due to quasi-regularity of $(\E_{\R}^\nu,\D(\E_{\R}^\nu))$), so we omit it here.

{\bf Step (1)}
We first prove that the process is conservative.

Choose $\phi_R\in C_c^\infty(\R)$ to
be the same function as that in the proof of Theorem \ref{t 4-1}.  For every
$R>0$, we define $\Phi_R(\gamma):=\phi_R\Big(\int_{0}^{1} \rho(o,\gamma(s))\dd s\Big)$.
For $N>0$, choose $F\in L^2({\bf E}_{\R}(M), \mu_{\R}^\nu), F\geq0$ with $F(\gamma)=\phi_N(\int_{0}^{1} \rho(o,\gamma(s))\dd s)$.
Let $(L,\D(L))$ denote the infinitesimal generator associated with
$(\E_{\R}^\nu, \D(\E_{\R}^\nu))$, then  it holds that
$u_t:=T_t F\in \D(L)$ for all $t>0$.


Note that
\begin{equation*}
\begin{split}
&D\Phi_{R}(\gamma)(s)
=\phi_R'\Big(\int_0^{1} \rho(\gamma(s),o)\dd s\Big)\Big(U_s^{-1}(\gamma)
\nabla_1 \rho(\gamma(s),o)\Big)1_{[0,1]}(s).
\end{split}
\end{equation*}
Since $D\Phi_R(\gamma)=0$ for all $\gamma$ satisfying $\inf_{t\in[0,1]}\rho(\gamma(s),o)> R+1$, by \eqref{r4-1-4} and
\eqref{r4-1-5} we obtain for all $R>1$,
\begin{equation}\label{bod}\aligned
&\E_{\R}^\nu(\Phi_R)=\int_M\int_{{\bf E}_{\R}^x(M)}
|D\Phi_R(\gamma)|_{\HH}^2\dd \mu^x_{\R}\nu(\dd x)\\
&\le \int_{B(o,2R)}\int_{{\bf E}_{\R}^x(M)}\dd\mu^x_{\R}\nu(\dd x)+\int_{B(o,2R)^c}\mu^x_{\R^+}\Big(
\sup_{s\in [0,1]}\rho(x,\gamma(s))\ge \rho(o,x)-R-1\Big)\nu(\dd x)\\&\leq \nu(B(o,2R))+c_1\exp(-c_2R^2)\leq
c_3\exp(c_4R^{\zeta}),
\endaligned\end{equation}
where $c_1-c_4$ are positive constants independent of $R$, $\zeta:=\max\{1+\frac{\alpha}{2},\beta\}<2$ with
$\alpha, \beta\in (0,1)$ being the constants in \eqref{r4-1-1} and \eqref{r4-1-2a}.
Then we have
\begin{equation}\label{diff}\aligned\mu_\R^\nu( F\Phi_R)-\mu_\R^\nu(  u_t\Phi_R)=&-\int_0^t\frac{d}{ds}\mu_\R^\nu(  u_s\Phi_R) \dd s=-\int_0^t\mu_\R^\nu( Lu_s\Phi_R) \dd s
\\=&\int_0^t\int\langle Du_s,D\Phi_R\rangle_{\HH}\dd\mu^\nu_\R \dd s=\int_0^t\int\langle \varphi_{N,R}Du_s,\varphi_{N,R}^{-1}D\Phi_R\rangle_{\HH}\dd\mu^\nu_\R \dd s
\\\leq& \bigg(\int_0^t \int|\varphi_{N,R} Du_s|^2_{\HH}\dd \mu^\nu_\R\dd s\bigg)^{1/2}\bigg(\int_0^t\int|\varphi_{N,R}^{-1} D\Phi_R|^2_{\HH}\dd \mu^\nu_\R\dd s\bigg)^{1/2},\endaligned\end{equation}
where the operator $D$ on $u_s$ is the closure of $D$ defined in \eqref{eq2.03} and
$$\varphi_{N,R}(\gamma):=\exp\Big(\theta \psi_{N,R}(\int_{0}^{1} \rho(\gamma(s),o)\dd s)\Big),$$
 for some $\theta>0$, $R>2(N+1)$ and $\psi_{N,R}\in C_b^1(\mathbb{R}^+)$ satisfies $\|\psi_{N,R}'\|_\infty\leq 2$,
$\psi_{N,R}(t)=t$ for $t\in [R,R+1]$ and $\psi_{N,R}(t)=0$ for $t\in [0,N+1]$. Define $\varphi_{N,R,M}:=\varphi_{N,R}\Phi_M$. It is obvious that $\varphi_{N,R,M}\in \F C_c$ and
$\lim_{M\rightarrow\infty}\varphi_{N,R,M}=\varphi_{N,R}$ $\mu_\R^\nu$-a.s. $\gamma$. By \cite[Corollary I-4.15]{MR92} we know $\varphi_{N,R,M}^2 u_t\in \D(\E^\nu_\R)$.
Furthermore we have
$$\aligned&\frac{\partial}{\partial t}\mu^\nu_\R(\varphi_{N,R,M}^2 u_t^2)=2\mu^\nu_\R( \varphi_{N,R,M}^2 Lu_t\cdot u_t)=
-2\int \langle Du_t, D(\varphi_{N,R,M}^2 u_t)\rangle_{\HH}\dd \mu^\nu_{\R}
\\=&-2\int\langle Du_t, 2u_t\varphi_{N,R,M}D\varphi_{N,R,M}+\varphi_{N,R,M}^2Du_t\rangle_{\HH}\dd \mu^\nu_{\R}\\
\le&-2\int|\varphi_{N,R,M} Du_t|_{\HH}^2\dd \mu^\nu_{\R}+2\Big(\lambda^{-1}\int|\varphi_{N,R,M} Du_t|_{\HH}^2\dd \mu^\nu_{\R}+\lambda\int|u_t D\varphi_{N,R,M} |_{\HH}^2\dd \mu^\nu_{\R}\Big)\\
\leq&-2\int|\varphi_{N,R,M} Du_t|_{\HH}^2\dd \mu^\nu_{\R}+2\Big(\lambda^{-1}\int|\varphi_{N,R,M} Du_t|_{\HH}^2\dd \mu^\nu_{\R}+8\lambda\theta^2\mu^\nu_\R(\varphi_{N,R,M}^2 u_t^2)\\&+2\lambda\mu^\nu_\R(\varphi_{N,R}^2 |D\Phi_M|_{\HH}^2 u_t^2)\Big).\endaligned$$
Here the last step is due to the property $|D\varphi_{N,R,M}|_{\HH}^2\le 8\theta^2\varphi_{N,R,M}^2+2\varphi_{N,R}^2 |D\Phi_M|_{\HH}^2$.

Choosing $\lambda=1$ and  using Gronwall's Lemma we obtain that
 $$\mu^\nu_\R(\varphi_{N,R,M}^2 u_t^2)\leq \exp(16\theta^2t)\Big(\mu^\nu_{\R}(\varphi_{N,R,M}^2F^2)+\frac{1}{4\theta^2}\lambda\mu^\nu_\R(\varphi_{N,R}^2 |D\Phi_M|_{\HH}^2 u_t^2)\Big).$$
 By the dominated convergence theorem we know  $\lim_{M\rightarrow\infty}\mu^\nu_\R(\varphi_{N,R}^2 |D\Phi_M|_{\HH}^2 u_t^2)=0$.
Based on this, choosing $\lambda=2$ and letting $M\rightarrow\infty$ we have
\begin{equation}\label{bod1}
\int_0^t|\varphi_{N,R} Du_s|_{\HH}^2\dd s\leq 2e^{16\theta^2t}\mu^\nu_{\R}(\varphi_{N,R}^2F^2).
\end{equation}

 For $\gamma$ with $D\Phi_R(\gamma)\neq0$ (i.e. $R\le \int_0^1\rho(o,\gamma(s))\dd s\le R+1$) it is easy to see $\varphi_{N,R}(\gamma)^{-1}\leq e^{-\theta R}$. Now combining \eqref{bod}, \eqref{diff} and \eqref{bod1} yields
 $$\aligned\mu_\R^\nu( F\Phi_R)-\mu_\R^\nu(  u_t\Phi_R)
\leq \bigg[2c_3e^{16\theta^2t}\mu^\nu_{\R}(\varphi_{N,R}^2F^2)e^{-2\theta R}te^{c_4 R^{\zeta}}\bigg]^{1/2},\endaligned$$
 Choosing $\theta=\frac{R}{16t}$ we have
  \begin{equation}\label{e3-1-1}
  \mu_\R^\nu( F\Phi_R)-\mu_\R^\nu(  u_t\Phi_R)
\leq \bigg[c_5\mu^\nu_{\R}(\varphi_{N,R}^2F^2)te^{-\frac{R^2}{16t}+c_4 R^{\zeta}}\bigg]^{1/2},
\end{equation}
where $ c_4,c_5$ are independent of $F$, $N$ and $R$.

We arrive at for all $R>2(N+1)$
\begin{align*}
 \mu_\R^\nu( \Phi_N\Phi_R)-\mu_\R^\nu(  T_t(\Phi_N)\Phi_R)
&\leq \bigg[c_5\mu^\nu_{\R}(\varphi_{N,R}^2\Phi_N^2)te^{-\frac{R^2}{16t}+c_4 R^{\zeta}}\bigg]^{1/2}\\
&=\bigg[c_5\mu^\nu_{\R}(\Phi_N^2)te^{-\frac{R^2}{16t}+c_4 R^{\zeta}}\bigg]^{1/2},
\end{align*}
where the last equality is due to the fact $\Phi_N(\gamma)\neq 0$ only if $\varphi_{N,R}(\gamma)=1$ since
$R>2(N+1)$. Hence letting $R\rightarrow \infty$ we derive for every $N>0$ and $t>0$ (note that $\zeta<2$ here)
$$\int \Phi_N \dd \mu_{\R}^\nu- \int \Phi_N T_t 1\dd \mu_{\R}^\nu=\int \Phi_N \dd \mu_{\R}^\nu- \int T_t(\Phi_N)\dd \mu_{\R}^\nu\leq0.$$
Since it always hold $T_t 1\le 1$, the above inequality implies that
$T_t1(\gamma)=1$ for all $\gamma\in {\bf E}_{\R}(M)$
satisfying $\int_{0}^{1}\rho\big(\gamma(s),o\big)\dd s\leq N$.
Also note that $N$ is arbitrary, we obtain $T_t1(\gamma)=1$ for $\mu^\nu_{\mathbb{R}}$-a.e. $\gamma\in
{\bf E}_{\R}(M)$ immediately, therefore the process ${\bf M}$ is conservative.

{\bf Step (ii)} Now we prove the recurrence property.
 Choosing $\tilde{\phi}_R\in C_c^\infty(\R^+)$ satisfying
  \begin{equation*}
\tilde{\phi}_R(x)=
\begin{cases}
& 1,\ \ \ \ \ \ \ \ \text{if}\ x\le R,\\
& \in (0,1),\ \ \ \ \text{if}\ R<x<2R,\\
&0.\ \ \ \ \ \ \ \  \text{if}\  x>2R,
\end{cases}
\end{equation*}
 and $\|\phi'_R\|_\infty\leq \frac{1}{R}$.
 We define
$\tilde{\Phi}_R(\gamma):=\tilde{\phi}_R\Big(\int_{0}^{1} \rho(o,\gamma(s))\dd s\Big)$.
Then we have
\begin{equation*}
\begin{split}
&D\tilde{\Phi}_{R}(\gamma)(s)
=\phi_R'\Big(\int_0^{1} \rho(\gamma(s),o)\dd s\Big)\Big(U_s^{-1}(\gamma)
\nabla_1 \rho(\gamma(s),o)\Big)1_{[0,1]}(s).
\end{split}
\end{equation*}
Now it holds $|D\tilde{\Phi}_{R}|_{\HH}\leq \frac{1}{R}$ and
$D\tilde{\Phi}_R(\gamma)=0$ all $\gamma$ satisfying $\inf_{t\in[0,1]}\rho(\gamma(s),o)> 2R$, then still according to \eqref{r4-1-5} we get
\begin{align*}
&\E_{\R}^\nu(\tilde{\Phi}_R)=\int_M\int_{{\bf E}_{\R}^x(M)}
|D\tilde{\Phi}_R(\gamma)|_{\HH}^2\dd \mu^x_{\R}\nu(\dd x)\\
&\le \frac{1}{R^2}\int_{B(o,3R)}\nu(\dd x)
+\int_{B(o,3R)^c}\int_{{\bf E}_{\R^+}^x(M)}\mu^x_{\R^+}\Big(
\sup_{s\in [0,1]}\rho(x,\gamma(s))\ge \rho(o,x)-2R-1\Big)\nu(\dd x)\\
&\leq \frac{c_6}{R}+c_6\exp(-c_7R^2)\rightarrow 0, R\rightarrow\infty.
\end{align*}
Therefore we have found as series of $\tilde{\Phi}_R$ such that
$\tilde{\Phi}_R\rightarrow1$ $\mu^\nu_\R$-a.e. as $R\rightarrow\infty$ and
$\E_{\R}^\nu(\tilde{\Phi}_R)\rightarrow 0$ as $R \rightarrow \infty$,  so the recurrence follows by
\cite[Theorem 1.6.5]{FOT94}.


\end{proof}

\begin{Remark}\label{loop}
By integration by parts formula obtained in \cite{CLWFL} and carefully tracking the proof of Theorem \ref{t 4-1} and \ref{t4-2a}, we could verify that if
\eqref{r4-1-1} and \eqref{r4-3-2} are true, then the conclusions of  Theorems \ref{t 4-1} and \ref{t4-2a}
still hold for $(\tilde \E_{\R}^\nu, \mathscr{D}(\tilde \E_{R}^\nu))$ with $\nu(\dd x)=p_1(x,x)\dd x$. Here $(\tilde \E_{\R}^\nu, \mathscr{D}(\tilde \E_{R}^\nu))$ is defined in Remark \ref{l o}.

 Furthermore  a similar argument implies that the results in Theorems \ref{t 4-1} and \ref{t4-2a} also hold for  the reference measure given by  $e^{c\int_0^1\mathrm{Scal}(\gamma(s))\dd s}\tilde \mu^\nu(\gamma)$, if
\eqref{r4-1-1} and \eqref{r4-3-2} are true. Here $c\in \mathbb{R}$ and $\mathrm{Scal}$ denotes the scalar curvature.
\end{Remark}

\beg{Remark}\label{Finite volume}{\bf (Finite Volume Case for the line)} For each $A_1,A_2\in[0,\infty)$, we could also construct Wiener measure on $C([-A_2,A_1],M)$. In this case the above results also hold.
\end{Remark}

\section{Ergodicity/ Non-ergodicity}\label{sect4}
\subsection{Half line}\label{sect4}
In this section, we  study the long time behavior of  the Markov process  $X(t), t\geq0,$  and the $L^2$-Dirichlet form
$(\E_{\R^+}^o,\D(\E_{\R^+}^o))$ constructed in Section 2. In fact, we establish some functional inequalities associated with $(\E_{\R^+}^o,\D(\E_{\R^+}^o))$, which gives ergodicity or non-ergodicity of the corresponding Markov process $X(t), t\geq0$.


\subsubsection{$M$ has strictly positive Ricci curvature}

 \beg{thm}\label{T3.1}[Log-Sobolev inequality and Poincar\'e inequality]
\begin{itemize}
\item [(1)]  Suppose that $\Ric\geq K$ for $K>0$, then the log-Sobolev inequality holds
\begin{equation}\label{eq3.2}
\mu_{\R^+}^o(F^2\log F^2)\le 2C(K) \E_{\R^+}^o(F,F),\ \ \ \ F \in \F C^1_{b},
\ \mu_{\R^+}^o(F^2)=1,\end{equation}
where $C(K):=\frac{4}{K^2}$.

\item[(2)] Suppose that $M$ is compact and there exists $\varepsilon \in (0,1)$ such that
\begin{equation}\label{t3-1}
\delta_\varepsilon:=\sup_{T\in [0,\infty)}\delta_{\varepsilon}(T)<\infty,
\end{equation}
where
\begin{equation}\label{t3-1a}
\begin{split}
&\delta_{\varepsilon}(T):=\varepsilon^{-1}\big(1-e^{-\varepsilon T}\big)\int_0^T e^{\varepsilon s}\eta(s)\dd s,
\ \eta(s):=\sup_{x\in M}\mu_{\R^+}^x\Big[\exp\Big(-\int_0^s K(\gamma(r))\dd r\Big)\Big],
\end{split}
\end{equation}
and $K(x):=\inf\{{\rm Ric}_x(X,X); X\in T_x M, |X|=1\}$, $x\in M$. Then the following Poincar\'e inequality holds,
\begin{equation}\label{t3-2}
\mu_{\R^+}^o(F^2)-\mu_{\R^+}^o(F)^2 \le \delta_\varepsilon \E_{\R^+}^o(F,F),\ \ \ \ F \in \F C^1_{b},
\end{equation}
where $\delta_\varepsilon$ is defined by \eqref{t3-1}.
\end{itemize}
\end{thm}

\begin{Remark}\label{r3-1}
Obviously if
\begin{equation}\label{r3-1-1}
\limsup_{t\uparrow \infty}\frac{1}{t}\sup_{x\in M}\log \mu_{\R^+}^x\Big[\exp\Big(-\int_0^s K(\gamma(r))\dd r\Big)\Big]<0,
\end{equation}
then condition \eqref{t3-1} holds.

Moreover, as explained in \cite{ELR93,W99}, condition \eqref{r3-1-1} is equivalent to the spectral positivity of
the operator $L_0=-\Delta+K$ (here $L_0f(x):=\Delta f(x)+K(x)f(x)$). In particular, if ${\rm Ric}\ge K$ for some constant
$K>0$, then \eqref{t3-1} holds.
\end{Remark}

\beg{Remark}\label{r3.2}  \beg{enumerate}

\item[$(i)$] According to \cite{W05}, the log-Sobolev inequality implies hypercontractivity of the associated semigroup $P_t$ and Poincar\'e inequality, which derives the $L^2$-exponential ergodicity of the process:
$\|P_tF-\int F d\mu\|_{L^2}\leq e^{-t/C(K)}\|F\|_{L^2}.$

\item[$(ii)$] Poincar\'e inequality also implies the irreducibility of the Dirichlet form $(\E_{\R^+}^o,\D(\E_{\R^+}^o))$. It is obvious that the Dirichlet form $(\E_{\R^+}^o,\D(\E_{\R^+}^o))$ is recurrent. Combining these two results, by \cite[Theorem 4.7.1]{FOT94}, for any nearly Borel non-exceptional set $B$,
$$\mathbf{P}^z(\sigma_B\circ\theta_n<\infty,\forall n\geq0)=1, \quad \textrm{ for q.e. } z\in {\bf E}_{\R^+}^o(M).$$
Here $\sigma_B=\inf\{t>0:X(t)\in B\}$, $\theta$ is the shift operator for the Markov process $X$, and for the definition of any nearly Borel non-exceptional set we refer to \cite{FOT94}. Moreover by \cite[Theorem 4.7.3]{FOT94} we obtain the following strong law of large numbers: for $f\in L^1({\bf E}_{\R^+}^o(M),\mu_{\R^+}^o)$
$$\lim_{t\rightarrow\infty}\frac{1}{t}\int_0^tf(X(s))\dd s=
\int f\dd\mu^o_{\R^+}, \quad \mathbf{P}^{z}-a.s.,$$
for q.e. $z\in {\bf E}_{\R^+}^o(M)$.

\end{enumerate}
\end{Remark}

\ \newline\emph{\bf Proof of Theorem \ref{T3.1}.}
{\bf Step (1)} By the standard method and the technique in \cite{FW17}(See also \cite{GW06} and \cite{RWZZ17} and references therein), it is not difficult to prove \eqref{eq3.2}. For the reader's convenience, in the following we  give a detailed proof.

By \cite{GW06} we have the martingale representation theorem, that is, for $F\in \F C_b^1$ with the form
\begin{equation}\label{t3-1-1}
\aligned
F(\gamma)=f\left(\int_0^{T_1}  g_1(s,\gamma(s)) \dd s,\int_0^{T_2}   g_2(s,\gamma(s)) \dd s,...,\int_0^{T_m}   g_m(s,\gamma(s)) \dd s\right),\quad \gamma\in {\bf E}^o_{\R^+}(M),\endaligned\end{equation} we have
\begin{equation}\label{eq3.5}F=\mu_{\R^+}^o(F)+\int^T_0\langle H_s^F, \dd \beta_s\rangle ,\end{equation}
where $T=\max{T_i}$, $\beta_s$ is the anti-development of canonical path $\gamma(\cdot)$ (whose distribution
is an $\R^n$-valued Brownian motion under $\mu_{\R^+}^o$) and \begin{equation}\label{eq3.7} H_s^F=\mu_{\R^+}^o\left[M_s^{-1}\int_s^TM_r(DF(r))\dd r\bigg|\F_s\right].\end{equation}
Here and in the following
$(\F_t)$ is the natural filtration generated by $\gamma(\cdot)$, $\mu_{\R^+}^o\left[\cdot|\F_t\right]$ denotes the
 conditional expectation under $\mu_{\R^+}^o$ and $M_t$ is the solution of the equation
\begin{equation}\label{eq3.4}
\frac{\dd}{\dd t}M_t+\frac{1}{2}M_t\Ric_{U_t}=0,\quad M_0=I.\end{equation}
Let $F=G^2$ for $G\in \F C_b^1$ being strictly positive and with the form \eqref{t3-1-1}, consider the continuous version of the martingale $N_s=\EE[F|\F_s]$.
By the lower bound of the Ricci curvature it is easy to verify for every $0\le s\le r<\infty$
\begin{equation}\label{eq3.6}
\|M_s^{-1}M_r\|\leq \exp\Big(-\frac{1}{2}\int_s^r K(\gamma(t))\dd t\Big)\le \exp\Big(-\frac{K(r-s)}{2}\Big),
\end{equation}
where $\|\cdot\|$ denotes the matrix norm.
Then we can take the conditional expectation $\mu_{\R^+}^o[\cdot|\F_s]$ in \eqref{eq3.5} to obtain
\begin{equation}\label{t3-3}
N_s=\mu_{\R^+}^o[F]+\int_0^s\langle H_r^F,\dd \beta_r\rangle.
\end{equation}
Now applying It\^{o}'s formula to $N_s\log N_s$, we have
\begin{equation}\label{eq3.8}
\begin{split}
&\quad \mu_{\R^+}^o\left( G^2\log G^2\right)-\mu_{\R^+}^o(G^2)\log \mu_{\R^+}^o(G^2)\\
&=\mu_{\R^+}^o\left( N_T\log N_T\right)-\mu_{\R^+}^o\left(N_0\log N_0\right)=\frac{1}{2}
\mu_{\R^+}^o\left[\int_0^TN_s^{-1}|H_s^F|^2\dd s\right].
\end{split}
\end{equation}
Here and in the following we use $|\cdot|$ to denote the norm in $\mathbb{R}^d$.
Note that
$$DF=D(G^2)=2GDG.$$
Using this relation in the explicit formula \eqref{eq3.7} for $H^F$, we have
\begin{equation}\label{eq3.9}H_s^F=2\mu_{\R^+}^o\left[GM_s^{-1}\int_s^TM_r DG(r) \dd r\bigg|\F_s\right].\end{equation}

By Cauchy-Schwarz inequality in \eqref{eq3.9} and \eqref{eq3.6}, we have
$$|H_s^F|^2\leq 4 \mu_{\R^+}^o[G^2|\F_s]\mu_{\R^+}^o\bigg[\bigg(\int_s^Te^{-K(r-s)/2}|D G(r)|\dd r\bigg)^2\bigg|\F_s\bigg].$$
Thus the right hand side of \eqref{eq3.8} can be controlled by
\begin{equation}\label{eq3.1}\aligned 2\mu_{\R^+}^o
\bigg[\int_0^T\bigg(\int_s^Te^{-K(r-s)/2}|DG(r)|\dd r\bigg)^2\dd s\bigg].\endaligned\end{equation}
By H\"{o}lder's inequality we have
$$\aligned \bigg(\int_s^Te^{-K(r-s)/2}|D G(r)|\dd r\bigg)^2\leq \int_s^Te^{-K(r-s)/2}\dd r\int_s^Te^{-K(r-s)/2}|D G(r)|^2\dd r.\endaligned$$
Then  changing the order of integration we obtain
$$\aligned \mu_{\R^+}^o\bigg(\int_0^T\bigg(\int_s^Te^{-K(r-s)/2}|D G(r)|\dd\tau\bigg)^2\dd s\bigg)
\leq \mu_{\R^+}^o\bigg(\int_0^TJ_1(s,T)|DG(s)|^2\dd s\bigg),\endaligned$$
where
$$\aligned J_1(s,T):=&\int_0^s\frac{2}{K}\big(1-e^{-K(T-t)/2}\big)e^{-K(s-t)/2}\dd t\\=&\frac{2}{K^2}\bigg[2(1-e^{-\frac{Ks}{2}})-e^{-\frac{K(T-s)}{2}}
+e^{-\frac{K(T+s)}{2}}\bigg]\leq \frac{4}{K^2},\ \ \forall\ s\in [0,T]\endaligned$$
Hence
$$\aligned &\mu_{\R^+}^o\bigg(\int_0^T\bigg[\bigg(\int_s^Te^{-K(r-s)/2}|DG(r)|\dd r\bigg)^2\bigg]\dd s\bigg)
\leq 
\frac{4}{K^2}\E_{\R^+}^o(G,G).\endaligned$$
Combining all above estimates into \eqref{eq3.8}, we complete the proof for \eqref{eq3.2}.

{\bf Step (2)} Some proof in this step is inspired by that of \cite[Theorem 1]{W99}. Still applying It\^o formula to $N_s^2$ (where $N_s=\mu_{\R^+}^o[F|\F_s]$ and $F\in \F C_b^1$ with the form \eqref{t3-1-1} )
we arrive at
\begin{equation}\label{t3-1-4}
\begin{split}
&\quad \mu_{\R^+}^o(F^2)-\mu_{\R^+}^o(F)^2\\
&=\mu_{\R^+}^o(N_T^2)-\mu_{\R^+}^o(N_0^2)=\mu_{\R^+}^o\Big(\int_0^T |H_s^F|^2 \dd s\Big).
\end{split}
\end{equation}
By \eqref{eq3.7}, \eqref{eq3.6}, Markov property and Cauchy-Schwartz inequality we obtain
\begin{equation*}
\begin{split}
&\left|H_s^F\right|^2 \le \mu_{\R^+}^o\left[\int_s^T \exp\left(-\int_s^r K(\gamma(t))\dd t\right)e^{-\varepsilon(T-r)}\dd r\Big|\F_s\right]
\mu_{\R^+}^o\left[\int_s^T e^{\varepsilon(T-r)}|DF(r)|^2\dd r\Big|\F_s\right] \\
&\le \left(\int_s^T \sup_{x\in M}\mu_{\R^+}^x\left[\exp\left(-\int_0^{r-s}K(\gamma(t))\dd t\right)\right]e^{-\varepsilon(T-r)}\dd r\right)
\mu_{\R^+}^o\left[\int_s^T e^{\varepsilon(T-r)}|DF(r)|^2\dd r\Big|\F_s\right]\\
&=\left(\int_s^T \eta(r-s)e^{-\varepsilon(T-r)}\dd r\right)\mu_{\R^+}^o\left[\int_s^T e^{\varepsilon(T-r)}|DF(r)|^2\dd r\Big|\F_s\right],
\end{split}
\end{equation*}
where in the second inequality we used the Markov property of the canonical process $\gamma(\cdot)$ and $\eta(t)$ is defined
by \eqref{t3-1a}.
Therefore let $\phi(t):=\int_0^t\Big(\int_s^T \eta(r-s)e^{-\varepsilon(T-r)}\dd r\Big)\dd s$, $t\in [0,T]$ it holds
\begin{align*}
\mu_{\R^+}^o\Big(\int_0^T|H_s^F|^2\dd s\Big)&\le \int_0^T
\Big(\int_s^T \eta(r-s)e^{-\varepsilon(T-r)}\dd r\Big)\Big(\int_s^T e^{\varepsilon(T-r)}\mu_{\R^+}^o(|DF(r)|^2)\dd r\Big)\dd s\\
&=\int_0^T \phi'(s)\Big(\int_s^T e^{\varepsilon(T-r)}\mu_{\R^+}^o(|DF(r)|^2)\dd r\Big)\dd s\\
&=\mu_{\R^+}^o\Big(\int_0^T \phi(r)e^{\varepsilon (T-r)}|DF(r)|^2 \dd r\Big).
\end{align*}
Since by elementary calculation it is easy to check $\sup_{r\in [0,T]}\phi(r)e^{\varepsilon (T-r)}\le \delta_{\varepsilon}(T)$,
combining all the estimates into \eqref{t3-1-4} yields \eqref{t3-2}.
$\hfill\square$

\subsubsection{$M=\R^n$}

 In this subsection we  consider the case that $M=\R^n$ and $o=0\in \R^n$ and we use $X_t$ to denote $X(t)$ for simplicity. As mentioned in the introduction, it is easy to see that the Markov process $(X_t)_{t\ge 0}$ associated with $(\E_{\R^+}^o, \D(\E_{\R^+}^o))$ is the
unique solution to the following stochastic heat equations on $\R^+\times \R^+$
\begin{equation}\label{e4-3}
\begin{split}
\partial_t X_t=&\frac{1}{2}\Delta X_t+ \xi,\ t>0,\\
X_t(0)=&0,\ t>0,\\
X_0(\cdot)=&\gamma(\cdot)\in {\bf E}^o_{\R^+}(\R^n)
\end{split}
\end{equation}
where $\xi$ denotes an standard  $\R^n$-valued space-time white noise on $\R^+\times \R^+$ (on some probability $(\Omega,\mathscr{F},{\bf P})$).
In the Euclidean space, we have the following ergodicity results. In this case,  the exponential ergodicity does not hold any more, which implies that the $L^2$-spectral gap is zero.

\begin{thm}\label{t4-1}
Suppose $M=\R^n$, then the following statements hold
\begin{itemize}
\item [(1)] For every $F\in L^2({\bf E}^o_{\R^+}(\R^n); \mu_{\R^+}^o)$ 
we have 
\begin{equation}\label{t4-1-1}
\lim_{t \rightarrow \infty}\mu_{\R^+}^o\Big(\big|P_tF(\gamma)-\mu^o_{\R^+}(F)\big|^2\Big)=0,\
\end{equation}
where $P_tF(\gamma):={\bf E}\big[F(X_t^{\gamma})\big]$, $(X_t^{\gamma})_{t\ge 0}$ is the solution to \eqref{e4-3} with initial value $X_0(\cdot)=\gamma$.

\item[(2)] 
The  Poincar\'e inequality does not hold, i.e.  for any $C>0$, there exists $F\in \D(\E_{\R^+}^o)$ such that
\begin{equation}\label{t4-1-2}
\mu_{\R^+}^o(F^2)-\mu_{\R^+}^o(F)^2\geq C\E_{\R^+}^o(F,F).
\end{equation}
In particular, the spectral gap
$$C_{\R^+}(SG):=\inf_{F\neq \textrm{const}, F\in \D(\E_{\R^+}^o)}\frac{\E_{\R^+}^o(F,F)}{\mu_{\R^+}^o(F^2)-\mu_{\R^+}^o(F)^2}=0,$$
and the exponential ergodicity does not hold in this case.
\end{itemize}
\end{thm}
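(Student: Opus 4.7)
For $M=\R^n$ the SPDE \eqref{e4-3} is the linear Gaussian infinite-dimensional Ornstein--Uhlenbeck equation $\dd X_t=\tfrac12\Delta_D X_t\,\dd t+\dd W_t$ on $L^2(\R^+;\R^n)$, where $\Delta_D$ is the Laplacian on $\R^+$ with Dirichlet boundary condition at $0$. Its unique Gaussian invariant measure has covariance operator $(-\Delta_D)^{-1}$, whose kernel is $\min(x,y)$; this is precisely $\mu^o_{\R^+}$. The crux of both parts is the spectral behavior of $-\Delta_D$: its kernel is trivial (any $L^2(\R^+)$-solution of $u''=0$ with $u(0)=0$ vanishes), but its spectrum is $[0,\infty)$ with no gap at zero.

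For part (1) my plan is to use the mild decomposition $X^\gamma_t=S_t\gamma+Z_t$ with $S_t=e^{t\Delta_D/2}$ and $Z_t$ a centered Gaussian field independent of $\gamma$ having covariance $\int_0^t p^D_{2(t-s)}(x,y)\,\dd s=\tfrac12\int_0^{2t}p^D_u(x,y)\,\dd u$, which converges to $\min(x,y)$ as $t\to\infty$. Using the reflection identity $p^D_t(x,y)=\tfrac{2}{\sqrt{2\pi t}}e^{-(x^2+y^2)/(2t)}\sinh(xy/t)$ one shows $\mu^o_{\R^+}(|S_t\gamma(x)|^2)=O(t^{-1/2})$ for each fixed $x$. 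Consequently, for a cylinder $F(\gamma)=f(\gamma(r_1),\ldots,\gamma(r_m))$ with $f\in C^1_b$, $P_tF(\gamma)$ is the integral of $f$ against a Gaussian measure on $\R^{nm}$ with mean $((S_t\gamma)(r_1),\ldots,(S_t\gamma)(r_m))$ (linear in $\gamma$) and deterministic covariance $\Sigma_t\to\Sigma_\infty$ (the BM covariance). A standard Gaussian interpolation estimate yields $|P_tF(\gamma)-\mu^o_{\R^+}(F)|\le C_f\bigl(\sum_{i=1}^m|S_t\gamma(r_i)|+\|\Sigma_t-\Sigma_\infty\|\bigr)$; squaring and integrating against $\mu^o_{\R^+}$ then gives $\|P_tF-\mu^o_{\R^+}(F)\|_{L^2(\mu^o_{\R^+})}\to 0$. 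The same argument applies to the general cylinders $F\in\F C^1_b$ (the integrals $\int_0^{T_j}g_j(s,X^\gamma_t(s))\,\dd s$ are still Gaussian-linear in $\gamma$ through $S_t\gamma$) and extends to every $F\in L^2(\mu^o_{\R^+})$ by density.

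For part (2) my plan is to exhibit $\{F_n\}\subset\F C^1_b$ with $\E^o_{\R^+}(F_n,F_n)\le\tfrac12$ but $\mu^o_{\R^+}(F_n^2)-\mu^o_{\R^+}(F_n)^2\to\infty$. Fix $R>0$ and pick $g\in C^\infty_b(\R)$ with $g(x)=x$ on $[-R,R]$, $g(x)=\pm R$ for $\pm x\ge 2R$, and $|g'|\le 1$. Put $F_n(\gamma):=\tfrac{1}{\sqrt n}\int_0^n g(\gamma^1(s))\,\dd s$ where $\gamma^1$ is the first coordinate of $\gamma$; composing the outer map $y\mapsto y/\sqrt n$ with a smooth bounded truncation of slope one on an interval containing the range $[-R\sqrt n,R\sqrt n]$ of $F_n$ places it in $\F C^1_b$ without altering its value. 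Formula \eqref{eq2.3} gives $DF_n(\gamma)(s)=\tfrac{1}{\sqrt n}g'(\gamma^1(s))e_1\mathbf{1}_{[0,n]}(s)$, so $|g'|\le 1$ yields $\E^o_{\R^+}(F_n,F_n)\le\tfrac12$. For the variance, note that $g(\gamma^1(s))\to R\,\mathrm{sign}(\gamma^1(s))$ in $L^2(\mu^o_{\R^+})$ as $s\to\infty$ (since $\P(|\gamma^1(s)|\le 2R)\to 0$), and for $0<s<r$ the pair $(\gamma^1(s),\gamma^1(r))$ is centered Gaussian with correlation $\sqrt{s/r}$, so by Sheppard's formula
\[
\EE\bigl[\mathrm{sign}(\gamma^1(s))\mathrm{sign}(\gamma^1(r))\bigr]=\tfrac{2}{\pi}\arcsin\sqrt{s/r}.
\]
Integrating this long-range correlation over $[0,n]^2$ and rescaling $s=nu$, $r=nv$ yields $\mu^o_{\R^+}(F_n^2)-\mu^o_{\R^+}(F_n)^2\ge cR^2 n$ for a universal $c>0$ and all $n$ sufficiently large. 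Hence $\E^o_{\R^+}(F_n,F_n)/\bigl(\mu^o_{\R^+}(F_n^2)-\mu^o_{\R^+}(F_n)^2\bigr)\to 0$, which rules out any Poincar\'e inequality with finite constant and forces $C_{\R^+}(SG)=0$; in particular exponential ergodicity fails.

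The main obstacle lies in part (1): one must upgrade the Gaussian convergence of means and covariances to $L^2(\mu^o_{\R^+})$-convergence of $P_tF$ uniformly in $\gamma$. The interpolation estimate above is clean for $F$ depending smoothly on finitely many point evaluations, but for a general cylinder $F\in\F C^1_b$ with coefficient functions $g_j(s,x)$ only bounded and $C^1_b$ in $x$ one must combine the interpolation with uniform boundedness of $f$ and integrability in $s$ to obtain the required estimate; this is technical but routine. In part (2) the only subtlety is that the linear regime $|\gamma^1(s)|\le R$ contributes only $O(R^2\sqrt n)$ to the variance, which is of lower order than the leading $cR^2n$ coming from the saturated (sign) regime, so the lower bound is unaffected.
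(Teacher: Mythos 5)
Your part (1) follows essentially the same route as the paper: the mild-solution decomposition $X_t^\gamma=S_t\gamma+Z_t$, vanishing of the deterministic part, and convergence of the covariance $\tfrac12\int_0^{2t}p^D_u(x,y)\,\dd u\to x\wedge y$ of the stochastic convolution to the Brownian covariance. The paper kills $S_t\gamma$ almost surely via the law of the iterated logarithm (which gives $\gamma(y)/y\to 0$) and then invokes dominated convergence, while you use the quantitative bound $\mu^o_{\R^+}\big(|S_t\gamma(x)|^2\big)=O(t^{-1/2})$; either works. Part (2) is where you genuinely diverge. The paper tests the Poincar\'e inequality against the unbounded linear functionals $F_T(\gamma):=\int_0^T\gamma_1(s)\,\dd s$, for which $\mu^o_{\R^+}(F_T)=0$, $\mu^o_{\R^+}(F_T^2)=\int_0^T\int_0^T(s\wedge t)\,\dd s\,\dd t\ge T^3/6$ and $\E^o_{\R^+}(F_T,F_T)\le T$ because $|DF_T(\gamma)(s)|\le 1_{(0,T]}(s)$; letting $T\to\infty$ gives the contradiction in three lines. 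Your bounded truncated functionals $F_n=\tfrac{1}{\sqrt n}\int_0^n g(\gamma^1(s))\,\dd s$ with the Sheppard arcsine identity also work: the sign correlation $\tfrac{2}{\pi}\arcsin\sqrt{s/r}$ is scale invariant, so the variance grows like $cR^2n$ against energy at most $\tfrac12$. The paper's choice buys brevity (the covariance $s\wedge t$ is exact, with no truncation error to control); yours buys that the test functions are bounded, so membership in $\F C_b^1$, hence in $\D(\E^o_{\R^+})$, is immediate after the outer truncation, whereas the paper must appeal to the approximation scheme of Lemma \ref{l2.8} to place the unbounded $F_T$ in the domain. One minor bookkeeping point on your side: the cross terms between the sign part and the error $e_s:=g(\gamma^1(s))-R\,\mathrm{sign}(\gamma^1(s))$ contribute $O(R^{5/2}n^{3/4})$ rather than $O(R^2\sqrt n)$, but this is still $o(n)$ and does not affect the conclusion.
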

\begin{proof}
{\bf Step (1)} As explained in \cite[Page 315]{FX}, the solution $X_t$ to \eqref{e4-3} with initial value
$X_0(\cdot)=\gamma$ has the following expression,
\begin{equation*}
\begin{split}
X_t^{\gamma}(x)&=\int_{\R^+} p(t,x,y)\gamma(y)\dd y+\int_0^t\int_{\R^+}p(t-s,x,y)\xi(\dd s,\dd y)\\
&:=U_1(t,x)+U_2(t,x),
\end{split}
\end{equation*}
where $p(t,x,y)$ is the Dirichlet heat kernel on $\R^+$ with the following expression
\begin{equation*}
p(t,x,y)=\frac{1}{\sqrt{2\pi t}}\Big[\exp\big(-\frac{(x-y)^2}{2t}\big)-
\exp\big(-\frac{(x+y)^2}{2t}\big)\Big],\ \ x,y\in \R^+,\ t>0.
\end{equation*}

By \cite[Lemma 4.3]{FX} and the law of iterated logarithm (which implies $\lim_{y \rightarrow +\infty}
\frac{\gamma(y)}{y}=0$ for $\mu^o_{\R^+}$-a.s. $\gamma\in {\bf E}^o_{\R^+}(\R^n)$), it is easy to verify
that for $\mu_{\R^+}^o$-a.s. $\gamma \in {\bf E}^o_{\R^+}(\R^n)$ and every $x\in \R^+$,
$$ \lim_{t \rightarrow +\infty}U_1(t,x)=0.$$

Note that $U_2(t,\cdot)=\big(U_2^1(t,\cdot),\dots, U_2^n(t,\cdot)\big)$ is a centered Gaussian vector on $L^2(\R^+; e^{-rx}\dd x)$,
and for every
$x,y \in \R^+$ it holds
\begin{align*}
&\lim_{t \uparrow \infty}{\bf E}[U_2^i(t,x)U_2^j(t,y)]\\
&=
\lim_{t\uparrow \infty}{\bf E}\Big[\Big(\int_0^t \int_{\R^+}p(t-s,x,z)\xi^i(\dd s,\dd z)\Big)
\Big(\int_0^t \int_{\R^+}p(t-s,y,z)\xi^j(\dd s,\dd z)\Big)\Big]\\
&=\delta_i^j\lim_{t\uparrow \infty}\int_0^t\int_{\R^+}p(t-s,x,z)p(t-s,y,z)\dd z \dd s\\
&=\delta_i^j\lim_{t\uparrow \infty}\int_0^t p(2(t-s),x,y)\dd s=
\delta_i^j\lim_{t\uparrow \infty}\frac{1}{2}\int_0^{2t} p(s,x,y)\dd s=
\delta_i^j(x\wedge y),\ 1\le i,j\le n,
\end{align*}
where the last calculation can be found in \cite[Section 2.3]{CZ}, $\delta_i^j=1$ when $i=j$ and $\delta_i^j=0$ when $i\neq j$.

This implies that $U_2(t,\cdot)$ converges weakly in $L^2(\R^+; e^{-rx}\dd x)$ as $t \uparrow \infty$
to a Gaussian random vector whose distribution is $\mu^o_{\R^+}$. Combining all the estimates above we know that for
$\mu_{\R^+}^o$-a.s. $\gamma \in {\bf E}_{\R^+}^o(\R^n)$,
$X_t^\gamma(\cdot)$ converges weakly  on $L^2(\R^+; e^{-rx}\dd x)$ as $t \uparrow \infty$
to a Gaussian random vector whose distribution is $\mu^o_{\R^+}$. Thus for
$\mu_{\R^+}^o$-a.s. $\gamma \in {\bf E}_{\R^+}^o(\R^n)$ and every $F \in \F C_b^1$ we have
\begin{equation*}
\lim_{t \rightarrow \infty} P_t F(\gamma)=\mu^o_{\R^+}(F).
\end{equation*}
By this and the dominated convergence theorem
we obtain \eqref{t4-1-1} holds for $F\in \F C_b^1$ immediately. By approximations we can easily check that \eqref{t4-1-1} holds for $F\in L^2({\bf E}^o_{\R^+}(\R^n); \mu_{\R^+}^o)$, which implies that $\mu_{\R^+}^o$ is ergodic.

{\bf Step (2)} We first suppose the Poincar\'{e} inequality holds, i.e. for $F\in \D(\E^o_{\R^+})$
\begin{equation}\label{Poeq}\mu_{\R^+}^o(F^2)-\mu_{\R^+}^o(F)^2\leq C\E_{\R^+}^o(F,F)\end{equation}
 for some $C>0$.
For a fixed $T>0$, let $F_T(\gamma):=\int_0^T \gamma_1(s)\dd s$, where $\gamma_1(s)$ denotes the first
coordinate of process $\gamma(s):=(\gamma_1(s),\cdots ,\gamma_n(s))$. By the proof of Lemma \ref{l2.8}, it is not difficult to verify that
$F_T\in \D(\E_{\R^+}^o)$.

At the same time, we have for $o=0\in \R^n$
\begin{align*}
& \mu_{\R^+}^o(F_T^2)=\mu_{\R^+}^o\Big(\int_0^T\int_0^T\gamma_1(s)\gamma_1(t)\dd s \dd t\Big)\\
&=\int_0^T\int_0^T \mu_{\R^+}^o\Big(\gamma_1(s)\gamma_1(t)\Big)\dd s \dd t=\int_0^T\int_0^T (s\wedge t) \dd s \dd t\\
&\ge \int_0^T \int_0^t s\dd s \dd t\ge \frac{T^3}{6},
\end{align*}
$$\mu_{\R^+}^o(F_T)=\int_0^T \mu_{\R^+}^o(\gamma_1(s))\dd s= 0,
$$
and
$$\E_{\R^+}^o(F_T)=\int_{{\bf E}_{\R^+}^o(\R^n)}|DF_T(\gamma)|_{\HH_+}^2 \dd \mu_{\R^+}^o\le T.
$$
Here we have applied the property that $|DF_T(\gamma)(s)|\le 1_{[0,T]}(s)$. Combining all the estimates above and putting
$F_T$ into \eqref{Poeq} we arrive at
$\frac{T^3}{6}\le CT$. Then letting $T\rightarrow \infty$ we get $C=+\infty$ and there is a contradiction. So
\eqref{Poeq} does not hold for any $C>0$. The results for spectral gap follow from \cite{W05}.
\end{proof}

\begin{Remark}
By carefully tracking the proof of Theorem \ref{t4-1}, it is not difficult to verify that
the conclusion of Theorem \ref{t4-1} still holds for every initial point $o\in \R^n$, not only
$o=0$.
\end{Remark}

\subsubsection{$M$ is not a Liouville manifold}
In this subsection, we  prove that when $M$ is not a Liouville manifold,  $(\E_{\R^+}^o,\D(\E_{\R^+}^o))$ is reducible, which by \cite[Propsition 2.1.6]{CF12} implies that the Markov semigroup $(P_t)_{t\geq0}$ constructed in Theorem \ref{T2.2} is non-ergodic in the sense that there exists a non-constant function $F\in \D(\E_{\R^+}^o)$ such that $P_t F=F$ $\mu^o_{\R^+}$-a.s..

Recall that we call a connected Riemannian manifold $M$ a Louville manifold, if there does not exist
a non-constant bounded harmonic function on $M$. In particular, if $M$ is not a Liouville manifold, then there exists
a bounded harmonic function $u:M\rightarrow \R$ which is not a constant.

\begin{thm}\label{t4-2}
If $M$ is not a Liouville manifold, then $(\E_{\R^+}^o,\D(\E_{\R^+}^o))$ is reducible. Hence $\mu_{\R^+}^o$ is not
ergodic for the Markov process associated with $(\E_{\R^+}^o,\D(\E_{\R^+}^o))$.
\end{thm}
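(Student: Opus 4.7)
Since $M$ is not Liouville, there exists a bounded non-constant harmonic function $u:M\to\R$. Under $\mu^o_{\R^+}$ the coordinate process $\gamma(\cdot)$ is Brownian motion starting at $o$, so $u\circ\gamma$ is a bounded martingale and will converge $\mu^o_{\R^+}$-a.s.\ and in $L^2$ to a limit $u_\infty(\gamma):=\lim_{t\to\infty}u(\gamma(t))$. The plan is to exhibit $u_\infty$ as a non-constant element of $\D(\E^o_{\R^+})$ with $\E^o_{\R^+}(u_\infty,u_\infty)=0$; combined with \cite[Proposition 2.1.6]{CF12} (cited in the paragraph preceding the theorem) this will give reducibility and non-ergodicity.

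\textbf{Non-triviality of $u_\infty$.} If $u_\infty$ were $\mu^o_{\R^+}$-a.s.\ equal to some constant $c$, the martingale identity $u(\gamma(t))=\mu^o_{\R^+}[u_\infty\mid\F_t]=c$ would hold for every $t>0$, $\mu^o_{\R^+}$-a.s. Since for each such $t$ the law of $\gamma(t)$ has strictly positive heat-kernel density $p_t(o,\cdot)$ on the connected manifold $M$, this forces $u\equiv c$ by continuity, contradicting the choice of $u$. Hence $u_\infty$ is non-constant in $L^2({\bf E}^o_{\R^+}(M);\mu^o_{\R^+})$.

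\textbf{Approximation and energy bound.} Using the anti-development from Lemma \ref{l2.6}, It\^o's formula together with $\Delta u=0$ gives $u(\gamma(t))-u(o)=\int_0^t\langle U_s^{-1}\nabla u(\gamma(s)),\dd\beta_s\rangle$, so It\^o isometry and boundedness of $u$ yield the global estimate
\[C_u:=\int_{{\bf E}^o_{\R^+}(M)}\int_0^\infty|\nabla u(\gamma(s))|^2\dd s\,\dd\mu^o_{\R^+}\leq\|u\|_\infty^2-u(o)^2<\infty.\]
Fix smooth cutoffs $h_m\in C_c^\infty(M)$ with $h_m\equiv 1$ on $D_{m-1}$, $\supp h_m\subset D_m$ and $\sup_m\|\nabla h_m\|_\infty<\infty$, and set $g_m:=u\,h_m$, which lies in $C_b^\infty(M)$ with bounded gradient. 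By Lemma \ref{l2.8}\,(1) applied to a compactly truncated version of $f(x)=x/T$ (which does not change the function's values since the argument is bounded by $\|u\|_\infty$), the Ces\`aro average
\[F_T^m(\gamma):=\frac{1}{T}\int_0^T g_m(\gamma(s))\dd s\]
belongs to $\D(\E^o_{\R^+})$ with $DF_T^m(\gamma)(s)=\frac{1}{T}U_s^{-1}\nabla g_m(\gamma(s))1_{(0,T]}(s)$. Letting $m\to\infty$ gives $F_T^m\to F_T:=\frac{1}{T}\int_0^T u(\gamma(s))\dd s$ in $L^2(\mu^o_{\R^+})$ by dominated convergence. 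Moreover, since $\nabla g_m(x)=\nabla u(x)$ whenever $x\in D_{m-1}$, the decomposition $|\nabla g_m|^2\leq 2|\nabla u|^2+2\|u\|_\infty^2\|\nabla h_m\|_\infty^2 1_{D_m\setminus D_{m-1}}$ together with the non-explosion of $\gamma(\cdot)$ (implying $\mu^o_{\R^+}(\gamma(s)\in D_m\setminus D_{m-1})\to 0$ for each $s$) will give
\[\limsup_{m\to\infty}\E^o_{\R^+}(F_T^m,F_T^m)\leq\frac{1}{2T^2}\int_0^T\int|\nabla u(\gamma(s))|^2\dd\mu^o_{\R^+}\dd s\leq\frac{C_u}{2T^2}.\]
An application of \cite[Lemma I-2.12]{MR92} (closedness and lower semicontinuity of $\E^o_{\R^+}$) then yields $F_T\in\D(\E^o_{\R^+})$ with $\E^o_{\R^+}(F_T,F_T)\leq C_u/(2T^2)$.

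\textbf{Conclusion.} The pointwise martingale convergence $u(\gamma(s))\to u_\infty(\gamma)$ and the Ces\`aro averaging theorem together give $F_T\to u_\infty$ $\mu^o_{\R^+}$-a.s., and the uniform bound $|F_T|\leq\|u\|_\infty$ upgrades this to convergence in $L^2(\mu^o_{\R^+})$. Combined with $\E^o_{\R^+}(F_T,F_T)\leq C_u/(2T^2)\to 0$, a second application of \cite[Lemma I-2.12]{MR92} will produce $u_\infty\in\D(\E^o_{\R^+})$ with $\E^o_{\R^+}(u_\infty,u_\infty)=0$; together with the non-triviality established above, this is exactly the reducibility of $(\E^o_{\R^+},\D(\E^o_{\R^+}))$. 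The main subtle point is the double approximation --- first $m\to\infty$ to remove the spatial cutoff and then $T\to\infty$ to extract the tail functional --- and this is permissible precisely because the bound $C_u<\infty$ is obtained from boundedness of $u$ alone, with no curvature hypothesis on $M$.
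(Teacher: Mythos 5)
Your proof is correct and follows essentially the same route as the paper's: take a bounded non-constant harmonic function $u$, form the Ces\`aro averages $F_T=\frac{1}{T}\int_0^T u(\gamma(s))\,\dd s$, put them in $\D(\E_{\R^+}^o)$ by a spatial cutoff plus \cite[Lemma I-2.12]{MR92}, bound $\E_{\R^+}^o(F_T,F_T)=O(T^{-2})$ via the It\^o/martingale identity for $u(\gamma(\cdot))$, and pass to the martingale limit $u_\infty=N_\infty$ to get a non-constant zero-energy element. The only (welcome) addition is that you explicitly justify the non-constancy of $u_\infty$ via strict positivity of the heat kernel, a point the paper's proof asserts without comment.
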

\begin{proof}
The following argument follows essentially from  \cite[Theorem 4.3]{A961} and \cite[Theorem 1.5]{W04}. Since
$M$ is not Louville manifold, we could find a non-constant harmonic function $u: M\rightarrow \R$. For
every fixed $T>0$, we define $F_{T}:=\frac{1}{T}\int_0^T u(\gamma(t))\dd t$. Since $u$ is harmonic, by It\^o's formula
we obtain
\begin{equation}\label{t4-2-0}
u(\gamma(t))-u(o)=\int_0^t \langle \nabla u(\gamma(s)), U_s(\gamma)\dd \beta_s\rangle_{T_{\gamma(s)}M},
\end{equation}
where $\beta_s$ denotes the anti-development of $\gamma(\cdot)$, whose law is an $\R^n$-valued Brownian motion
under $\mu_{\R^+}^o$. Thus $N_t:=u(\gamma(t))-u(o)$ is a bounded martingale, according to the martingale
convergence theorem, there is a non-constant random variable $N_{\infty}$ such that
\begin{equation*}
\lim_{t\uparrow \infty}\mu_{\R^+}^o\big(\big|N_t-N_{\infty}\big|^2\big)=0,
\end{equation*}
which implies immediately
\begin{equation}\label{t4-2-1}
\lim_{T\uparrow \infty}\mu_{\R^+}^o\big(\big|F_T-N_{\infty}\big|^2\big)=0.
\end{equation}
On the other hand, set $F_T^R:=\frac{1}{T}\int_0^T\phi_R(\rho(o,\gamma(s)))u(\gamma(s))\dd s$, where $o\in M$, $\phi_R$ is defined as in the proof of Theorem \ref{t 4-1}.
Then by Lemma \ref{l2.8} it is easy to see that $F_T^R\in \D(\E^o_{\R^+})$ for $R, T>0$.
Note that for fixed $T>0$,  $F_T^R\rightarrow F_T$ in $L^2({\bf{E}}_{\R^+}^o(M), \mu_{\R^+}^o)$, as $R\rightarrow\infty$. We also have
$$\aligned\E_{\R^+}^o(F_T^R,F_T^R)\leq& \frac{1}{T^2}\int_0^T
\mu_{\R^+}^o\big(|\nabla u(\gamma(s))|^2\big)\dd s+\frac{4}{T^2}\int_0^T
\mu_{\R^+}^o\big(|u(\gamma(s))|^2\big)\dd s\\
\leq&\frac{1}{T^2}
\mu_{\R^+}^o\Big(\big|u(\gamma(T))-u(o)\big|^2\Big)+C_1\leq C,\endaligned$$
where $C, C_1$ are  constants independent of $R$ and  the second inequality follows from \eqref{t4-2-0}. This by \cite[Lemma I-2.12]{MR92} implies that $F_T\in \D(\E^o_{\R^+})$ and
$$DF_T(\gamma)(s)=\frac{1}{T}\big(U_s(\gamma)^{-1}
\nabla u(\gamma(s))\big)1_{[0,T]}(s),$$
 hence
\begin{equation}\label{t4-2-2}
\begin{split}
\lim_{T \uparrow \infty}\E_{\R^+}^o(F_T,F_T)&=\lim_{T \uparrow \infty}\frac{1}{T^2}\int_0^T
\mu_{\R^+}^o\big(|\nabla u(\gamma(s))|^2\big)\dd s\\
&=\lim_{T \uparrow \infty}\frac{1}{T^2}
\mu_{\R^+}^o\Big(\big|u(\gamma(T))-u(o)\big|^2\Big)\le
\lim_{T \uparrow \infty}\frac{4\|u\|_{\infty}}{T^2}=0,
\end{split}
\end{equation}
where the second equality follows from \eqref{t4-2-0}.

Combining \eqref{t4-2-1}, \eqref{t4-2-2} with the closbility of $(\E_{\R^+}^o,\D(\E_{\R^+}^o))$ yields that
$N_{\infty}$ is not a constant, $N_{\infty}\in \D(\E_{\R^+}^o)$ and $\E_{\R^+}^o(N_{\infty},N_{\infty})=0$. So
$(\E_{\R^+}^o,\D(\E_{\R^+}^o))$  is reducible.
\end{proof}

Note that if $M$ is a Cartan-Hadamard manifold with section curvature $-c_1(\rho(o,x)\vee1)^2\le {\rm Sec}_x(X_1,X_2)\le
-c_2(\rho(o,x)\vee1)^{-2}$ for some $c_1,c_2>0$ and every $x\in M$, $X_1,X_2\in T_xM$ with $|X_1|=|X_2|=1$, then $M$ is not a Louville manifold
(where ${\rm Sec}_x$ denotes the sectional Curvature tensor at $x\in M$).
So we have the following
result immediately.

\begin{cor}
If  $M$ is a Cartan-Hadamard manifold with section curvature $-c_1(\rho(o,x)\vee1)^2\le {\rm Sec}_x(X_1,X_2)\le
-c_2(\rho(o,x)\vee1)^{-2}$ for some $c_1,c_2>0$ and every $x\in M$, $X_1,X_2\in T_x M$ with $|X_1|=|X_2|=1$, then $(\E_{\R^+}^o,\D(\E_{\R^+}^o))$ is reducible. 
Hence $\mu_{\R^+}^o$ is not
ergodic for the Markov process associated with $(\E_{\R^+}^o,\D(\E_{\R^+}^o))$ constructed in Theorem \ref{T2.2}.
\end{cor}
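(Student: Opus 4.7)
The plan is to deduce this corollary directly from Theorem \ref{t4-2}. Since that theorem already reduces everything to the non-Liouville property of $M$, the only real task is to verify that a Cartan--Hadamard manifold whose sectional curvature satisfies the two-sided pinching
\[
-c_1(\rho(o,x)\vee 1)^2 \le \mathrm{Sec}_x(X_1,X_2) \le -c_2(\rho(o,x)\vee 1)^{-2}
\]
admits a non-constant bounded harmonic function. Once this is known, Theorem \ref{t4-2} immediately yields that $(\E_{\R^+}^o,\D(\E_{\R^+}^o))$ is reducible, and the non-ergodicity of $\mu_{\R^+}^o$ for the associated Markov semigroup follows from \cite[Proposition 2.1.6]{CF12}.

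The core geometric input I would invoke is the solvability of the Dirichlet problem at infinity on such manifolds. Since $M$ is Cartan--Hadamard, the exponential map at $o$ is a diffeomorphism $T_oM\to M$, so one has the canonical geometric compactification $\overline M=M\cup S_\infty(M)$ by the sphere at infinity. The upper bound $\mathrm{Sec}\le -c_2(\rho(o,x)\vee 1)^{-2}$ provides sufficient negative curvature to guarantee that Brownian motion on $M$ is transient and almost surely exits to a random point of $S_\infty(M)$, while the lower bound $\mathrm{Sec}\ge -c_1(\rho(o,x)\vee 1)^2$ controls the growth of curvature enough to solve the Dirichlet problem at infinity with continuous boundary data. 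This is precisely the regime treated by Hsu--March and by Hsu (see also the classical results of Anderson, Sullivan, Choi, and Ancona), and under these hypotheses any $f\in C(S_\infty(M))$ admits a bounded harmonic extension to $M$; choosing any non-constant $f$ produces a non-constant bounded harmonic function $u$ on $M$, so $M$ is not a Liouville manifold.

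With $u$ in hand, the remaining step is purely formal: Theorem \ref{t4-2} asserts that any Riemannian manifold that admits a non-constant bounded harmonic function gives rise to a reducible Dirichlet form $(\E_{\R^+}^o,\D(\E_{\R^+}^o))$, and that then $\mu_{\R^+}^o$ is not ergodic for the associated Markov process on ${\bf E}_{\R^+}^o(M)$. Invoking this theorem with the harmonic function provided above closes the argument.

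The main obstacle is not analytic but citational: one must appeal to the appropriate statement on the solvability of the Dirichlet problem at infinity under the precise two-sided curvature pinching stated here. No additional integration-by-parts or closability work is needed, because all the hard work was already carried out in Theorem \ref{t4-2}; the curvature assumptions enter the argument only through their implication that $M$ fails to be Liouville.
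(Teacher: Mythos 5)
Your proposal matches the paper's argument exactly: the corollary is obtained by noting that the stated curvature pinching makes the Cartan--Hadamard manifold $M$ non-Liouville and then invoking Theorem \ref{t4-2}. The paper simply asserts the non-Liouville property without justification, so your additional sketch via the solvability of the Dirichlet problem at infinity (Hsu--March, Anderson, Sullivan, Choi, Ancona) supplies the standard reference that the paper leaves implicit; nothing further is needed.
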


\subsection{The whole line}\label{sect4}
In this section, we will study the functional inequality and ergodic property for the Dirichlet form
$(\E_{\R}^\nu,\D(\E_{\R}^\nu))$ constructed in Section \ref{sect3}, where $\nu(\dd x)=\nu(x) \dd x$ is a probability measure on
$M$ which is absolutely continuous with respect to volume (Lebesgue) measure on $M$. The case for $(\E_{\R}^o,\D(\E_{\R}^o))$
is similar and we omit the details here.

As in Section \ref{sect3}, for  $\gamma \in {\bf E}_{\R}(M)$,
we could decompose $\gamma=(\tilde \gamma, \bar \gamma)$ with
$$ \tilde \gamma(s):=\gamma(s),\ \bar \gamma(s):=\gamma(-s),\ s\ge 0$$. We also set
\begin{align*}
M_s(\gamma):=
\begin{cases}
& \hat M_s(\tilde \gamma),\ \ \ \ s\ge 0,\\
& \hat M_{-s}(\bar \gamma),\ \ \ s<0.
\end{cases}
\end{align*}
Here $\hat M_t(\gamma)$ denotes the solution to \eqref{eq3.4} with $\gamma \in {\bf E}_{\R^+}(M)$.
\begin{lem}\label{l4-2}
Suppose $M$ is compact, for every $F\in \F C_b$ with the form \eqref{eq3.01} we have
\begin{equation}\label{l4-2-1}
\begin{split}
\nabla_x \mu^x_{\R}(F)&=\sum_{j=1}^m\mu^x_{\R}\Big[\int_0^{T_j}\hat \partial_j f(\gamma)M_s(\gamma)U_s(\gamma)^{-1}\nabla g_j
\big(s,\gamma(s)\big)\dd s\Big]\\
&+\sum_{j=1}^k\mu^x_{\R}\Big[\int_{-\bar T_j}^0\hat \partial_{m+j} f(\gamma)M_s(\gamma)U_s(\gamma)^{-1}\nabla \bar g_j
\big(s,\gamma(s)\big)\dd s\Big],
\end{split}
\end{equation}
where $\hat \partial_j f(\gamma)$ denotes the same item as that in \eqref{eq3.03} and $U_s(\gamma)$ is defined in \eqref{eq3.01-1}.

\end{lem}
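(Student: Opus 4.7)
The plan is to reduce the two-sided formula to a one-sided Bismut-type derivative formula for $\mu^x_{\R^+}(F)$. By the independence under $\mu^x_\R$ of the positive-time piece $\tilde\gamma$ and the negative-time piece $\bar\gamma$, each a Brownian motion on $M$ starting at $x$ (see \eqref{eq3.02}), one may write $F(\gamma)=\tilde F(\tilde\gamma,\bar\gamma)$ with $\tilde F$ built from the two families of path-integrals in \eqref{eq3.01}. Differentiating the iterated expectation $(\mu^x_{\R^+}\otimes\mu^x_{\R^+})(\tilde F)$ at $x$, the product rule produces one contribution from perturbing the starting point of $\tilde\gamma$ and one from perturbing that of $\bar\gamma$; up to the relabelling $j\mapsto m+j$ in the second family, these are exactly the two sums in \eqref{l4-2-1}. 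Hence it suffices to establish, for $F\in\F C_b^1$ of the form \eqref{eq 3.1}, the half-line identity
\begin{equation*}
\nabla_x\mu^x_{\R^+}(F)=\mu^x_{\R^+}\Big[\sum_{j=1}^m\hat\partial_j f(\gamma)\int_0^{T_j}M_s(\gamma)U_s(\gamma)^{-1}\nabla g_j(s,\gamma(s))\,\dd s\Big].
\end{equation*}

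To prove this one-sided identity I would invoke the derivative of the stochastic flow of $M$-valued Brownian motion. Let $X^x_s$ denote the Brownian motion on $M$ starting at $x$, and write $J_s(v):=(\dd X^x_s)(v)\in T_{X^x_s}M$ for its derivative in direction $v\in T_xM$; on the compact $M$ this flow derivative is well-defined and has moments of every order bounded uniformly on $[0,T]$, $T:=\max_{1\le j\le m}T_j$, thanks to the boundedness of $\Ric$. The classical derivation based on the Jacobi/Weitzenb\"ock equation satisfied by $U_s^{-1}J_s(v)$ along Brownian motion, followed by averaging against the path of $X^x$, yields
\begin{equation*}
\mathbb{E}\bigl[U_s(\gamma)^{-1}J_s(v)\,\big|\,\F^{X^x}_s\bigr]=M_s(\gamma)\,U_0^{-1}v,
\end{equation*}
with $M_s$ the solution of the Ricci ODE \eqref{eq3.4} along $U_s$.

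Plugging this into the chain rule for $F$ completes the argument. Indeed, for $v\in T_xM$,
\begin{equation*}
\langle\nabla_x\mu^x_{\R^+}(F),v\rangle=\mathbb{E}\Big[\sum_{j=1}^m\hat\partial_j f(X^x_\cdot)\int_0^{T_j}\langle\nabla g_j(s,X^x_s),J_s(v)\rangle_{T_{X^x_s}M}\,\dd s\Big],
\end{equation*}
and rewriting the Riemannian inner product through the isometry $U_s:\R^n\to T_{X^x_s}M$, applying Fubini, and invoking the conditional-expectation identity above produces the right-hand side of \eqref{l4-2-1} after the natural identification $T_xM\simeq\R^n$ through $u_x=U_0$. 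The principal technical obstacle is justifying the interchange of $\nabla_x$ with both the $\mu^x_{\R^+}$-expectation and the $\dd s$-integral, which rests on the uniform $L^2$-control of $|J_s(v)|$ on $[0,T]$ noted above; a secondary bookkeeping point is matching the orientation of the Ricci ODE \eqref{eq3.4} (where $\Ric$ multiplies $M_t$ on the right) with the orientation produced by the conditional expectation in the flow-derivative formula, which, since $\Ric$ is symmetric, amounts to a transposition at the appropriate step and introduces no essential difficulty.
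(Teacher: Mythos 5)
Your proof is correct in outline, but it takes a genuinely different route from the paper's. The reduction of the two-sided formula to a half-line derivative formula via the independence of $\tilde\gamma$ and $\bar\gamma$ under $\mu^x_{\R}$ is exactly what the paper does implicitly. The difference lies in how the half-line identity is obtained. The paper discretizes in time, writing $F_k(\gamma)=f\big(\sum_{i=1}^k\frac{T}{k}g(t_i,\gamma(t_i))\big)$, applies the known finite-dimensional derivative formula of Fang--Wang \cite[Lemma 3.3]{FW05} for cylinder functions of finitely many time points, and then passes to the limit $k\to\infty$ in $L^2(M,\dd x)$, identifying the limit as a weak gradient by testing against smooth vector fields $V$ and integrating by parts against ${\rm div}\,V$. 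You instead work directly with the continuous-time functional: you differentiate the stochastic flow, introduce $J_s(v)=(\dd X^x_s)(v)$, and invoke the Elworthy--Yor filtering identity expressing the conditional expectation of $U_s^{-1}J_s(v)$ given the path of $X^x$ as the damped parallel transport $M_sU_0^{-1}v$ (up to the transposition you correctly flag, harmless since $\Ric$ is symmetric). In effect you are re-proving a continuous-time analogue of \cite[Lemma 3.3]{FW05} from first principles rather than reducing to it. What the paper's route buys is that all the stochastic-analysis content is offloaded to a citation and only an elementary $L^2$ approximation remains; what your route buys is a self-contained and more conceptual derivation that avoids the discretization entirely. The two technical points you must supply --- the interchange of $\nabla_x$ with the expectation and the $\dd s$-integral (justified by the uniform moment bounds on $|J_s(v)|$ on compact $M$), and the validity of the filtering identity when conditioning on the whole path rather than on $\F^{X}_s$ alone --- are both standard on a compact manifold and are correctly identified in your write-up, so I see no genuine gap, only steps that would need to be written out with the usual care.
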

\begin{proof}
For simplicity, we only prove  \eqref{l4-2-1} for $F=f\Big(\int_0^T g(s,\gamma(s))\dd s\Big)$ for some
$f\in C_b^1(\R)$ and $g\in C_b^{0,1}([0,\infty)\times M)$. Other cases could be tackled similarly (by decomposing
into $\gamma=(\tilde \gamma, \bar \gamma)$).

For each $k\in \mathbb{N}^+$, let $F_k(\gamma):=f\Big(\sum_{i=1}^k \frac{T}{k}g\big(t_i,\gamma(t_i)\big)\Big)$ with
$t_i=\frac{iT}{k}$, $1\le i \le k$. Then  applying \cite[Lemma 3.3]{FW05} we  obtain that
\begin{equation*}
\begin{split}
\nabla_x \mu^x_{\R}(F_k)=\mu^x_{\R}\Big[\sum_{i=1}^k \frac{T}{k}\hat \partial f_k(\gamma)M_{t_i}U_{t_i}(\gamma)^{-1}
\nabla g\big(t_i,\gamma(t_i)\big)\Big],
\end{split}
\end{equation*}
where  $\hat \partial f_k(\gamma)=f'\Big(
\sum_{i=1}^k \frac{T}{k}g\big(t_i,\gamma(t_i)\big)\Big)$.

Based on such expression it is easy to verify that
\begin{align*}
& \lim_{k \rightarrow \infty}\int_M \Big|\nabla_x \mu^x_{\R}(F_k)-
\mu^x_{\R}\Big[\int_0^T\hat \partial f(\gamma)M_{s}U_{s}(\gamma)^{-1}
\nabla g\big(s,\gamma(s)\big)\dd s\Big]\Big|^2 \dd x=0,\\
& \lim_{k \rightarrow \infty}\int_M \big|\mu^x_{\R}(F_k)-\mu^x_{\R}(F)\big|^2
\dd x=0,
\end{align*}
where $\hat \partial f(\gamma):=f'\Big(\int_0^T g\big(s,\gamma(s)\big)\dd s\Big)$. According to this we could prove
for every smooth vector fields $V \in C^{\infty}(TM)$,
\begin{equation*}
\int_M \Big\langle \mu^x_{\R}\Big[\int_0^T\hat \partial f(\gamma)M_{s}U_{s}(\gamma)^{-1}
\nabla g\big(s,\gamma(s)\big)\dd s\Big] ,V(x)\Big\rangle_{T_x M}\dd x=-\int_M \mu^x_{\R}(F){\rm div}V(x)\dd x,
\end{equation*}
which means
$$\nabla_x  \mu^x_{\R}(F)= \mu^x_{\R}\Big[\int_0^T\hat \partial f(\gamma)M_{s}U_{s}(\gamma)^{-1}
\nabla g\big(s,\gamma(s)\big)\dd s\Big].$$
Thus \eqref{l4-2-1} holds for $F=f\Big(\int_0^T g(s,\gamma(s))\dd s\Big)$ and we have finished the proof.
\end{proof}



 \beg{thm}\label{t4-3}[Log-Sobolev inequality and Poincar\'e inequality]
\begin{itemize}
 \item [(1)]  Suppose that $\Ric\geq K$ for $K>0$ and the following log-Sobolev inequality holds for $\nu$ (on $M$) 
 \begin{equation}\label{t4-3-0}
\nu\big(f^2\log f^2\big)-\nu(f^2)\log \nu(f^2)\le C_1\int_M |\nabla f(x)|^2\nu(\dd x),\ \ \forall\ f\in C^1(M).
\end{equation}
Then the log-Sobolev inequality holds
\begin{equation}\label{t4-3-1}
\mu_{\R}^\nu(F^2\log F^2)\le \Big(\frac{8}{K^2}+\frac{2C_1}{K}\Big) \E_{\R}^\nu(F,F),\ \ \ \ F \in \F C_{c}.
\ \mu_{\R}^\nu(F^2)=1.\end{equation}

\item[(2)] Suppose $M$ is compact and the following Poincar\'e inequality holds
\begin{equation}\label{t4-3-0a}
\nu(f^2)-\nu(f)^2\le C_2\int_M |\nabla f(x)|^2\nu(\dd x),\ \ \forall\ f\in C^1(M),
\end{equation}
and there exists  $\varepsilon \in (0,1)$ such that
\begin{equation}\label{t4-3-2}
\delta_\varepsilon:=\sup_{T\in [0,\infty)}\delta_{\varepsilon}(T)<\infty,
\end{equation}
and
$$ C_0:=\int_0^{\infty}\eta(s)\dd s<\infty,$$
where $\delta_{\varepsilon}(T)$, $\eta(s)$ are defined by \eqref{t3-1a}.
Then the following Poincar\'e inequality holds,
\begin{equation}\label{t4-3-3a}
\mu_{\R}^\nu(F^2)-\mu_{\R}^\nu(F)^2 \le \big(\delta_\varepsilon+C_0C_2\big) \E_{\R}^\nu(F,F),\ \ \ \ F \in \F C_{c}.
\end{equation}
\end{itemize}
\end{thm}

\begin{Remark}
As explained by \cite[Chapter 5]{W05}, if $M$ is compact and $\nu(\dd x)=\nu(x)\dd x$ is a probability measure
such that $\inf_{x\in M}\nu(x)>0$, then the log-Sobolev inequality \eqref{t4-3-0} and Poincar\'e inequality
\eqref{t4-3-0a} hold. In particular, \eqref{t4-3-0} and \eqref{t4-3-0a} hold for the normalized volume measure when
$M$ is compact.
\end{Remark}


\ \newline\emph{\bf Proof of Theorem \ref{t4-3}.}
{\bf Step (1)}
Let $$G(\bar \gamma):=\sqrt{
\int_{{\bf E}_{\R^+}^x(M)}F^2(\tilde \gamma,\bar \gamma)\mu_{\R^+}^x (\dd \tilde \gamma)},$$
$$g(x):=\sqrt{\int_{{\bf E}_{\R^+}^x(M)}\int_{{\bf E}_{\R^+}^x(M)}F^2(\tilde \gamma, \bar \gamma)
\mu_{\R^+}^x (\dd \tilde \gamma)\mu_{\R^+}^x (\dd \bar \gamma)}=
\sqrt{\int_{{\bf E}_{\R}(M)}F^2(\gamma)\mu_{\R}^x(\dd \gamma)}.$$
Then we have for every $F \in \F C_c$ with form \eqref{eq3.01},
\begin{equation}\label{t4-3-3}
\begin{split}
& \quad \int_{{\bf E}_{\R}(M)} F^2(\gamma)\log F^2(\gamma) \mu_{\R}^\nu(\dd \gamma)\\
&=\int_M\int_{{\bf E}_{\R^+}^x(M)}\int_{{\bf E}_{\R^+}^x(M)}F^2(\tilde \gamma,\bar \gamma)
\log F^2(\tilde \gamma,\bar \gamma) \mu_{\R^+}^x(\dd \tilde \gamma)
 \mu_{\R^+}^x(\dd \bar \gamma)\nu(\dd x)\\
&\le 2C(K)\int_M\int_{{\bf E}_{\R^+}^x(M)}\int_{{\bf E}_{\R^+}^x(M)}
|\tilde DF(\tilde \gamma, \bar \gamma)|_{\HH_+}^2\mu_{\R^+}^x(\dd \tilde \gamma)
 \mu_{\R^+}^x(\dd \bar \gamma)\nu(\dd x)\\
 &\quad +\int_M\int_{{\bf E}_{\R^+}^x(M)}G^2(\bar \gamma)\log G^2(\bar \gamma)\mu_{\R^+}^x(\dd \bar \gamma)\nu(\dd x)\\
 &\le 2C(K)\int_M\int_{{\bf E}_{\R^+}^x(M)}\int_{{\bf E}_{\R^+}^x(M)}
|\tilde DF(\tilde \gamma, \bar \gamma)|_{\HH_+}^2\mu_{\R^+}^x(\dd \tilde \gamma)
 \mu_{\R^+}^x(\dd \bar \gamma)\nu(\dd x)\\
 &\quad + 2C(K)\int_M\int_{{\bf E}_{\R^+}^x(M)}|\bar D G(\bar \gamma)|_{\HH_+}^2
  \mu_{\R^+}^x(\dd \bar \gamma)\nu(\dd x)+\int_M g^2(x)\log g^2(x) \nu(\dd x)\\
  &\le 2C(K)\int_M\int_{{\bf E}_{\R^+}^x(M)}\int_{{\bf E}_{\R^+}^x(M)}
|\tilde DF(\tilde \gamma, \bar \gamma)|_{\HH_+}^2\mu_{\R^+}^x(\dd \tilde \gamma)
 \mu_{\R^+}^x(\dd \bar \gamma)\nu(\dd x)\\
 &\quad + 2C(K)\int_M\int_{{\bf E}_{\R^+}^x(M)}|\bar D G(\bar \gamma)|_{\HH_+}^2
  \mu_{\R^+}^x(\dd \bar \gamma)\nu(\dd x)+C_1\int_M |\nabla g(x)|^2 \nu(\dd x)+
  \mu_{\R}^\nu(F^2)\log \mu_{\R}^\nu(F^2).
\end{split}
\end{equation}
 Here in the second step we  applied \eqref{eq3.2} to $F(\cdot,\bar \gamma)$ (with $\bar \gamma$ fixed) with
 $ \tilde DF(\tilde \gamma, \bar \gamma)$ denoting the $L^2$ gradient with respect to the variable $\tilde \gamma\in
 {\bf E}_{\R^+}(M)$; in the third step we  applied \eqref{eq3.2} to $G(\bar \gamma)$ with $\bar DG(\bar \gamma)$ denoting
 $L^2$ gradient with respect to the variable $\bar \gamma\in
 {\bf E}_{\R^+}(M)$ and the property $\int_{{\bf E}_{\R^+}^o(M)}G^2(\bar \gamma)\mu_{\R^+}^x(\dd \bar \gamma)=
 g^2(x)$ ; in the last step we  applied \eqref{t4-3-0} to $g(x)$ and the property
 $\int_M g^2(x)\nu(\dd x)=\mu_{\R}^\nu(F^2)$. At the same time, it holds
 \begin{align*}
& \big|\tilde D F(\tilde \gamma, \bar \gamma)\big|_{\HH_+}^2+
 \big|\bar D F(\tilde \gamma, \bar \gamma)\big|_{\HH_+}^2=\big|DF( \gamma)\big|_{\HH}^2,\\
&\big|\bar D G(\bar \gamma)\big|_{\HH_+}^2=\frac{\Big|\int_{{\bf E}_{\R^+}^x(M)}
F(\tilde \gamma, \bar \gamma)\bar DF(\tilde \gamma, \bar \gamma) \mu_{\R^+}^x(\dd \tilde \gamma)\Big|_{\HH_+}^2}
{\int_{{\bf E}_{\R^+}^x(M)}F^2(\tilde \gamma, \bar \gamma)\mu_{\R^+}^x(\dd \tilde \gamma)}
\le \int_{{\bf E}_{\R^+}^x(M)}
\big|\bar DF(\tilde \gamma, \bar \gamma)\big|_{\HH_+}^2 \mu_{\R^+}^x(\dd \tilde \gamma).
 \end{align*}
 Meanwhile by \eqref{l4-2-1},
 \begin{align*}
 & |\nabla g(x)|^2=\frac{\Big|\mu_{\R}^x\Big[F(\gamma)J(\gamma)\Big]\Big|^2}
{\int_{{\bf E}_{\R}^x(M)}F^2(\gamma)\mu_{\R}^x(\dd \gamma)}\le \mu_{\R}^x[|J(\gamma)|^2],
 \end{align*}
 where
 \begin{equation}\label{t4-3-4}
 \begin{split}
J(\gamma)&=\sum_{j=1}^m \int_0^{T_j}\hat \partial_j f(\gamma)M_s(\gamma)U_s(\gamma)^{-1}\nabla g_j
\big(s,\gamma(s)\big)\dd s
\\&+\sum_{j=1}^k\int_{-\bar T_j}^0\hat \partial_{m+j} f(\gamma)M_s(\gamma)U_s(\gamma)^{-1}\nabla \bar g_j
\big(s,\gamma(s)\big)\dd s\\
&=\int_{-T}^T M_s(\gamma) DF(\gamma)(s)\dd s
 \end{split}
 \end{equation}
 with $T:=\max\{\max_{1\le j \le m}T_j,\max_{1\le j \le k}\bar T_j\}$. Based on the expression of $J(\gamma)$ above we arrive at
 \begin{align*}
|\nabla g(x)|^2&\le \mu_{\R}^x \Big[\Big(\int_{-T}^T \|M_s(\gamma)\|^2 \dd s\Big)\cdot
\Big(\int_{-T}^T |DF(\gamma)|^2(s)\dd s\Big)\Big]\\
&\le 2\Big(\int_0^\infty e^{-Ks}\dd s\Big)\mu_{\R}^x\Big[|DF(\gamma)|^2_{\HH}\Big],
 \end{align*}
 where the last step follow from the estimates $\|M_s(\gamma)\|\le e^{-\frac{K|s|}{2}}$ for all $s\in \R$.

Finally, combining all the estimates above into \eqref{t4-3-3} yields \eqref{t4-3-1}.

{\bf Step (2)} Similar as \eqref{t4-3-3} (and apply \eqref{t3-2}) we obtain
\begin{equation}\label{t4-3-5}
\begin{split}
& \quad \int_{{\bf E}_{\R}(M)} F^2(\gamma)\mu_{\R}^\nu(\dd \gamma)-
\Big(\int_{{\bf E}_{\R}(M)} F(\gamma)\mu_{\R}^\nu(\dd \gamma)\Big)^2\\
&\le \delta_\varepsilon \int_M\int_{{\bf E}_{\R^+}^x(M)}\int_{{\bf E}_{\R^+}^x(M)}
|\tilde DF(\tilde \gamma, \bar \gamma)|_{\HH_+}^2\mu_{\R^+}^x(\dd \tilde \gamma)
 \mu_{\R^+}^x(\dd \bar \gamma)\nu(\dd x)\\
 &+\delta_\varepsilon \int_M\int_{{\bf E}_{\R^+}^x(M)}|\bar D Q(\bar \gamma)|_{\HH_+}^2
\mu_{\R^+}^x(\dd \bar \gamma)\nu(\dd x)+C_2\int_M |\nabla q(x)|^2 \nu(\dd x),
\end{split}
\end{equation}
where
$$Q(\bar \gamma):=\int_{{\bf E}_{\R^+}^x(M)}F(\tilde \gamma,\bar \gamma)\mu_{\R^+}^x(\dd \tilde \gamma),\ \
q(x):=\int_{{\bf E}_{\R}^x(M)}F(\gamma)\mu_{\R}^x(\dd \gamma).$$
Still by the same arguments in {\bf Step (1)} we could show
\begin{align*}
& |\bar D Q(\bar \gamma)|_{\HH_+}^2=\Big|\int_{{\bf E}_{\R^+}^x(M)} \bar
D F(\tilde \gamma,\bar \gamma)\mu_{\R^+}^x(\dd \tilde \gamma)\Big|_{\HH_+}^2
\le \int_{{\bf E}_{\R^+}^x(M)} \big|\bar
D F(\tilde \gamma,\bar \gamma)\big|_{\HH_+}^2\mu_{\R^+}^x(\dd \tilde \gamma),\\
& |\nabla q(x)|^2\le \big|\mu_{\R}^x\big[J(\gamma)\big]\big|^2
\le \mu_{\R}^x\Big[\int_{-T}^T \|M_s\|^2 \dd s\Big]\cdot
\mu_{\R}^x\Big[\int_{-T}^T |DF(s)|^2 \dd s\Big]\\
&\le 2\Big(\int_0^{\infty}\eta(s)\dd s\Big)\cdot \mu^x_{\R}\Big[\big|DF(\gamma)\big|_{\HH}^2\Big],
\end{align*}
where $J(\gamma)$ is defined by \eqref{t4-3-4} and the last step above is due to
\begin{align*}
&\quad \mu_{\R}^x\Big[\int_{-T}^T \|M_s\|^2 \dd s\Big]=2\mu_{\R^+}^x\Big[\int_0^T \|M_s\|^2 \dd s\Big]\\
&\le 2\mu_{\R^+}^x\Big[\int_0^T \exp(-\int_0^s K(\gamma(r))\dd r) \dd s\Big]\le
2\int_0^\infty \eta(s)\dd s.
\end{align*}
Then combining all the above estimates into \eqref{t4-3-5} yields \eqref{t4-3-3a}.
$\hfill\square$
\vskip.10in

When $M=\R^n$, the Markov process constructed in Section 3 corresponds to the solutions to the stochastic heat equations.  The most interesting case is that $\nu$ is given by Lebesgue measure, which is related the the stochastic heat equations without any boundary condition. In this case the reference measure has  infinite mass. So we do not investigate the long time behavior here.

Following the same procedure in Theorem \ref{t4-2} we can still get the following result. So we omit the proof here.
\begin{thm}\label{t4-5}
If $M$ is not a Liouville manifold and $\nu$ is a probability measure,
then $(\E_{\R}^\nu,\D(\E_{\R}^\nu))$ is reducible. Hence $\mu_{\R}^\nu$ is not
ergodic for the Markov process associated with $(\E_{\R}^\nu,\D(\E_{\R}^\nu))$.
\end{thm}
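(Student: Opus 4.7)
The plan is to mimic the proof of Theorem~\ref{t4-2} in the present setting. Since $M$ is not Liouville, fix a non-constant bounded harmonic function $u\colon M\to\R$. For $T>0$ set
$$F_T(\gamma):=\frac{1}{T}\int_0^T u(\gamma(t))\,\dd t,\qquad \gamma\in {\bf E}_{\R}(M),$$
so that $F_T$ depends only on the forward half $\tilde\gamma$ of $\gamma=(\tilde\gamma,\bar\gamma)$. Since $u$ is bounded and harmonic, It\^o's formula on the forward strand yields, under any $\mu^x_\R$ and hence under $\mu_\R^\nu=\int_M\mu_\R^x\,\nu(\dd x)$,
$$N_t:=u(\gamma(t))-u(\gamma(0))=\int_0^t\langle\nabla u(\tilde\gamma(s)),U_s(\tilde\gamma)\,\dd\tilde\beta_s\rangle,$$
which is a bounded $L^2$-martingale (bounded by $2\|u\|_\infty$). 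By the martingale convergence theorem there exists $N_\infty\in L^2(\mu^\nu_\R)$ with $N_t\to N_\infty$ in $L^2(\mu^\nu_\R)$, and a routine Ces\`aro argument gives $F_T\to u_\infty:=N_\infty+u(\gamma(0))$ in $L^2(\mu^\nu_\R)$.

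Next I would show $F_T\in\D(\E^\nu_\R)$ together with the quantitative bound $\E^\nu_\R(F_T,F_T)\to 0$. As in the proof of Theorem~\ref{t4-2}, introduce the cutoffs
$$F_T^R(\gamma):=\frac{1}{T}\int_0^T\phi_R\big(\rho(o,\gamma(s))\big)\,u(\gamma(s))\,\dd s,$$
which lie in $\F C_c\subset\D(\E^\nu_\R)$ by Lemma~\ref{l2.8}; the bound
$$\E^\nu_\R(F_T,F_T)\le\frac{1}{T^2}\int_0^T\mu^\nu_\R\big(|\nabla u(\gamma(s))|^2\big)\,\dd s=\frac{1}{T^2}\mu^\nu_\R\big(|u(\gamma(T))-u(\gamma(0))|^2\big)\le\frac{4\|u\|_\infty^2}{T^2}$$
follows from It\^o's formula applied to $u$ (since $u$ is harmonic the drift vanishes), and combined with $F_T^R\to F_T$ in $L^2(\mu^\nu_\R)$ and $\sup_R\E^\nu_\R(F_T^R,F_T^R)<\infty$, Lemma I-2.12 of \cite{MR92} places $F_T$ in $\D(\E^\nu_\R)$. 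By closability of $(\E^\nu_\R,\D(\E^\nu_\R))$ (Theorem~\ref{t 4-1}, whose hypothesis \eqref{l4-1-1} is trivial for finite $\nu$), passing to the limit $T\to\infty$ gives $u_\infty\in\D(\E^\nu_\R)$ with $\E^\nu_\R(u_\infty,u_\infty)=0$.

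The main obstacle is verifying that $u_\infty$ is genuinely non-constant as an element of $L^2(\mu^\nu_\R)$: only then does $\E^\nu_\R(u_\infty,u_\infty)=0$ witness reducibility. I would compute the variance of $u_\infty$ via the $L^2$-convergence $u(\gamma(t))\to u_\infty$ and the conditional decomposition
$$\Var_{\mu^\nu_\R}\big(u(\gamma(t))\big)=\Var_\nu\big(P_t u\big)+\int_M\Var_{\mu^x_\R}\big(u(\gamma(t))\big)\,\nu(\dd x)=\Var_\nu(u)+\int_M\int_0^t\mu^x_\R\big(|\nabla u(\gamma(s))|^2\big)\,\dd s\,\nu(\dd x),$$
where the first equality uses $P_tu=u$ (harmonicity) and the second uses It\^o's isometry. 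Since $u$ is non-constant and $M$ is connected, $|\nabla u|\not\equiv 0$; as Brownian motion from any $x$ hits the open set $\{|\nabla u|>0\}$ with positive probability on $[0,t]$ for $t$ large, the second summand is strictly positive for all $t$ large enough, so $\Var_{\mu^\nu_\R}(u_\infty)>0$ and $u_\infty$ is non-constant.

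Thus $(\E^\nu_\R,\D(\E^\nu_\R))$ is reducible, and by \cite[Proposition 2.1.6]{CF12} the associated Markov process cannot be ergodic; in particular there is a non-constant $F\in\D(\E^\nu_\R)$ (namely $u_\infty$) with $P_tF=F$ $\mu^\nu_\R$-a.s. for every $t\ge 0$, which completes the proof.
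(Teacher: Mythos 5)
Your proof is correct and follows exactly the route the paper intends: the paper omits the proof of Theorem \ref{t4-5} with the remark that it follows the procedure of Theorem \ref{t4-2}, and your argument is precisely that procedure carried out on the forward strand under the mixture $\mu^\nu_\R$, with the law-of-total-variance computation supplying the non-constancy of $u_\infty$ that the paper only asserts. One small slip: $F_T^R$ does not lie in $\F C_c$ as you claim (for $T>1$ it need not vanish when $\int_0^1\rho(o,\gamma(s))\,\dd s$ is large, since the path may return near $o$ on $[1,T]$), but since $\nu$ is a probability measure $F_T^R$ still belongs to $\D(\E^\nu_\R)$ by the approximation argument of Lemma \ref{l2.8}, so nothing is lost.
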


\section*{Acknowledgments}
We would like to thank Professor Tadahisa Funaki for helpful suggestions.

\beg{thebibliography}{99}

\leftskip=-2mm
\parskip=-1mm

\bibitem{A96} S. Aida, \emph{Logarithmic Sobolev inequalities on loop spaces over compact Riemannian
manifolds}, in ``Proc. Fifth Gregynog Symp., Stoch. Anal. Appl. ''(I. M. Davies, A. Truman,and K. D. Elworthy, Eds.), pp. 1-15, World Scientific, 1996

\bibitem{A961} S. Aida, \emph{Gradient estimates of harmonic functions and the asymptotics of spectral gaps on path spaces},
 Interdiscip. Inform. Sci. 2 (1996), 75--84.

\bibitem{AE95}S. Aida and D. Elworthy, \emph{Differential calculus on Path and Loop spaces 1: Logarithmic
Sobolev inequalities on Path spaces,} C.R. Acad. Sci. Paris Ser. I Math. 321 (1995),
97--102.

\bibitem{ALR93}S. Albeverio, R. L\'{e}andre and M. R\"{o}ckner, \emph{Construction of rotational invariant diffusion
on the free loop space,} C. R. Acad. Paris Ser. 1 316 (1993) 287--292

\bibitem{AR91}S. Albeverio, M. R\"{o}ckner, \emph{Stochastic differential equations in infinite
dimensions: Solutions via Dirichlet forms}, Probab. Theory Related Field 89 (1991) 347-386.

\bibitem{AD99}  L. Andersson,  B. K. Driver, \emph{Finite-dimensional approximations to
Wiener measure and path integral formulas on manifolds}, J. Funct. Anal, 165, no. 2, (1999), 430¨C498.

\bibitem{ATW06} M. Arnaudon, A. Thalmaier, F.-Y. Wang, \emph{
Harnack inequality and heat kernel estimates on manifolds with curvature unbounded below},
Bull. Sci. Math. 130 (2006), 223--233.

\bibitem{BL}D. Barden,
H. LE, \emph{Some consequences of the nature of the distance function on cut locus in a Riemannian manifold,}
J.London Math.Soc. (2)56 (1997) 369¡À383

\bibitem{BGHZ19} Y. Bruned, F. Gabriel, M. Hairer, L. Zambotti, \emph{Geometric stochastic heat equations,}
arXiv: 1902.02884.

\bibitem{BHZ16}Y. Bruned, M. Hairer, and L. Zambotti, \emph{Algebraic renormalisation of regularity structures.
arXiv:1610.08468,} pages 1-84, 2016.

\bibitem{CF12}Z.-Q. Chen and M. Fukushima, Symmetric Markov processes, time change, and
boundary theory, vol. 35 of London Mathematical Society Monographs Series, Princeton
University Press, Princeton, NJ, 2012

\bibitem{CHL97}B. Capitaine, E. P. Hsu and M. Ledoux, \emph{Martingale representation and a sim-
ple proof of logarithmic Sobolev inequalities on path spaces,}  Elect. Comm. Probab.
2(1997), 71-81.

\bibitem{CH16}A. Chandra and M. Hairer. \emph{An analytic BPHZ theorem for regularity structures}
arXiv:1612.08138, pages 1-113, 2016.

\bibitem{Cha} I. Chavel, \emph{Riemannian geometry. A modern introduction.}
Cambridge Studies in Advanced Mathematics, 98. Cambridge University Press, Cambridge, 2006.

\bibitem{CLW11}X. Chen, X.-M. Li and B. Wu, \emph{A concrete estimate for the weak Poincar¡äe inequality on
loop space,} Probab.Theory Relat. Fields 151 (2011), no.3-4, 559--590.

\bibitem{CLW17}X. Chen, X.-M. Li and B. Wu, \emph{Small time gradient and Hessian estimates for logarithmic
heat kernel on a general complete manifold,} Preprint.

\bibitem{CLW18} X. Chen, X.-M. Li and B. Wu, \emph{Stochastic analysis on loop space over general Riemannian
manifold,} Preprint.

\bibitem{CLWFL}X. Chen, X.-M. Li and B. Wu, \emph{Analysis on Free Riemannian Loop Space,} Preprint.

\bibitem{CW14} X. Chen and B. Wu,  \emph{Functional inequality on path space over a non-compact Riemannian manifold,} J. Funct. Anal. 266 (2014), no. 12, 6753--6779

\bibitem{CZ} K.L.   Chung, Z. Zhao, \emph{From Brownian motion to Schr\"{o}dinger¡¯s equation,} A series of
Comprehensive Studies in Mathematics 312, Springer, 1985

\bibitem{Dav} E. B. Davies, \emph{Heat kernel bounds, conservation of probability and the feller property,} Journal d'Analyse Math\'{e}matique, 58, Issue 1,  (1992)  99--119

\bibitem{DR92}B. K. Driver and M. R\"{o}ckner,\emph{Construction of diffusions on path and loop spaces of
compact Riemannian manifolds,} C. R. Acad. Sci. Paris Ser. I 315 (1992) 603--608

\bibitem{Dri92}B. K. Driver, \emph{A Cameron-Martin quasi-invariance theorem for Brownian motion on
a compact Riemannian manifolds,} J. Funct. Anal. 110 (1992) 273--376.

\bibitem{Dri94} B. K. Driver, \emph{A Cameron-Martin type quasi-invariance theorem for
pinned Brownian motion on a compact Riemannian manifold} Trans. Amer.
Math. Soc. 342, no. 1, (1994), 375--395.

\bibitem{EL}  K. D. Elworthy, Xue-Mei Li.: A class of Integration by parts formulae in stochastic analysis I,
\emph{``It\^o's Stochastic Calculus and Probability Theory''} (dedicated to It\^o on the occasion of his eightieth birthday),
S. Watanabe, K. D. Elworthy  eds., (1996), Springer.

\bibitem{ELL97}  K. D. Elworthy, Y. Le Jan and X.-M. Li, \emph{
On the geometry of diffusion operators and stochastic flows,}
Lecture Notes in Mathematics, 1720. Springer-Verlag, Berlin, 1999.

\bibitem{ELR93}  K. D. Elworthy, X.-M. Li and S. Rosenberg, \emph{Curvature and topology: spectral positivity. Methods and applications of global analysis,} 45--60, 156, Novoe Global. Anal., Voronezh. Univ. Press, Voronezh, (1993).

\bibitem{FM} S. Fang P. Malliavin, \emph{
Stochastic analysis on the path space of a Riemannian manifold. I. Markovian stochastic calculus,}
J. Funct. Anal. 118 (1993), 249--274.

\bibitem{FW05} S. Fang and  F.-Y. Wang, \emph{Analysis on free Riemannian
path spaces,} Bull. Sci. Math. 129 (2005), 339--355.

\bibitem{FWW} S. Fang, F.-Y. Wang and B. Wu, \emph{Transportation-cost inequality on path spaces with uniform distance,}
Stochastic Process. Appl. 118 (2008), 2181--2197.

\bibitem{FW17} S. Fang and B. Wu, \emph{Remarks on spectral gaps on the Riemannian path space,}
Electron. Commun. Probab.
22(2017), no. 19, 1--13.

\bibitem{Fun88} T. Funaki, \emph{On diffusive motion of closed curves,} In Probability theory
and mathematical statistics (Kyoto, 1986), vol. 1299 of Lecture Notes
in Math., 86-94. Springer, Berlin, 1988.

\bibitem{Fun92} T. Funaki, \emph{A stochastic partial differential equation with values in a manifold},
J. Funct. Anal. 109, no. 2, (1992), 257--288.

\bibitem{FM} T. Funaki and H. Masato, \emph{A coupled KPZ equation, its two types of approximations and existence of global solutions},
J. Funct. Anal. 273, (2017), 1165--1204.

\bibitem{FQ} T. Funaki and J. Quastel, \emph{KPZ equation, its renormalization and invariant measures},
Stoch. Partial Differ. Equ. Anal. Comput. 3 (2015), 159--220.

\bibitem{FX} T. Funaki and B. Xie, \emph{A stochastic heat equation with the
distributions of L\'evy processes as its invariant measures}, Stochastic Process. Appl. 119 (2009), 307--326.

\bibitem{FOT94}M. Fukushima, Y. Oshima, and M. Takeda, \emph{Dirichlet Forms and Symmetric
Markov Processes,} de Gruyter, Berlin (1994)

\bibitem{FW05}S. Fang and F.-Y. Wang, \emph{Analysis on free Riemannian path spaces,} Bull. Sci. Math.
129 (2005) 339--355.


\bibitem{GWu} R. E. Greene and H.X. Wu,  \emph{Function Theory on Manifolds Which Possess a Pole,}
Lecture Notes in Math.  699, Springer-Verlag, (1979).

\bibitem{GW06} M. Gourcy,  L. Wu, \emph{Logarithmic Sobolev inequalities of diffusions for the L2-metric}. Potential
 Anal. 25 77--102 (2006)

\bibitem{Hai14}M. Hairer. \emph{A theory of regularity structures} Invent. Math., 198(2):269--504, 2014

\bibitem{Hai16}M. Hairer. \emph{The motion of a random string,} arXiv:1605.02192, pages 1--20, 2016.

\bibitem{H97}E.P. Hsu, \emph{Logarithmic Sobolev inequalities on path spaces over Riemannian manifolds,} Comm. Math.
Phys. 189 (1997) 9--16.

\bibitem{Hsu97} E. P. Hsu.: Integration by parts in loop spaces,
\emph{Math. Ann.} \textbf{309} (1997), 331--339.

\bibitem{Hsu2} E. P. Hsu.: Stochastic Analysis on Manifold, \emph{American Mathematical Society,} 2002.

\bibitem{IM85} A. Inoue, Y. Maeda, \emph{On integral transformations associated with a
certain Lagrangian as a prototype of quantization,} J. Math. Soc. Japan 37,
no. 2, (1985), 219--244.

\bibitem{L93}R. L\'eandre,  \emph{Integration by parts formulas and rotationally invariant Sobolev calculus
on free loop spaces,}  J. Geom. Phys. 11 (1993) 517¡V528.
\bibitem{L97}R. L\'eandre, \emph{Invariant Sobolev calculus on the free loop space,} Acta Appl. Math. 46
(1997) 267¡V350.
\bibitem{LN77} R. L\'eandre and J. Norris,  \emph{Integration by parts and Cameron¡VMartin formulas for
the free path space of a compact Riemannian manifold,} S?em. Probab. XXXI (1977)
16¡V23.

\bibitem{L04} J.-U. L\"{o}bus, \emph{A class of processes on the path space
over a compact Riemannian manifold with unbounded diffusion,} Tran.
Ame. Math. Soc. (2004), 1--17.

\bibitem{MR92}Z. M. Ma and M. R\"{o}ckner, \emph{Introduction to the Theory of (Non-Symmetric) Dirichlet
Forms ,}(Springer-Verlag, Berlin, Heidelberg, New York, 1992.

\bibitem{MR00}Z. M. Ma and M. R\"{o}ckner, \emph{Construction of diffusions on configuration spaces,} Osaka
J. Math. 37 (2000) 273--314.

\bibitem{N} A. Naber, Characterizations of bounded Ricci curvature on smooth and nonsmooth spaces, {\it arXiv: 1306.6512v4}.

\bibitem{Nor98} J. R. Norris,\emph{Ornstein-Uhlenbeck processes indexed by the circle}, Ann.
Probab. 26, no. 2, (1998), 465-478.

\bibitem{RWZZ17} M. R\"{o}ckner, B. Wu, R.C. Zhu and R.X. Zhu,\emph{Stochastic Heat Equations with Values in a
Manifold via Dirichlet Forms}, arxiv: 1711.09570.

\bibitem{S00} D.W. Stroock, \emph{An introduction to the analysis of paths on a Riemannian manifold,} Mathematical Surveys and Monographs, 74. American Mathematical Society, Providence, RI, 2000.

\bibitem{T97} A. Thalmaier, \emph{On the differentiation of heat semigroups and poisson integrals}, Stochastics
and Stochastic Reports 61 (1997) 297--321.

 \bibitem{TW98} A. Thalmaier and F.-Y. Wang, \emph{Gradient estimates for harmonic functions on regular domains
in Riemannian manifolds}, J. Funct. Anal. 155 (1998) 109--124.

\bibitem{W99} F.-Y. Wang, \emph{Spectral gap on path spaces with infinite time interval},
Sciences in China. 42, (1999), 600-604.

\bibitem{W04} F.- Y. Wang, \emph{Weak poincar\'{e} Inequalities on path
spaces,} Int. Math. Res. Not. (2004), 90--108.

\bibitem{W05} F.Y. Wang, \emph{Functional Inequalities, Markov Semigroup and Spectral Theory}. Chinese Sciences
Press, Beijing (2005)

\bibitem{WW08}F.-Y. Wang and B. Wu, \emph{Quasi-regular Dirichlet forms on path and loop spaces,} Forum
Math. 20 (2008) 1085-1096.

\bibitem{WW09} F. -Y. Wang and B. Wu, \emph{Quasi-Regular Dirichlet Forms on Free Riemannian Path and Loop Spaces,} Inf. Dimen. Anal. Quantum Probab.
and Rel. Topics 2(2009) 251--267.

\bibitem{WW16}F.- Y. Wang and B. Wu, \emph{Pointwise Characterizations of Curvature and Second
Fundamental Form on Riemannian Manifolds,} arXiv:1605.02447.

\bibitem{W1} B. Wu, \emph{Characterizations of the upper bound of Bakry-Emery curvature,} arXiv:1612.03714.


\end{thebibliography}
\end{document}